\newtheorem{thm}{Theorem}[section]
\newtheorem{lem}[thm]{Lemma}
\theoremstyle{remark}
\newtheorem{rem}[thm]{Remark}
\theoremstyle{definition}
\newtheorem{dfn}[thm]{Definition}
\numberwithin{equation}{section}
\numberwithin{thm}{section}
\begin{document}



\subjclass{42B20, 42B30}
\keywords{multilinear strongly singular integral; commutators;
non-homogeneous metric measure spaces; RBMO$(\mu)$.}



\title[Commutators of multilinear strongly singular integrals]{Commutators of multilinear strongly singular integrals on non-homogeneous metric measure spaces}


%

\author{Hailian Wang and Rulong Xie$^{\ast}$}
\address{Hailian Wang, Department of Mathematics, Chaohu University,
Hefei 238000, China} \email{hailianmath@163.com}
\address{Rulong Xie ($\ast$ Corresponding author), 1. Department
of Mathematics, Chaohu University, Hefei 238000,
China; 2. School of Mathematical Sciences, University of
Science and Technology of China, Hefei 230026, China}\email{xierl@mail.ustc.edu.cn}


%

%

\begin{abstract}
Let $(X,d,\mu)$ denotes non-homogeneous metric measure space satisfying geometrically doubling and the upper doubling measure condition. In this paper, the boundedness in Lebesgue spaces for two kinds of commutators, which are iterated commutators and commutators in summation form, generated by multilinear strongly singular integral operators with RBMO$(\mu)$ function on non-homogeneous metric measure spaces $(X,d,\mu)$ is obtained.
\end{abstract}

\maketitle

\section{Introduction}

It is well known that Hyt\"{o}nen \cite{H} introduced non-homogeneous metric measure spaces $(X,d,\mu)$,
 satisfying geometrically doubling and the upper doubling measure condition (see
Definitions \ref{def1.1} and \ref{def1.2}). The non-homogeneous metric measure space includes both the
homogeneous spaces and non-doubling measure spaces as special cases. We recall that the
homogeneous space is a space equipped with a
non-negative doubling  measure $\mu$, where $\mu$ is said to satisfy the
doubling condition if there exists a constant $C>0$ such that
$\mu(B(x,2r))\leq C\mu(B(x,r))$ for all $x\in \rm{supp} \mu$ and
$r>0$. The non-doubling measure space is a space equipped with a non-negative
measure $\mu$, where $\mu$ only needs to satisfy the polynomial growth condition,
i.e., for all $x \in R^{n}$ and $r>0$, there exist a constant $C_0>0$ and
$k\in(0, n]$ such that
\begin{equation}\label{1.1}
\mu(B(x, r)) \leq C_{0}r^{k},
\end{equation}
where $B(x, r)= \{y \in R^{n}: |y-x| < r\}$. There are many important results in non-doubling measure spaces (see \cite{HMY1}, \cite{HMY2}, \cite{NTV}, \cite{T1}, \cite{T2}, \cite{XS}, \cite{X1} and the references therein). And
the analysis on non-doubling measures has important applications in
solving the long-standing open Painlev$\acute{e}$'s problem (see
\cite{T1}).

Next let us recall some results on non-homogeneous metric measure spaces. Hyt\"{o}nen et al. \cite{HYY}
and Bui and Duong \cite{BD1} independently introduced the atomic Hardy
space $H^{1}(\mu)$ and proved that the dual space of $H^{1}(\mu)$ is
RBMO$(\mu)$. In \cite{BD1}, the authors also proved that
Calder\'{o}n-Zygmund operator and commutators generated by
Calder\'{o}n-Zygmund operators and RBMO function are bounded in
$L^{p}(\mu)(1<p<+\infty)$. Recently, some equivalent
characterizations are established by Liu et al. \cite{LYY} for the
boundedness of Calder\'{o}n-Zygmund operators on $L^{p}(\mu)(1<p<+\infty)$. In \cite{FYY1}, Fu et al. established boundedness of
multilinear commutators of Calder\'{o}n-Zygmund operators on Orlicz
spaces on non-homogeneous spaces. The more results on non-homogeneous metric measure spaces have been obtained (see
\cite{CL}, \cite{FYY2}, \cite{GXX}, \cite{HM}-\cite{LY3}, \cite{LMY}, \cite{YYH} and the references therein).

   The theory on multilinear singular integral
operators has been considered by some researchers. In \cite{CM}, Coifman
and Meyer firstly established the theory of bilinear
Calder\'{o}n-Zygmund operators. Later, Grafakos and Torres \cite{GT1,GT2}
established the boundedness of multilinear singular integral on the
product Lebesgue spaces and Hardy spaces. The boundedness of
multilinear singular integrals and commutators on non-doubling
measure spaces $(R^{n},\mu)$ was established by Xu \cite{X1,X2}.
Weighted norm inequalities for multilinear Calder\'{o}n-Zygmund
operators on non-homogeneous metric measure spaces were also
constructed in \cite{HMY3}. Boundedness for commutators of multilinear Calder\'{o}n-Zygmund
operators and multilinear fractional integral
operators on non-homogeneous metric measure spaces was also
established in \cite{XGZ,GXX}. Zheng and Tao \cite{ZT} established the boundedness for iterated commutators of multilinear singular integrals of Dini's type on non-homogeneous metric measure spaces.

The introduction of strongly singular integral operator is motivated by a class of multiplier
operators whose symbol is given by $e^{i|\xi|\alpha} /|\xi|\beta$ away from the origin, where $0<\alpha<1$ and
$\beta>0$. Fefferman and Stein \cite{FS} enlarged the multiplier operators onto a class of convolution
operators. Coifman \cite{C} also considered a related class of operators for $n = 1$.
The strongly singular non-convolution operators were introduced and researched by Alvarez and Milman \cite{AM1,AM2},
whose properties are similar to those of Calder\'{o}n-Zygmund operators, but the kernel is more
singular near the diagonal than those of the standard case. Furthermore, Lin and Lu (\cite{L}-\cite{LL3}) obtained the boundedness for strongly
singular integral and its commutators on Lebesgue spaces, Morrey spaces and Hardy spaces. Recently, we \cite{WX} established the boundedness for multilinear strongly singular integral operators on non-homogeneous metric measure spaces.

In this paper, two kinds of commutators generated by multilinear strongly singular integral operators
and RBMO$(\mu)$ function on non-homogeneous metric measure spaces are introduced. And we will  prove that they are bounded in $m$-multiple Lebesgue
spaces, provided that multilinear
strongly singular integral is bounded from $m$-multiple $L^1(\mu)\times \ldots
\times L^1(\mu)$ to $L^{1/m,\infty}(\mu)$, where $L^{p}(\mu)$ and
$L^{p,\infty}(\mu)$ denote the Lebesgue space and weak Lebesgue
space respectively.

Before stating the main results, let us first recall some
notations and definitions.
\begin{dfn}\cite{H}\label{def1.1}
A metric space $(X,d)$
is called geometrically doubling if there exists some $N_{0}\in
\mathbf{N}$ such that, for any ball $B(x,r)\subset X$, there exists
a finite ball covering $\{B(x_i,r/2)\}_i$ of $B(x,r)$ such that the
cardinality of this covering is at most $N_0$.
 \end{dfn}

\begin{dfn}\cite{H}\label{def1.2}
A metric measure space
$(X,d,\mu)$ is said to be upper doubling if $\mu$ is a Borel measure
on $X$ and there exists a dominating function $\lambda : X
\times(0,+\infty) \rightarrow (0,+\infty)$ and a constant
$C_{\lambda} >0$ such that for each $x\in X, r \longmapsto \lambda(x,r)$ is
non-decreasing, and for all $x\in X, r >0$,
 \begin{equation}\label{1.2}
 \mu(B(x, r))\leq \lambda(x, r)\leq C_{\lambda}\lambda(x, r/2).
 \end{equation}
\end{dfn}

\begin{rem}(i)\ A space of homogeneous type is a special case of upper
doubling spaces, where one can take the dominating function
$\lambda(x, r)= \mu(B(x,r))$. On the other hand, a metric space
$(X,d,\mu)$ satisfying the polynomial growth condition (\ref{1.1}) (in
particular, $(X,d,\mu)=(R^n, |\cdot|, \mu)$ with $\mu$
satisfying (\ref{1.1}) for some $k\in (0, n])$ is also an upper doubling
measure space if we take $\lambda(x, r)=Cr^{k}$.

(ii)\ Let$(X,d,\mu)$ be an upper doubling space and $\lambda$ be a
dominating function on $X \times(0,+\infty)$ as in Definition 1.2. In
\cite{HYY}, it was shown that there exists another dominating function
$\tilde{\lambda}$ such that for all $x, y \in X$ with $d(x, y)\leq
r$,
\begin{equation}\label{1.3}
\tilde{\lambda}(x, r)\leq \tilde{C}\tilde{\lambda}(y, r).
\end{equation}
 Thus, we assume that $\lambda$ always satisfies (\ref{1.3}) in this paper.
\end{rem}

\begin{dfn}\cite{BD1}
 Let $1<\alpha,\beta<+\infty$. A ball $B\subset X$ is called $(\alpha,
\beta)$-doubling if $\mu(\alpha B)\leq \beta \mu (B)$.
\end{dfn}

\begin{rem}
As pointed in Lemma 2.3 of \cite{BD1},  there exist plenty of doubling
balls with small radii and with large radii. In the rest of this
paper, unless $\alpha$ and $\beta$ are specified otherwise, by an
$(\alpha,\beta)$ doubling ball we mean a $(6,\beta_{0})$-doubling
with a fixed number $\beta_0 >\max\{C_{\lambda}^{3\log_{2}6},
6^{n}\}$, where $n=\log_{2}N_{0}$ is viewed as a geometric dimension
of the space.
\end{rem}

\begin{dfn}\label{def1.6}
 A kernel $K(\cdot,\cdots,\cdot)\in L_{loc}^{1}((X)^{m+1}\backslash\{(x,y_{1}\cdots,y_{j},\cdots,y_{m}):x=y_{1}=\cdots=y_{j}=\cdots=y_{m}\})$
 is called an $m$-linear strongly singular integral kernel if it satisfies:

(i)\begin{equation}\label{1.4}
 |K(x,y_{1},\cdots,y_{j},\cdots, y_{m})|\leq
C\biggl[\sum_{j=1}^{m}\lambda(x,d(x,y_{j}))\biggr]^{-m}
 \end{equation}
for all $(x,y_{1}\cdots,y_{j},\cdots,y_{m})\in (X)^{m+1}$ with
$x\neq y_{j}$ for some $j$.

 (ii) There exists $0<\alpha<1$ and $0<\delta\leq 1$ such that

\begin{equation}\label{1.5}
\begin{split}
&|K(x,y_{1},\cdots,y_{j},\cdots,y_{m})-K(x',y_{1},\cdots,y_{j},\cdots,y_{m})|\\
\leq
&\frac{Cd(x,x')^{\delta}}{\biggl[\sum\limits_{j=1}^{m}d(x,y_{j})\biggr]^{\delta/\alpha}\biggl[\sum\limits_{j=1}^{m}\lambda(x,d(x,y_{j}))\biggr]^{m}},
\end{split}
\end{equation}
provided that $Cd(x,x')^{\alpha}\leq \max\limits_{1\leq j\leq m}d(x,y_{j})$ and
for each $j$,
 \begin{equation}\label{1.6}
 \begin{split}
&|K(x,y_{1},\cdots,y_{j},\cdots,y_{m})-K(x,y_{1},\cdots,y'_{j},\cdots,y_{m})|\\
 &\leq\frac{Cd(y_{j},y'_{j})^{\delta}}{\biggl[\sum\limits_{j=1}^{m}d(x,y_{j})\biggr]^{\delta/\alpha}
 \biggl[\sum\limits_{j=1}^{m}\lambda(x,d(x,y_{j}))\biggr]^{m}},
\end{split}
\end{equation}
provided that $Cd(y_{j},y'_{j})^{\alpha}\leq \max\limits_{1\leq j\leq
m}d(x,y_{j})$, where $C$ is a positive constant.

A multilinear operator $T$ is called a multilinear
strongly singular integral operator with the above
kernel $K$ satisfying $(\ref{1.4})$, $(\ref{1.5})$ and $(\ref{1.6})$ if, for $f_{1},\cdots
f_{m}$ are $L^{\infty}$ functions with bounded support and $x\notin
\bigcap_{j=1}^{m}\text{supp}f_{j}$,
 \begin{equation}\label{1.7}
 T(f_{1},\cdots f_{m})(x)=\int_{X^{m}}K(x,y_{1},\cdots y_{m})f_{1}(y_{1})\cdots
 f_{m}(y_{m})d\mu(y_{1})\cdots d\mu(y_{m}).
\end{equation}
\end{dfn}

\begin{dfn}\cite{BD1} For any two balls $B\subset Q$, let $N_{B,Q}$ be the smallest integer
satisfying $6^{N_{B,Q}}r_{B}\geq r_Q$, denote
\begin{equation}
K_{B,Q} = 1+\sum_{k=1}^{N_{B,Q}}
\frac{\mu(6^{k}B)}{\lambda(x_B,6^{k}r_{B})},
\end{equation}
where $x_B$ and $r_{B}$ denote center and radius of ball $B$ respectively.
\end{dfn}

\begin{dfn}
\cite{BD1} Let $\rho>1$ be some
fixed constant. A function $b\in L_{loc}^{1}(\mu)$ is said to belong
to RBMO$(\mu)$ if there exists a constant $C
>0$ such that for any ball $B$,
\begin{equation}\label{1.9}
\frac{1}{\mu(\rho B)}\int_{B}|b(x)-m_{\widetilde{B}}(b)|d\mu(x)\leq C,
\end{equation}
and for any two doubling balls $B\subset Q$,
 \begin{equation}\label{1.10}
  |m_{B}(b)-m_{Q}(b)|\leq CK_{B,Q},
\end{equation}
where $\widetilde{B}$ is the smallest $(\alpha,\beta)$-doubling ball of
the form $6^{k}B$ with $k\in {\mathbf{N}}\bigcup\{0\}$, and
$m_{\widetilde{B}}(b)$ is the mean value of $b$ on $\widetilde{B}$,
namely,
$$m_{\widetilde{B}}(b)=\frac{1}{\mu(\widetilde{B})}\int_{\widetilde{B}}b(x)d\mu(x).$$
The minimal constant $C$ appearing in (\ref{1.9}) and (\ref{1.10}) is defined to be
the RBMO$(\mu)$ norm of $b$ and denoted by $||b||_{\ast}$.
\end{dfn}

Let us introduce the definitions of two kinds of commutators, one can refer to the references \cite{PPTT,PT}.

\begin{dfn}
Iterated commutators generated
by multilinear strongly singular integral $T$ with $\vec{b}=(b_{1},\cdots,b_{m})\in
\text{RBMO}(\mu)^{m}$ is defined by
\begin{equation}
T_{\prod \vec{b}}(f_1,\cdots,f_m)= [b_1, [b_2,... [b_{m-1}, [b_m, T]_{m}]_{m-1}\cdots]_{2}]_{1}(f_1,\cdots,f_m).
\end{equation}
\end{dfn}

In particular, when $m=2$, we obtain
\begin{equation}\label{1.12}
 \begin{split}
[b_{1},b_{2},T](f_{1},f_{2})(x)=&b_{1}(x)b_{2}(x)T(f_{1},f_{2})(x)-b_{1}(x)T(f_{1},b_{2}f_{2})(x)\\
&-b_{2}(x)T(b_{1}f_{1},f_{2})(x)+T(b_{1}f_{1},b_{2}f_{2})(x).
\end{split}
\end{equation}
Also, we define $[b_{1},T]$ and $[b_{2},T]$ as follows respectively.
\begin{equation}[b_{1},T](f_{1},f_{2})(x)=b_{1}(x)T(f_{1},f_{2})(x)-T(b_{1}f_{1},f_{2})(x),\end{equation}
\begin{equation}[b_{2},T](f_{1},f_{2})(x)=b_{2}(x)T(f_{1},f_{2})(x)-T(f_{1},b_{2}f_{2})(x).\end{equation}

\begin{dfn}
The commutators in the summation form generated
by multilinear strongly singular integral $T$ with $\vec{b}=(b_{1},\cdots,b_{m})\in
\text{RBMO}(\mu)^{m}$ is defined by
\begin{equation}\label{1.51}
T_{\sum \vec{b}}(f_{1},\cdots,f_{m})=\sum_{j=1}^{m}T_{b_{j}}^{j}(f_{1},\cdots,f_{m}),
\end{equation}
where
\begin{equation*}
T_{b_{j}}^{j}(f_{1},\cdots,f_{m})=b_{j}T(f_{1},\cdots,f_{j},\cdots,f_{m})-T(f_{1},\cdots,b_{j}f_{j},\cdots,f_{m}).
\end{equation*}

\end{dfn}

For the sake of simplicity and without loss of generality, we only
consider the case of $m=2$ in this paper. Let us state the main
results as follows.

\begin{thm}\label{main-thm}
 Suppose that $\mu$ is a Radon measure with $||\mu||=\infty$. Let
$[b_{1},b_{2},T]$ be defined by (\ref{1.12}). Let $1<p_{1},p_{2},q<+\infty$,
$f_{1}\in L^{p_{1}}(\mu)$, $f_{2}\in L^{p_{2}}(\mu)$, $b_{1}\in
\text{RBMO}(\mu)$ and $b_{2}\in \text{RBMO}(\mu)$. If $T$ is bounded from
$L^{1}(\mu)\times L^{1}(\mu)$ to $ L^{1/2,\infty}(\mu)$, then there
exists a constant $C>0$ such that
\begin{equation}
||[b_{1},b_{2},T](f_{1},f_{2})||_{L^{q}(\mu)}\leq C
||f_{1}||_{L^{p_{1}}(\mu)}||f_{2}||_{L^{p_{2}}(\mu)},
\end{equation}
where $\frac{1}{q}=\frac{1}{p_{1}}+\frac{1}{p_{2}}$.
\end{thm}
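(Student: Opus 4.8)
The plan is to follow the now-standard sharp maximal function approach adapted to non-homogeneous metric measure spaces. First I would recall the Fefferman--Stein type inequality on $(X,d,\mu)$: for a suitable variant $M^{\sharp}$ of the sharp maximal operator (the one associated to RBMO$(\mu)$, involving averages over doubling balls and the $K_{B,Q}$ coefficients) and the (non-centered) Hardy--Littlewood maximal operator $M$, one has $\|g\|_{L^{q}(\mu)}\lesssim \|M^{\sharp}g\|_{L^{q}(\mu)}$ whenever $g\in L^{q_0}(\mu)$ for some $q_0$, together with the $L^{p}(\mu)$-boundedness of $M$. Since we already know from \cite{WX} that $T$ itself is bounded on the product Lebesgue spaces (being bounded from $L^1\times L^1$ to $L^{1/2,\infty}$ plus an interpolation/multilinear extrapolation argument), the iterated commutator $[b_1,b_2,T]$ maps into some $L^{q_0}(\mu)$, so the Fefferman--Stein reduction is legitimate. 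Thus the whole theorem reduces to a pointwise estimate of the form
\begin{equation*}
M^{\sharp}\big([b_1,b_2,T](f_1,f_2)\big)(x)\lesssim \|b_1\|_{\ast}\|b_2\|_{\ast}\,\Big(\mathcal{M}_{r}(f_1,f_2)(x)+\text{lower-order terms}\Big),
\end{equation*}
where $\mathcal{M}_r$ is a product of maximal operators (e.g. $M_{r}f_1\cdot M_{r}f_2$ for some $r>1$ slightly bigger than $1$), and the lower-order terms involve $M$ applied to the single commutators $[b_1,T](f_1,f_2)$ and $[b_2,T](f_1,f_2)$ and to $T(f_1,f_2)$ itself.

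The core computation is the pointwise sharp-maximal estimate. Here I would exploit the algebraic identity \eqref{1.12}: writing $b_i(y_i)-m_{\widetilde B}(b_i)=(b_i(y_i)-m_{\widetilde B}(b_i))$ and inserting the constants $\lambda_i:=m_{\widetilde B}(b_i)$, one decomposes
\begin{equation*}
[b_1,b_2,T](f_1,f_2)=\big(b_1-\lambda_1\big)\big(b_2-\lambda_2\big)T(f_1,f_2)-\big(b_1-\lambda_1\big)T\big(f_1,(b_2-\lambda_2)f_2\big)-\big(b_2-\lambda_2\big)T\big((b_1-\lambda_1)f_1,f_2\big)+T\big((b_1-\lambda_1)f_1,(b_2-\lambda_2)f_2\big),
\end{equation*}
and then for a fixed ball $B\ni x$ splits each $f_i=f_i\chi_{\frac65 B}+f_i\chi_{X\setminus\frac65 B}=:f_i^0+f_i^\infty$. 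This produces a finite sum of terms, each to be controlled by the mean oscillation over $B$ (for \eqref{1.9}) or by $|h_{B}-h_{Q}|$ for doubling $B\subset Q$ (for \eqref{1.10}). The terms where at least one argument is the \emph{global} part $f_i^\infty$ are handled using the smoothness estimates \eqref{1.5}--\eqref{1.6} of the kernel: the strongly-singular exponent $\alpha$ forces the familiar two-regime argument (comparing $d(x,x')^{\alpha}$ against $\max_j d(x,y_j)$), where on the near regime one uses the size bound \eqref{1.4} directly and the geometric-doubling/upper-doubling properties to sum the resulting series, producing the $K_{B,Q}$ factors that RBMO tolerates; the factors $(b_i-\lambda_i)$ appearing outside $T$ are absorbed using the John--Nirenberg-type estimate for RBMO$(\mu)$ from \cite{BD1}, which gives $\frac{1}{\mu(\rho B)}\int_B |b_i-m_{\widetilde B}(b_i)|^{s}\,d\mu\lesssim \|b_i\|_{\ast}^{s}$ for all $s\ge 1$, and Hölder's inequality. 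The terms with $(b_i-\lambda_i)f_i^0$ inside $T$ are estimated by the weak $(1,1)\times(1,1)$ bound of $T$ (equivalently its $L^{r}\times L^{r}\to L^{r/2}$ bound after interpolation) combined again with John--Nirenberg; and the purely local term $T(f_1^0,f_2^0)$ reduces to $T(f_1,f_2)$ estimates already available, contributing to the ``lower-order'' piece $M(T(f_1,f_2))$.

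Once the pointwise estimate is established, I would finish by invoking the $L^{q}(\mu)$ Fefferman--Stein inequality, then the known $L^{p_1}\times L^{p_2}\to L^{q}$ boundedness of $T$ and of the single commutators $[b_1,T]$ and $[b_2,T]$ (which are the $m=1$-type results, obtainable by the same but shorter argument, and which should be recorded as a lemma before this proof), plus the $L^{p}(\mu)$-boundedness of $M$ and $M_r$ for $1<r<\min\{p_1,p_2\}$, and Hölder's inequality $\frac1q=\frac1{p_1}+\frac1{p_2}$, to conclude
\begin{equation*}
\|[b_1,b_2,T](f_1,f_2)\|_{L^q(\mu)}\lesssim \|b_1\|_{\ast}\|b_2\|_{\ast}\|f_1\|_{L^{p_1}(\mu)}\|f_2\|_{L^{p_2}(\mu)}.
\end{equation*}
The main obstacle I anticipate is the global-part estimate under the strongly singular kernel conditions: because the smoothness bound \eqref{1.5}--\eqref{1.6} only holds in the regime $Cd(x,x')^{\alpha}\le\max_j d(x,y_j)$, one must carefully split the annuli $6^{k}B\setminus 6^{k-1}B$ into those close to and far from the ``critical scale'' $r_B^{1/\alpha}$-ish threshold, estimate the close annuli by the crude size bound \eqref{1.4} (paying a harmless power of $K_{B,Q}$), and sum the geometric series in the far annuli using \eqref{1.2} and \eqref{1.3}; keeping the dependence on $K_{B,Q}$ linear (so that \eqref{1.10} is respected) while simultaneously tracking the two RBMO factors is the delicate bookkeeping step.
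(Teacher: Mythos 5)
Your overall strategy matches the paper's: reduce to a pointwise $M^{\sharp}_{\delta}$ estimate via the Fefferman--Stein inequality (Lemma~\ref{lem2.1}), decompose $[b_1,b_2,T]$ using the constants $m_{\widetilde B}(b_i)$, split $f_i$ into local and global parts, handle global parts via kernel smoothness, local parts via Kolmogorov's inequality and the weak endpoint bound, absorb the $(b_i-\lambda_i)$ factors via the John--Nirenberg estimate for RBMO$(\mu)$ (Lemma~\ref{lem2.2}), and finish with the $L^{p_1}\times L^{p_2}\to L^q$ boundedness of $T$, of the single commutators, and of the maximal operators. All of that is the same as the paper, and your recognition that the restricted-domain smoothness condition $Cd(x,x')^{\alpha}\le\max_j d(x,y_j)$ is the genuine obstacle is on target.

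The gap is in the fix you propose for that obstacle. You suggest splitting the annuli $6^{k}B\setminus 6^{k-1}B$ into ``near'' ones (below the critical scale, handled with the size bound \eqref{1.4} at the cost of a power of $K_{B,Q}$) and ``far'' ones (handled with the smoothness bound). This works only for the regularity half of the sharp maximal estimate, i.e.\ the bound on $|h_B-h_Q|$, which tolerates $K_{B,Q}$ factors by \eqref{1.10}. It does \emph{not} work for the oscillation half \eqref{1.9}, where no $K_{B,Q}$ appears. Concretely: if $l(B)<1$ is small, the ``near'' range where the smoothness estimate fails consists of the annuli with $l(B)\lesssim d(z,y_2)\lesssim l(B)^{\alpha}$, and the number of such annuli is comparable to $(1-\alpha)\log_{1/6} l(B)$, which is unbounded as $l(B)\to 0$. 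Estimating each near annulus by the size bound alone produces a sum of order $k_{\max}^2$, which cannot be absorbed into a uniform sharp-maximal bound. So the annulus-splitting plan, as written, fails.

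The paper's resolution (Lemma~\ref{lem2.5}, Case 2) is cleaner: when $l(B)<1$, instead of splitting $f_i$ at $\tfrac{6}{5}B$, one splits at a concentric dilated ball $B_0$ with $l(B_0)=l(B)^{\alpha}$ (so $B\subset B_0$). Then for $z,y\in B$ and $y_2\notin\tfrac{6}{5}B_0$ one automatically has $d(z,y_2)\gtrsim l(B)^{\alpha}\ge d(z,y)^{\alpha}$, so the smoothness conditions \eqref{1.5}--\eqref{1.6} hold on the entire global region with no ``near'' annuli at all. The price is that the sharp-maximal normalization becomes $\mu(6B_0)$ rather than $\mu(6B)$, and one must check (as in the reduction to \eqref{2.19} via the argument of Theorem 9.1 in \cite{T2}) that this modified normalization still yields the Fefferman--Stein conclusion; the auxiliary balls $Q_0$ and $\tilde B_0$ are introduced analogously for the $|h_B-h_Q|$ side, and the $K_{B,Q}$ bookkeeping is carried out among $B\subset\tilde B\subset\tilde B_0\subset Q\subset Q_0$ using Lemma~\ref{lem2.10}. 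You would need to incorporate this $\alpha$-dilation device and the two-case structure ($l(B)\ge 1$ versus $l(B)<1$) for the proof to close.
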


\begin{thm}\label{main-thm2}
 Suppose that $\mu$ is a Radon measure with $||\mu||=\infty$. Let
$T_{\sum \vec{b}}$ be defined by (\ref{1.51}) with $m=2$. Let $1<p_{1},p_{2},q<+\infty$,
$f_{1}\in L^{p_{1}}(\mu)$, $f_{2}\in L^{p_{2}}(\mu)$, $b_{1}\in
RBMO(\mu)$ and $b_{2}\in RBMO(\mu)$. If $T$ is bounded from
$L^{1}(\mu)\times L^{1}(\mu)$ to $ L^{1/2,\infty}(\mu)$, then there
exists a constant $C>0$ such that
\begin{equation}
||T_{\sum \vec{b}}(f_{1},f_{2})||_{L^{q}(\mu)}\leq C
||f_{1}||_{L^{p_{1}}(\mu)}||f_{2}||_{L^{p_{2}}(\mu)},
\end{equation}
where $\frac{1}{q}=\frac{1}{p_{1}}+\frac{1}{p_{2}}$.
\end{thm}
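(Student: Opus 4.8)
The plan is to reduce Theorem~\ref{main-thm2} to Theorem~\ref{main-thm} together with the $L^p$-boundedness of the one-variable commutators $[b_j,T]$ already available from the companion paper \cite{WX}. Writing $T_{\sum\vec b}(f_1,f_2)=T_{b_1}^1(f_1,f_2)+T_{b_2}^2(f_1,f_2)$ with $T_{b_1}^1(f_1,f_2)=b_1 T(f_1,f_2)-T(b_1 f_1,f_2)$ and $T_{b_2}^2$ analogous, it suffices by the triangle inequality in $L^q(\mu)$ to bound each summand separately, and by symmetry it is enough to handle $T_{b_1}^1$. So the real content is: under the hypothesis that $T$ maps $L^1(\mu)\times L^1(\mu)\to L^{1/2,\infty}(\mu)$ and the kernel satisfies \eqref{1.4}--\eqref{1.6}, the commutator $[b_1,T]$ with a single RBMO$(\mu)$ symbol is bounded $L^{p_1}(\mu)\times L^{p_2}(\mu)\to L^q(\mu)$, $\tfrac1q=\tfrac1{p_1}+\tfrac1{p_2}$.

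First I would set up the proof of this single-symbol estimate by the now-standard sharp-maximal-function method on non-homogeneous spaces: one estimates the Fefferman--Stein-type operator $M^{\sharp}$ (the RBMO variant used in \cite{BD1}, via oscillations over doubling balls and the coefficients $K_{B,Q}$) applied to $[b_1,T](f_1,f_2)$, and then invokes the Lebesgue-space estimate $\|g\|_{L^q(\mu)}\lesssim\|M^\sharp g\|_{L^q(\mu)}$ valid for functions with vanishing "mean at infinity" (here $\|\mu\|=\infty$ is used, exactly as in Theorem~\ref{main-thm}). To control $M^\sharp([b_1,T](f_1,f_2))(x)$ at a point $x$ in a ball $B$, I would, as usual, subtract a constant $\lambda_B$ from $b_1$, split $b_1-m_{\widetilde B}(b_1)=(b_1-m_{\widetilde B}(b_1))\chi_{\frac65 B}+(b_1-m_{\widetilde B}(b_1))\chi_{X\setminus\frac65 B}$, and correspondingly decompose each $f_i=f_i\chi_{\frac65 B}+f_i\chi_{X\setminus\frac65 B}$. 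This produces a bounded number of terms: (a) a term where $b_1$ is localized, handled by the weak $(1,1)\times(1,1)$ hypothesis on $T$ through Kolmogorov's inequality plus the John--Nirenberg estimate for RBMO$(\mu)$; (b) terms with one or two $f_i$'s far away, handled by the kernel size condition \eqref{1.4} and the smoothness conditions \eqref{1.5}, using the strongly-singular scaling $d(x,x')^\alpha\le\max_j d(x,y_j)$ and summing the resulting $K_{B,Q}$-type series; (c) the "difference of averages over two doubling balls" term, controlled by \eqref{1.10} and the growth of $K_{B,Q}$. All of these are the same estimates that appear in the proof of Theorem~\ref{main-thm}, but with one RBMO factor instead of two, hence strictly simpler; in particular one never needs the iterated/product structure of \eqref{1.12}.

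Having the bound $M^{\sharp}\big([b_1,T](f_1,f_2)\big)(x)\lesssim \|b_1\|_*\big(M_{r}(T(f_1,f_2))(x)+M_{r,\,(9/8)}(f_1)(x)\,M f_2(x)+Mf_1(x)\,M_{r,(9/8)}(f_2)(x)+\text{(strongly singular tails)}\big)$ for a suitable $r>1$ close to $1$, I would then take $L^q(\mu)$ norms: the first term is controlled by the $L^{p_1}\times L^{p_2}\to L^q$ boundedness of $T$ itself (which follows from the weak hypothesis by the interpolation/multilinear machinery already invoked in \cite{WX}), the maximal-operator products are handled by the boundedness of the (non-centered, power-type) Hardy--Littlewood maximal operator $M_{r,\eta}$ on $L^{p_i}(\mu)$ for $r<p_i$ together with Hölder's inequality $\tfrac1q=\tfrac1{p_1}+\tfrac1{p_2}$, and the tail terms are summable by the strongly-singular exponents. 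Combining the two summands $T_{b_1}^1$ and $T_{b_2}^2$ gives the claimed inequality with $C=C(\|b_1\|_*+\|b_2\|_*)$. The main obstacle I anticipate is purely technical: verifying that the strongly-singular smoothness conditions \eqref{1.5}--\eqref{1.6}, whose denominators carry the extra factor $[\sum_j d(x,y_j)]^{\delta/\alpha}$ and which are only valid in the restricted range $d(x,x')^\alpha\lesssim\max_j d(x,y_j)$, still allow the geometric summation of the $\mu(6^kB)/\lambda(x_B,6^kr_B)$ series when the ball $B$ is small (radius $<1$) versus large — this is exactly the place where the analysis of strongly singular kernels departs from the Calderón--Zygmund case, and it must be carried out carefully for the near-diagonal balls, but it is already dealt with in \cite{WX} and needs only to be reproduced with the commutator weight $b_1-\lambda_B$ inserted.
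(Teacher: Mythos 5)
Your proposal is correct and follows essentially the paper's intended route: the paper (Remark 1.15) refers the proof of Theorem \ref{main-thm2} to the argument of Theorem \ref{main-thm}, and the needed pointwise sharp-maximal estimates for the single commutators $[b_1,T]$, $[b_2,T]$ are already recorded as inequalities (\ref{2.12}), (\ref{2.13}), (\ref{2.15}), (\ref{2.16}) in Lemma \ref{lem2.5}, from which the $L^q$-bound follows via Lemma \ref{lem2.1}, Lemma \ref{lem2.4}, and the boundedness of the maximal operators $M_{r,(\rho)}$. One small correction: \cite{WX} gives only the bound for $T$ itself (Lemma \ref{lem2.4}), not for the single-symbol commutators $[b_j,T]$, which must be (and are, both in the paper and in the body of your sketch) derived here by the sharp-maximal method, with the case split $l(B)\geq 1$ versus $0<l(B)<1$ handling the strongly-singular scaling exactly as you anticipate.
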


\begin{rem} For $||\mu||<\infty$, by Lemma \ref{lem2.1} in Section 2, Theorem \ref{main-thm} also holds if one assumes that $\int_{X}G(f_1,f_2)(x)d\mu(x)=0$ with the operator $G$ be replaced by $T$, $[b_1,T]$,
$[b_2,T]$ and $[b_1,b_2,T]$. Also Theorem \ref{main-thm2} holds if one assumes that $\int_{X}G(f_1,f_2)(x)d\mu(x)=0$ with the operator $G$ be replaced by $T$, $[b_1,T]$ and
$[b_2,T]$.
\end{rem}

\begin{rem} Since the proof of Theorem \ref{main-thm2} is similar with that of  Theorem  \ref{main-thm}, we only give the proof of Theorem \ref{main-thm} in this paper.
\end{rem}

Throughout the paper, $\chi_{E}$
denotes the characteristic function of set $E$.  And $p'$ is the conjugate
index of $p$, namely, $\frac{1}{p}+\frac{1}{p'}=1$. $C$ denotes a positive constant independent of the main parameters involved, but
it may be different in different places.

\section{Proof of Main Result}

To prove Theorem  \ref{main-thm}, we firstly give some notations and
lemmas.

Let $f \in L_{loc}^{1}(\mu)$, the sharp maximal operator is defined
by
 \begin{equation*}
M^{\sharp}f(x)=\sup _{B\ni
x}\frac{1}{\mu(6B)}\int_{B}|f(y)-m_{\widetilde{B}}(f)|d\mu(y)
+\sup_{(B,Q)\in\Delta_{x}}\frac{|m_{B}(f)-m_{Q}(f)|}{K_{B,Q}},
\end{equation*}
where $\Delta_{x}:=\{(B,Q):x\in B\subset Q\ \text{and}\ B,\ Q\ \text{are
doubling balls}\}$ and the non-centered doubling maximal operator is
defined by
$$Nf(x)=\sup_{B\ni x,\atop B \
\text{doubling}}\frac{1}{\mu(B)}\int_{B}|f(y)|d\mu(y).$$ For any $0<\delta
< 1$, we also define that
 \begin{equation*}
M^{\sharp}_{\delta}f(x)=\{M^{\sharp}(|f|^{\delta})(x)\}^{1/\delta}
\end{equation*}
and
\begin{equation*}
N_{\delta}f(x)=\{N(|f|^{\delta})(x)\}^{1/\delta}.
\end{equation*}
We can obtain that for any $f\in L^{1}_{loc}(\mu)$,
\begin{equation*}
|f(x)|\leq N_{\delta}f(x)
\end{equation*} for $\mu-a.e. \ x \in X$.

Let $\rho>1$, $p\in (1,\infty)$ and $r\in (1,p)$, the non-centered
maximal operator $M_{r,(\rho)}f$ is defined by
\begin{equation*}
 M_{r,(\rho)}f(x)=\sup_{B\ni x}\biggl\{\frac{1}{\mu(\rho
B)}\int_{B}|f(y)|^{r}d\mu(y)\biggr\}^{1/r}.
\end{equation*}
When $r=1$, we simply write $ M_{1,(\rho)}f(x)$ as $M_{(\rho)}f$. If
$\rho\geq 5$, then the operator $M_{(\rho)}f$ is bounded on
$L^{p}(\mu)$ for $p>1$ and $M_{r,(\rho)}$ is bounded on $L^{p}(\mu)$
for $p>r$ (see \cite{BD1}).

\begin{lem}\cite{X2}\label{lem2.1}
Let $f \in L^{1}_{loc}(\mu)$ with $\int_{X}f(x)d\mu(x)=0$ if
$||\mu||<\infty$. For $1<p<+\infty$ and $0<\delta<1$, if
$\inf(1,N_{\delta}f)\in L^{p}(\mu)$, then there exists a constant
$C>0$ such that
\begin{equation*}
||N_{\delta}(f)||_{L^{p}(\mu)}\leq
C||M_{\delta}^{\sharp}(f)||_{L^{p}(\mu)}.
\end{equation*}
\end{lem}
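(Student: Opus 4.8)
The plan is to view Lemma~\ref{lem2.1} as a Fefferman--Stein inequality on $(X,d,\mu)$ and to prove it by a good-$\lambda$ argument adapted to the non-homogeneous setting, following Tolsa and Bui--Duong \cite{BD1} (the Euclidean case being \cite{X2}). First I would discard the exponent $\delta$. With $g:=|f|^{\delta}\ge 0$ (so $g\in L^{1}_{\mathrm{loc}}(\mu)$) and $q:=p/\delta\in(1,\infty)$, the definitions give $N_{\delta}f=(Ng)^{1/\delta}$ and $M^{\sharp}_{\delta}f=(M^{\sharp}g)^{1/\delta}$, hence $\|N_{\delta}f\|_{L^{p}(\mu)}^{p}=\|Ng\|_{L^{q}(\mu)}^{q}$, $\|M^{\sharp}_{\delta}f\|_{L^{p}(\mu)}^{p}=\|M^{\sharp}g\|_{L^{q}(\mu)}^{q}$, and $\inf(1,N_{\delta}f)\in L^{p}(\mu)$ if and only if $\inf(1,Ng)\in L^{q}(\mu)$. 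So it is enough to show: \emph{if $0\le g\in L^{1}_{\mathrm{loc}}(\mu)$, $q\in(1,\infty)$ and $\inf(1,Ng)\in L^{q}(\mu)$, then $\|Ng\|_{L^{q}(\mu)}\le C\|M^{\sharp}g\|_{L^{q}(\mu)}$}. I would carry this out for $\|\mu\|=\infty$; when $\|\mu\|<\infty$ the a priori hypothesis is automatic and the normalization $\int_{X}f\,d\mu=0$ takes its place at the single point flagged below. Since $g\ge 0$, note that $Ng(x)=\sup\{m_{B}(g):x\in B,\ B\ (6,\beta_{0})\text{-doubling}\}$.

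The core is a good-$\lambda$ inequality: \emph{there exist $A>2$, $\gamma_{0}>0$ and a constant $C_{A}$ such that, for all $\gamma\in(0,\gamma_{0})$ and all $\lambda>0$},
\begin{equation*}
\mu\big(\{Ng>A\lambda,\ M^{\sharp}g\le\gamma\lambda\}\big)\le C_{A}\,\gamma\,\mu\big(\{Ng>\lambda\}\big).
\end{equation*}
Granting this, the layer-cake formula, the change of variables $\lambda\mapsto A\lambda$, and the inclusion $\{Ng>A\lambda\}\subset\{Ng>A\lambda,\ M^{\sharp}g\le\gamma\lambda\}\cup\{M^{\sharp}g>\gamma\lambda\}$ yield $\|Ng\|_{L^{q}}^{q}\le A^{q}C_{A}\gamma\,\|Ng\|_{L^{q}}^{q}+A^{q}\gamma^{-q}\|M^{\sharp}g\|_{L^{q}}^{q}$; fixing $A$ first, then $\gamma$ small enough that $A^{q}C_{A}\gamma\le\tfrac12$, and absorbing the first term proves the lemma. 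The absorption requires $\|Ng\|_{L^{q}}<\infty$; this is the only use of the hypothesis $\inf(1,Ng)\in L^{q}(\mu)$, via a standard truncation (run the good-$\lambda$ inequality with $Ng$ replaced by $\min(Ng,R)$ and $\mu$ restricted to a large ball, obtain a bound uniform in both parameters, and let them increase).

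For the good-$\lambda$ inequality, fix $\lambda>0$ and put $\Omega:=\{Ng>\lambda\}$; from the hypothesis $\mu(\Omega)\le\min(1,\lambda)^{-q}\int_{X}\inf(1,Ng)^{q}\,d\mu<\infty$, and $\Omega$ is open because $Ng$ is lower semicontinuous. Decompose $\Omega$ by the Whitney/Calder\'on--Zygmund--type construction available on $(X,d,\mu)$ (cf. \cite{BD1}): balls $\{B_{i}\}$ of bounded overlap covering $\Omega$, with $r_{B_{i}}\approx\mathrm{dist}(B_{i},X\setminus\Omega)$, with controlled dilates, and each carrying a reference point $z_{i}\notin\Omega$ (so $Ng(z_{i})\le\lambda$) at distance $\approx r_{B_{i}}$. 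Fix $i$, a point $x\in B_{i}$ with $M^{\sharp}g(x)\le\gamma\lambda$, and a $(6,\beta_{0})$-doubling ball $B\ni x$ witnessing $Ng(x)>A\lambda$, i.e. $m_{B}(g)>A\lambda$. If $B$ is large relative to $B_{i}$, then a bounded dilate of $B$ contains a point $w\notin\Omega$; passing to the smallest doubling ball $Q$ containing that dilate --- so that $K_{B,Q}\le C$, by the abundance of doubling balls --- the defining inequality of $M^{\sharp}$ gives $m_{B}(g)\le C\,M^{\sharp}g(x)+m_{Q}(g)\le C\gamma\lambda+Ng(w)\le(C\gamma+1)\lambda<A\lambda$ once $\gamma_{0}$ is small, a contradiction. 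Hence the witnessing ball is contained in a fixed dilate $\eta B_{i}$. Writing $c:=m_{\widetilde{\eta B_{i}}}(g)$, which also satisfies $c\le(C\gamma+1)\lambda$, we have $N\big((g-c)\chi_{\eta B_{i}}\big)(x)\ge m_{B}(g)-c>(A-2)\lambda$; the weak type $(1,1)$ bound for $N$ on $(X,d,\mu)$ (see \cite{BD1,X2}) applied to $(g-c)\chi_{\eta B_{i}}\in L^{1}(\mu)$, together with the first term of the definition of $M^{\sharp}$ at $x$, yields
\begin{equation*}
\mu\big(\{x\in B_{i}:\ Ng(x)>A\lambda,\ M^{\sharp}g(x)\le\gamma\lambda\}\big)\le\frac{C}{(A-2)\lambda}\int_{\eta B_{i}}|g-c|\,d\mu\le\frac{C}{A-2}\,\gamma\,\mu(6\eta B_{i}).
\end{equation*}
Summing over $i$ and using the bounded overlap and measure estimates for the dilated Whitney balls, $\sum_{i}\mu(6\eta B_{i})\le C\mu(\Omega)$, gives the good-$\lambda$ inequality with $C_{A}\le C/(A-2)$.

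The step I expect to be the main obstacle is the geometric bookkeeping around the size dichotomy for $B$: one must arrange it so that, in the ``large'' case, $B$, the escape point $w\in X\setminus\Omega$ and the doubling ball $Q$ joining them have a \emph{uniformly} bounded coefficient $K_{B,Q}$, and so that the Whitney/Calder\'on--Zygmund decomposition genuinely supplies the control $\sum_{i}\mu(6\eta B_{i})\le C\mu(\Omega)$ despite the non-doubling measure (which may force a finer, three-tier splitting of $B$ into ``small'', ``comparable'' and ``large''). This is precisely where the geometric doubling property, the upper doubling inequalities (\ref{1.2})--(\ref{1.3}), and the abundance of $(6,\beta_{0})$-doubling balls at every scale enter together, alongside the routine device of replacing an arbitrary ball by a comparable doubling one with bounded loss. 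Two minor points remain: the truncation legitimizing the absorption, and the case $\|\mu\|<\infty$, where $X\setminus\Omega$ can be too small to be caught by large balls and one instead controls $m_{X}(g)$-type averages using $\int_{X}f\,d\mu=0$. With these in place, the reduction and the layer-cake computation are entirely routine.
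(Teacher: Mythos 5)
This lemma is not proved in the paper at all: it is quoted from Xu \cite{X2} (ultimately Tolsa's $N$--$M^{\sharp}$ inequality for non-doubling measures), so your attempt has to be measured against that standard proof. Your reduction to $g=|f|^{\delta}$, the lower semicontinuity of $Ng$, the finiteness of $\mu(\{Ng>\lambda\})$, the dichotomy argument with the coefficient $K_{B,Q}\le C$ (via Lemma \ref{lem2.10}) forcing the witnessing doubling ball to be small, the use of the first term of $M^{\sharp}$ together with the weak $(1,1)$ bound $N\le\beta_{0}M_{(6)}$, and the layer-cake/absorption step are all sound. The genuine gap is exactly the step you flag and then assume: $\sum_{i}\mu(6\eta B_{i})\le C\mu(\Omega)$ for dilates of \emph{Whitney} balls. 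Whitney balls satisfy $r_{B_i}\approx\mathrm{dist}(B_i,X\setminus\Omega)$, so $6\eta B_{i}$ necessarily leaves $\Omega$, and for a non-doubling $\mu$ the mass just outside $\partial\Omega$ can dwarf $\mu(\Omega)$; bounded overlap only gives $\sum_i\mu(6\eta B_i)\lesssim\mu(\bigcup_i 6\eta B_i)$, which is a neighbourhood of $\Omega$, not $\Omega$. Moreover the factor $\mu(6\eta B_{i})$ cannot be traded for $\mu(\eta B_i)$ or $\mu(B_i)$, because the $\mu(6B)$-normalization is built into the definition of $M^{\sharp}$ and $\eta B_i$ is not a doubling ball. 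So the Whitney route, as written, does not close, and no ``finer three-tier splitting'' of the witnessing ball repairs it: the obstruction sits in the covering, not in the dichotomy.

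The known proof avoids Whitney altogether. For $x$ in $E_{\lambda}:=\{Ng>A\lambda,\ M^{\sharp}g\le\gamma\lambda\}$ one takes the witnessing $(6,\beta_{0})$-doubling ball $B_{x}\ni x$ with $m_{B_{x}}(g)>A\lambda$; the two decisive features are that $B_{x}\subset\{Ng>\lambda\}$ automatically (every $y\in B_{x}$ sees the same average), and that $\mu(6B_{x})\le\beta_{0}\mu(B_{x})$ because the chosen ball itself is doubling. A Besicovitch selection (in the Euclidean setting of \cite{X2}) or a Vitali-type selection compatible with geometric doubling then yields a subfamily with bounded overlap inside $\{Ng>\lambda\}$, so the summation costs only $\mu(\{Ng>\lambda\})$, and the localized weak-$(1,1)$ estimate for $N$ applied to $(g-m_{\widetilde{6B_{x}}}(g))\chi_{6B_{x}}$, together with the $K_{B,Q}$ term of $M^{\sharp}$, gives the per-ball factor $C\gamma$. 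This replacement of ``Whitney balls of the level set'' by ``doubling witnessing balls inside the level set'' is precisely where the hypotheses $(\ref{1.2})$--$(\ref{1.3})$ and the abundance of $(6,\beta_{0})$-doubling balls are used, and it is the step your proposal is missing. A secondary, smaller issue: your reduction puts the hypotheses on $g=|f|^{\delta}$, but the cancellation hypothesis of the lemma is $\int_{X}f\,d\mu=0$, not $\int_{X}|f|^{\delta}d\mu=0$, so the case $\|\mu\|<\infty$ does not follow from the $\delta=1$ statement by this substitution and needs the separate argument of \cite{X2}; for the theorems of the present paper only $\|\mu\|=\infty$ is used, so this affects the lemma as stated rather than its application.
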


\begin{lem}\cite{FYY1}\label{lem2.2}
Let $1\leq p<+\infty$ and $1<\rho <+\infty$. Then $b\in RBMO(\mu)$ if
and only if for any ball $B\subset X$,
\begin{equation*}
\biggl\{\frac{1}{\mu(\rho
B)}\int_{B}|b_{B}-m_{\widetilde{B}}(b)|^{p}d\mu(x)\biggr\}^{1/p}\leq
C,
\end{equation*} and for any two doubling balls
$B\subset Q$,
\begin{equation*}
|m_{B}(b)-m_{Q}(b)|\leq C K_{B,Q}.
\end{equation*}
\end{lem}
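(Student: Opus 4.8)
The plan is to establish the $L^{p}$ John--Nirenberg inequality for $\mathrm{RBMO}(\mu)$: the lemma asserts precisely that the $L^{1}$ oscillation condition $(\ref{1.9})$ can be self-improved to the $L^{p}$ oscillation condition (with the integrand read, as in $(\ref{1.9})$, as $|b(x)-m_{\widetilde{B}}(b)|^{p}$) for \emph{every} $p\in[1,\infty)$ and every dilation parameter $\rho>1$, while the coherence condition $(\ref{1.10})$ on doubling balls is left untouched. The ``if'' direction is immediate: if the $L^{p}$ bound holds for some $\rho$ and some $p\ge 1$, then by H\"older's (or Jensen's) inequality it holds with $p=1$, and one recovers $(\ref{1.9})$ after invoking the standard fact (see \cite{BD1,T1}) that the class $\mathrm{RBMO}(\mu)$, and the order of magnitude of $\|\cdot\|_{\ast}$, is independent of the particular choice of $\rho>1$ in $(\ref{1.9})$; together with $(\ref{1.10})$ this gives $b\in\mathrm{RBMO}(\mu)$. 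So the entire content is the forward direction.

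For the forward direction I would first reduce matters to a distributional estimate: it suffices to show that there are constants $c_{1},c_{2}>0$, depending only on the geometrically doubling constant $N_{0}$, on $C_{\lambda}$, and on $\rho$, such that for every ball $B$ and every $t>0$
\begin{equation*}
\mu\bigl(\{x\in B:\ |b(x)-m_{\widetilde{B}}(b)|>t\}\bigr)\ \le\ c_{1}\,\mu(\rho B)\,\exp\bigl(-c_{2}\,t/\|b\|_{\ast}\bigr).
\end{equation*}
Granting this, one writes $\int_{B}|b(x)-m_{\widetilde{B}}(b)|^{p}\,d\mu = p\int_{0}^{\infty}t^{p-1}\mu(\{x\in B:|b(x)-m_{\widetilde{B}}(b)|>t\})\,dt$, splits the integral at $t=\|b\|_{\ast}$, estimates the super-level set by $\mu(B)\le\mu(\rho B)$ on the first piece and by the exponential bound on the second, and obtains $\frac1{\mu(\rho B)}\int_{B}|b(x)-m_{\widetilde{B}}(b)|^{p}\,d\mu\le C_{p}\,\|b\|_{\ast}^{p}$, which is the claim.

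The core is the exponential decay, which I would prove by a Calder\'on--Zygmund / stopping-time argument adapted to $(X,d,\mu)$, following Tolsa's argument on $(\mathbb{R}^{n},\mu)$ \cite{T1} (see also \cite{HYY,BD1}). Normalizing $\|b\|_{\ast}=1$, fix $B$ and set $h=b-m_{\widetilde{B}}(b)$. Using the geometrically doubling property (Definition \ref{def1.1}) to extract a Besicovitch/Whitney-type covering of the super-level set of a suitably localized maximal function of $h$ at a large threshold $t_{0}$, one produces a countable family of balls $\{B_{i}\}$ with bounded overlap such that, up to a $\mu$-null set, $\{x\in B:|h(x)|>t_{0}\}\subset\bigcup_{i}B_{i}$, the averages obey $t_{0}<\frac1{\mu(\rho B_{i})}\int_{B_{i}}|h|\,d\mu\le C t_{0}$, and $\sum_{i}\mu(\rho B_{i})\le\frac{C}{t_{0}}\int_{B}|h|\,d\mu\le\frac{C}{t_{0}}\mu(\rho B)$ by $(\ref{1.9})$. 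Since the Whitney balls $B_{i}$ are comparable to $B$, the upper doubling condition $(\ref{1.2})$ forces $K_{B_{i},B}\le C$, and hence, after passing to the doubling envelopes and using $(\ref{1.10})$, $|m_{\widetilde{B_{i}}}(b)-m_{\widetilde{B}}(b)|\le C t_{0}$; writing $h=(b-m_{\widetilde{B_{i}}}(b))+(m_{\widetilde{B_{i}}}(b)-m_{\widetilde{B}}(b))$ one iterates the decomposition inside each $B_{i}$ at level $(k+1)t_{0}$ starting from level $k t_{0}$. This yields $\mu(\{x\in B:|h(x)|>kC t_{0}\})\le(C/t_{0})^{k}\,\mu(\rho B)$; choosing $t_{0}$ large enough that $\theta:=C/t_{0}<1$ gives geometric decay in $k$, hence the asserted exponential bound.

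The main obstacle is exactly this geometric step: one must build a Calder\'on--Zygmund decomposition valid for a merely upper-doubling (possibly non-doubling) measure, and, more delicately, one must control the mean oscillations $|m_{\widetilde{B_{i}}}(b)-m_{\widetilde{B}}(b)|$ uniformly across all generations of the stopping procedure. This forces one to track the coefficients $K_{B_{i},B}$ (and $K_{\widetilde{B_{i}},\widetilde{B}}$) for the Whitney balls and the passage between a ball and its doubling envelope $\widetilde{\,\cdot\,}$, which is the point where the non-homogeneity of $\mu$ genuinely enters; once these bookkeeping estimates are in place the conclusion follows by routine iteration and the integration-in-$t$ step above.
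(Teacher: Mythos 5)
The paper itself offers no proof of this lemma: it is imported verbatim from \cite{FYY1}, where it rests on the John--Nirenberg inequality for $\mathrm{RBMO}(\mu)$ in the sense of Tolsa (in the Euclidean non-doubling case this is in \cite{T2}, not \cite{T1}; in the present metric setting see \cite{H}, \cite{BD1}). Your overall route --- read the integrand as $|b(x)-m_{\widetilde{B}}(b)|^{p}$, dispose of the ``if'' direction by H\"older plus the $\rho$-independence of $\mathrm{RBMO}(\mu)$, and obtain the ``only if'' direction from an exponential distributional estimate proved by a stopping-time iteration and then integrated by the layer-cake formula --- is exactly the route taken in those sources, and the reduction and integration steps as you present them are fine.

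There is, however, a genuine gap at the crux of the iteration. You justify the key bound $|m_{\widetilde{B_i}}(b)-m_{\widetilde{B}}(b)|\le Ct_{0}$ by asserting that the stopping (Whitney) balls $B_{i}$ are ``comparable to $B$'', so that (\ref{1.2}) forces $K_{B_i,B}\le C$. Both claims fail in general: the balls produced at a large threshold can be arbitrarily small relative to $B$, and for $B_{i}\subset B$ with $r_{B_i}\ll r_{B}$ the coefficient $K_{B_i,B}$ need not be bounded --- each summand in its definition is at most $1$ by (\ref{1.2}), but there are about $\log_{6}(r_{B}/r_{B_i})$ of them, so already for $\mu$ comparable to $\lambda$ (e.g. the homogeneous case $\lambda(x,r)=Cr^{k}$) one has $K_{B_i,B}\approx N_{B_i,B}\to\infty$; boundedness via Lemma \ref{lem2.10}(ii) requires comparable radii, which you do not have. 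The correct source of the bound $|m_{\widetilde{B_i}}(b)-m_{\widetilde{B}}(b)|\le Ct_{0}\|b\|_{\ast}$ is the stopping condition itself: the predecessor (or a fixed dilation) of $B_{i}$ has $|h|$-average at most $Ct_{0}$, and one then passes to the doubling envelope using $K_{B_i,\widetilde{B_i}}\le C$ (Lemma \ref{lem2.10}(iii)), the doubling of $\widetilde{B_i}$, and (\ref{1.10}) between comparable doubling balls when $\widetilde{B_i}$ is no longer contained in a fixed dilation of $B$. This is precisely the bookkeeping you flag as the ``main obstacle'' but do not carry out, and the justification you do give would not survive it. A secondary point: the Besicovitch covering theorem is not available in a general geometrically doubling metric space, so the bounded-overlap covering must instead come from a Vitali-type $5r$-covering (with a non-centered maximal function) or from dyadic systems, as in \cite{H} and \cite{BD1}.
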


\begin{lem}\cite{FYY1}\label{lem2.3}
\begin{equation*}
|m_{\widetilde{6^{j}\frac{6}{5}B}}(b)-m_{\widetilde{B}}(b)|\leq
Cj||b||_{\ast}.
\end{equation*}
\end{lem}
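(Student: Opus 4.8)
The plan is to derive the inequality from a telescoping argument along the chain of concentric balls $6^{i}\frac{6}{5}B$, $i=0,1,\dots,j$, together with two standard facts about $\mathrm{RBMO}(\mu)$ and the upper doubling condition; throughout, $b\in\mathrm{RBMO}(\mu)$ and $j\ge 1$.

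First I would recall the two ingredients, both classical consequences of the definition of $\mathrm{RBMO}(\mu)$ (cf. \cite{BD1,FYY1} and Lemma \ref{lem2.2}). Ingredient (I): for \emph{every} ball $B\subset X$, $|m_{B}(b)-m_{\widetilde{B}}(b)|\le C\|b\|_{\ast}$, which uses that $\widetilde{B}$ is an $(\alpha,\beta)$-doubling ball of the form $6^{k}B$ together with the elementary bound $K_{B,\widetilde{B}}\le C$. Ingredient (II): for any two balls $B_{1}\subset B_{2}$, $|m_{B_{1}}(b)-m_{B_{2}}(b)|\le C K_{B_{1},B_{2}}\|b\|_{\ast}$; this is deduced from Ingredient (I), from inequality (\ref{1.10}) applied to the doubling balls $\widetilde{B_{1}},\widetilde{B_{2}}$, and from the standard comparison properties of the coefficients $K_{B,Q}$.

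Next I would compute the coefficients occurring in the chain. For the consecutive concentric balls $6^{i}\frac{6}{5}B\subset 6^{i+1}\frac{6}{5}B$ (center $x_{B}$, radius ratio $6$) one has $N_{6^{i}\frac{6}{5}B,\,6^{i+1}\frac{6}{5}B}=1$, hence by the upper doubling condition (\ref{1.2}), which gives $\mu(B(x,r))\le\lambda(x,r)$,
$$
K_{6^{i}\frac{6}{5}B,\,6^{i+1}\frac{6}{5}B}=1+\frac{\mu\bigl(6^{i+1}\frac{6}{5}B\bigr)}{\lambda\bigl(x_{B},\,6^{i+1}\frac{6}{5}r_{B}\bigr)}\le 2,
$$
and likewise $K_{B,\frac{6}{5}B}\le 2$. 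Inserting $m_{6^{j}\frac{6}{5}B}(b)$ and $m_{B}(b)$ and applying Ingredient (I) to $6^{j}\frac{6}{5}B$ and to $B$,
$$
\bigl|m_{\widetilde{6^{j}\frac{6}{5}B}}(b)-m_{\widetilde{B}}(b)\bigr|\le C\|b\|_{\ast}+\bigl|m_{6^{j}\frac{6}{5}B}(b)-m_{B}(b)\bigr|,
$$
and telescoping through the balls $6^{i}\frac{6}{5}B$ and using Ingredient (II) with the coefficient bounds just obtained,
$$
\bigl|m_{6^{j}\frac{6}{5}B}(b)-m_{B}(b)\bigr|\le\sum_{i=0}^{j-1}\bigl|m_{6^{i+1}\frac{6}{5}B}(b)-m_{6^{i}\frac{6}{5}B}(b)\bigr|+\bigl|m_{\frac{6}{5}B}(b)-m_{B}(b)\bigr|\le 2C(j+1)\|b\|_{\ast}.
$$
Combining the last two displays yields $|m_{\widetilde{6^{j}\frac{6}{5}B}}(b)-m_{\widetilde{B}}(b)|\le Cj\|b\|_{\ast}$ for $j\ge 1$, as claimed.

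The only genuinely delicate point is Ingredient (II) (and to a lesser extent Ingredient (I)) for balls that need not be doubling: since $\widetilde{B_{1}}$ need not be contained in $\widetilde{B_{2}}$, one cannot apply (\ref{1.10}) to that pair directly but must route through an auxiliary doubling ball and invoke the standard quasi-monotonicity and quasi-subadditivity of the coefficients $K_{B,Q}$ from \cite{BD1}. Once those bookkeeping lemmas are granted, the argument above is routine, and the factor $j$ emerges exactly as the length of the telescoping chain.
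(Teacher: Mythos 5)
There is a genuine gap, and it lies exactly where you wave it away: your Ingredients (I) and (II) are false for general (non-doubling) balls. The RBMO condition (\ref{1.9}) only controls the oscillation of $b$ on a ball $B$ after normalizing by $\mu(\rho B)$, not by $\mu(B)$; consequently the best one can say for an arbitrary ball is $|m_{B}(b)-m_{\widetilde{B}}(b)|\leq C\,\frac{\mu(\rho B)}{\mu(B)}\,\|b\|_{\ast}$, and the factor $\mu(\rho B)/\mu(B)$ is unbounded precisely when $B$ is not doubling. This is the whole reason the definition of RBMO$(\mu)$, and every comparison estimate in this theory (including (\ref{1.10}) and Lemma \ref{lem2.2}), is phrased in terms of $m_{\widetilde{B}}(b)$ or means over doubling balls only. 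Your telescoping chain runs through the plain averages $m_{6^{i}\frac{6}{5}B}(b)$, $i=0,\dots,j$, and the balls $6^{i}\frac{6}{5}B$ need not be doubling, so neither the step $|m_{6^{i}\frac{6}{5}B}(b)-m_{6^{i+1}\frac{6}{5}B}(b)|\leq CK\|b\|_{\ast}$ nor the endpoint comparisons $|m_{B}(b)-m_{\widetilde{B}}(b)|\leq C\|b\|_{\ast}$ are justified; the difficulty is not the bookkeeping with $K_{B,Q}$ or the non-nestedness of $\widetilde{B_{1}}$ and $\widetilde{B_{2}}$, but the fact that averages over non-doubling balls simply are not controlled by $\|b\|_{\ast}$.

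The repair is to never touch a non-doubling average: use the standard RBMO property (from \cite{T2,BD1,FYY1}) that for \emph{arbitrary} balls $R\subset S$ one has $|m_{\widetilde{R}}(b)-m_{\widetilde{S}}(b)|\leq CK_{R,S}\|b\|_{\ast}$, whose proof routes exclusively through doubling balls via (\ref{1.10}) and Lemma \ref{lem2.10}. Apply it once with $R=B$ and $S=6^{j}\frac{6}{5}B$ (so $\widetilde{S}=\widetilde{6^{j}\frac{6}{5}B}$), and then bound $K_{B,6^{j}\frac{6}{5}B}\leq C(j+1)$ by the computation you already carried out for consecutive balls: the defining sum has about $j+1$ terms and each term $\mu(6^{k}B)/\lambda(x_{B},6^{k}r_{B})$ is at most $1$ by the upper doubling inequality (\ref{1.2}). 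This gives the stated bound $Cj\|b\|_{\ast}$ for $j\geq 1$ in one step, with no telescoping through intermediate averages. (Note also that the paper itself cites this lemma from \cite{FYY1} rather than proving it, so correctness of the argument is the only issue here.)
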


\begin{lem}\cite{FYY1}\label{lem2.10}
$K(B,Q)$ has the following properties:

(i) For all balls $B\subset R\subset Q$, $K(B, R)\leq 2K(B, Q)$.

(ii) For any $\rho\in[1,+\infty)$, there exists a positive constant $C(\rho)$, depending on $\rho$, such
that, for all balls $B\subset Q$ with $r_Q \leq \rho r_B$, $K(B,Q) \leq C(\rho)$.

(iii) There exists a positive constant $C$ such that, for all balls $B$, $K(B, \tilde{B}) \leq C$.

(iv) There exists a positive constant $C$ such that, for all balls $B\subset R\subset Q$, $K(B,Q) \leq K(B, R) + CK(R, Q)$.

(v) There exists a positive constant $C$ such that, for all balls $B\subset R\subset Q$, $K(R,Q)\leq CK(B,Q)$.
\end{lem}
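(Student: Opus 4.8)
The plan is to read all five statements directly off the definition $K_{B,Q}=1+\sum_{k=1}^{N_{B,Q}}\mu(6^{k}B)/\lambda(x_{B},6^{k}r_{B})$, using only three structural facts: (a) the upper doubling bound $\mu(6^{k}B)=\mu(B(x_{B},6^{k}r_{B}))\leq\lambda(x_{B},6^{k}r_{B})$, so every summand lies in $(0,1]$; (b) from $\lambda(x,2s)\leq C_{\lambda}\lambda(x,s)$ and the monotonicity of $\lambda(x,\cdot)$ one gets $\lambda(x,2^{a}s)\leq C_{\lambda}^{a}\lambda(x,s)$ for integers $a\geq 0$, so $\lambda(x,6^{j}r)$ and $\lambda(x,r)$ are comparable up to a factor $C_{\lambda}^{3j}$; (c) property (\ref{1.3}), i.e. $\lambda(x_{B},r)\leq\tilde{C}\lambda(x_{R},r)$ whenever $d(x_{B},x_{R})\leq r$. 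Throughout we may assume $r_{B}\leq r_{R}\leq r_{Q}$ (otherwise the relevant $K$ equals $1$), so that $N_{B,R}\leq N_{B,Q}$, and $d(x_{B},x_{R})<r_{R}$ since $x_{B}\in B\subseteq R$.

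Parts (i), (ii) and (iii) are quick. For (i), $r_{R}\leq r_{Q}$ gives $N_{B,R}\leq N_{B,Q}$, and $K_{B,R}$ is simply $K_{B,Q}$ truncated after $N_{B,R}$ terms (same center and radius), so $K_{B,R}\leq K_{B,Q}\leq 2K_{B,Q}$. For (ii), $r_{Q}\leq\rho r_{B}$ and minimality of $N_{B,Q}$ force $6^{N_{B,Q}-1}r_{B}<r_{Q}\leq\rho r_{B}$, so $N_{B,Q}\leq 1+\log_{6}\rho$, and by (a), $K_{B,Q}\leq 1+N_{B,Q}\leq C(\rho)$. For (iii), by the standard existence of doubling balls (Lemma~2.3 of \cite{BD1}; see the remark after the definition of $(\alpha,\beta)$-doubling balls), $\widetilde B=6^{k_{0}}B$ with $k_{0}=\min\{k\geq 0:6^{k}B\ \text{is}\ (6,\beta_{0})\text{-doubling}\}$, so $N_{B,\widetilde B}=k_{0}$; for $0\leq j<k_{0}$ the ball $6^{j}B$ is non-doubling, i.e. $\mu(6^{j+1}B)>\beta_{0}\mu(6^{j}B)$, whence $\mu(6^{k}B)<\beta_{0}^{-(k_{0}-k)}\mu(6^{k_{0}}B)\leq\beta_{0}^{-(k_{0}-k)}\lambda(x_{B},6^{k_{0}}r_{B})$ and, with (b), $\mu(6^{k}B)/\lambda(x_{B},6^{k}r_{B})\leq(C_{\lambda}^{3}/\beta_{0})^{k_{0}-k}$. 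Since $\beta_{0}>C_{\lambda}^{3\log_{2}6}\geq C_{\lambda}^{3}$, the ratio $\theta:=C_{\lambda}^{3}/\beta_{0}$ is $<1$, so $K_{B,\widetilde B}=1+\sum_{k=1}^{k_{0}}\theta^{k_{0}-k}\leq 1+(1-\theta)^{-1}$, a bound independent of $B$.

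Parts (iv) and (v) are the crux and use a single device. For (iv), write $K_{B,Q}=K_{B,R}+\sum_{k=N_{B,R}+1}^{N_{B,Q}}\mu(6^{k}B)/\lambda(x_{B},6^{k}r_{B})$; for $k$ in the tail put $l=k-N_{B,R}+1$, so that $6^{l-1}r_{R}\leq 6^{k}r_{B}<6^{l}r_{R}$. The triangle inequality and $d(x_{B},x_{R})<r_{R}$ give $6^{k}B\subseteq 6^{l+1}R$, hence $\mu(6^{k}B)\leq\mu(6^{l+1}R)$, while facts (b) and (c) give $\lambda(x_{B},6^{k}r_{B})\geq\lambda(x_{B},6^{l-1}r_{R})\geq\tilde{C}^{-1}C_{\lambda}^{-6}\lambda(x_{R},6^{l+1}r_{R})$, so the $k$-th summand is at most $\tilde{C}C_{\lambda}^{6}$ times the $(l+1)$-st summand of $K_{R,Q}$. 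The minimality inequalities for $N_{B,R},N_{B,Q},N_{R,Q}$ keep $l+1$ at most $N_{R,Q}+2$; summing, and using (a) to bound by $1$ each the at most two terms of index exceeding $N_{R,Q}$, the tail is $\leq\tilde{C}C_{\lambda}^{6}(K_{R,Q}+2)\leq CK_{R,Q}$ (as $K_{R,Q}\geq 1$), which is (iv). Part (v) is symmetric: for each summand $\mu(6^{m}R)/\lambda(x_{R},6^{m}r_{R})$ of $K_{R,Q}$, using $r_{R}\leq 6^{N_{B,R}}r_{B}$ one gets $6^{m}R\subseteq 6^{m+N_{B,R}+1}B$ and, via (b) and (c), $\lambda(x_{R},6^{m}r_{R})\geq\tilde{C}^{-1}C_{\lambda}^{-6}\lambda(x_{B},6^{m+N_{B,R}+1}r_{B})$, so that summand is $\leq\tilde{C}C_{\lambda}^{6}$ times the $(m+N_{B,R}+1)$-st summand of $K_{B,Q}$; the minimality inequalities keep those indices at most $N_{B,Q}+2$, so $K_{R,Q}\leq 1+\tilde{C}C_{\lambda}^{6}(K_{B,Q}+2)\leq CK_{B,Q}$. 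The main obstacle is exactly this bookkeeping: one must simultaneously track $N_{B,R}$, $N_{B,Q}$ and $N_{R,Q}$, verify that the re-indexed dilates in the tail sums overshoot $N_{R,Q}$ (resp. $N_{B,Q}$) by only a bounded amount, and check the nesting $6^{k}B\subseteq 6^{l+1}R$ together with the comparison of dominating functions through the constants $\tilde{C}$, $C_{\lambda}$ and the fixed $\beta_{0}$; an off-by-one in the definition of $N_{\cdot,\cdot}$ is the kind of slip that would invalidate the estimate.
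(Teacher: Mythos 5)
The paper itself offers no argument for this lemma: it is quoted verbatim from \cite{FYY1} (where it is in turn adapted from the coefficient estimates of Tolsa, Hyt\"onen and Bui--Duong), so there is no in-paper proof to compare against; your blind write-up supplies the missing verification, and it is essentially the standard one. Each part is indeed read off the definition $K_{B,Q}=1+\sum_{k=1}^{N_{B,Q}}\mu(6^{k}B)/\lambda(x_{B},6^{k}r_{B})$ using $\mu(6^{k}B)\le\lambda(x_{B},6^{k}r_{B})$, the doubling of $\lambda$ in the radius variable, and (\ref{1.3}) to change centers. I checked the delicate bookkeeping: in (iv) the shift $l=k-N_{B,R}+1$ does give $6^{l-1}r_{R}\le 6^{k}r_{B}<6^{l}r_{R}$, hence $6^{k}B\subset 6^{l+1}R$ and the domination of the $k$-th summand of $K_{B,Q}$ by $\tilde{C}C_{\lambda}^{6}$ times the $(l+1)$-st summand of $K_{R,Q}$; the minimality inequalities $N_{B,Q}\le N_{B,R}+N_{R,Q}$ and (for (v)) $N_{B,R}+N_{R,Q}\le N_{B,Q}+1$ are correct and confine the overshoot to at most two indices, whose summands are $\le 1$; and (iii) is the usual geometric-series argument from non-doubling of the intermediate balls $6^{j}B$, $j<k_{0}$, together with $\beta_{0}>C_{\lambda}^{3\log_{2}6}\ge C_{\lambda}^{3}$, exactly as in \cite{BD1} and \cite{T2}. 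The one loose point is the blanket reduction ``$r_{B}\le r_{R}\le r_{Q}$, otherwise the relevant $K$ equals $1$'': in a general metric space set containment $R\subset Q$ does not force $r_{R}\le r_{Q}$, and in that degenerate situation it is only $K_{R,Q}$ that trivializes, not $K_{B,R}$ or $K_{B,Q}$; one should instead note that $r_{Q}<r_{R}$ gives $N_{B,Q}\le N_{B,R}$, so (iv) becomes $K_{B,Q}\le K_{B,R}$ and (v) becomes $K_{R,Q}=1\le K_{B,Q}$, while (i) in such pathological configurations is a matter of convention that the cited sources also pass over. This caveat does not affect the way the lemma is used in the paper, where the relevant chains are $B\subset\tilde{B}\subset Q$ and their dilates with genuinely increasing radii, so your proof can stand as a self-contained substitute for the citation.
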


\begin{lem}\cite{WX}\label{lem2.4}
Suppose that $\mu$ is a Radon measure with $||\mu||=\infty$. Let
$T$ be defined by (\ref{1.7}). Let $1<p_{1},p_{2},q<+\infty$, $f_{1}\in
L^{p_{1}}(\mu)$ and $f_{2}\in L^{p_{2}}(\mu)$. If $T$ is bounded
from $L^{1}(\mu)\times L^{1}(\mu)$ to $ L^{1/2,\infty}(\mu)$, then
there exists a constant $C>0$ such that
\begin{equation*}
||T(f_{1},f_{2})||_{L^{q}(\mu)}\leq C
||f_{1}||_{L^{p_{1}}(\mu)}||f_{2}||_{L^{p_{2}}(\mu)},
\end{equation*}
where $\frac{1}{q}=\frac{1}{p_{1}}+\frac{1}{p_{2}}$.
\end{lem}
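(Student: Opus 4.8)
The plan is to run the sharp-maximal-function machinery for multilinear singular integrals on $(X,d,\mu)$, keeping careful track of the extra diagonal singularity of $K$. By density it suffices to prove the estimate when $f_1,f_2$ are bounded with bounded support, so that $T(f_1,f_2)$ is given by the integral \eqref{1.7} off $\operatorname{supp}f_1\cap\operatorname{supp}f_2$ and $\inf(1,N_\delta(T(f_1,f_2)))\in L^q(\mu)$ holds by a crude size estimate; the general case then follows by approximation. Fix $0<\delta<1/2$, $r\in(1,\min\{p_1,p_2\})$ and $\rho\geq 5$. The heart of the matter is the pointwise bound
\[
M^\sharp_\delta\bigl(T(f_1,f_2)\bigr)(x)\ \leq\ C\,M_{r,(\rho)}f_1(x)\,M_{r,(\rho)}f_2(x),\qquad x\in X.
\]
Granting it, Lemma \ref{lem2.1} gives $\|N_\delta(T(f_1,f_2))\|_{L^q(\mu)}\leq C\|M^\sharp_\delta(T(f_1,f_2))\|_{L^q(\mu)}$; since $|T(f_1,f_2)|\leq N_\delta(T(f_1,f_2))$ $\mu$-almost everywhere, H\"older's inequality together with the $L^{p_j/r}(\mu)$-boundedness of $M_{r,(\rho)}$ (valid because $p_j>r$ and $\rho\geq5$) yields $\|T(f_1,f_2)\|_{L^q(\mu)}\leq C\|M_{r,(\rho)}f_1\|_{L^{p_1}(\mu)}\|M_{r,(\rho)}f_2\|_{L^{p_2}(\mu)}\leq C\|f_1\|_{L^{p_1}(\mu)}\|f_2\|_{L^{p_2}(\mu)}$, which is the assertion.

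To prove the pointwise bound, fix $x$ and a ball $B=B(x_B,r_B)\ni x$; since $M^\sharp_\delta$ contains two suprema one must estimate $\frac1{\mu(6B)}\int_B|T(f_1,f_2)(y)-h_B|^\delta\,d\mu(y)$ for a suitable constant $h_B$ and $|m_B(T(f_1,f_2))-m_Q(T(f_1,f_2))|/K_{B,Q}$ for doubling balls $B\subset Q\ni x$. Put $B^\ast=\tfrac65 B$ when $r_B\geq1$ and $B^\ast=B(x_B,Cr_B^\alpha)$ when $r_B<1$ — the latter enlargement being forced because for $y\in B$ one has $d(y,x_B)\leq r_B$ and $0<\alpha<1$, so \eqref{1.5} becomes usable only once $\max_jd(x_B,y_j)\gtrsim r_B^\alpha$ — and split $f_j=f_j^0+f_j^\infty$ with $f_j^0=f_j\chi_{B^\ast}$, so $T(f_1,f_2)=\sum_{a_1,a_2\in\{0,\infty\}}T(f_1^{a_1},f_2^{a_2})$. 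The term $T(f_1^0,f_2^0)$ is treated by Kolmogorov's inequality and the hypothesis $T\colon L^1(\mu)\times L^1(\mu)\to L^{1/2,\infty}(\mu)$ (here $\delta<1/2$ is used), giving
\[
\frac1{\mu(6B)}\int_B|T(f_1^0,f_2^0)|^\delta\,d\mu\ \leq\ C\,\frac{\mu(B)^{1-2\delta}}{\mu(6B)}\Bigl(\textstyle\int_{B^\ast}|f_1|\,d\mu\Bigr)^{\!\delta}\Bigl(\textstyle\int_{B^\ast}|f_2|\,d\mu\Bigr)^{\!\delta},
\]
which, when $r_B\geq1$ (so that $\rho B^\ast\subset 6B$), is $\leq C\bigl(M_{r,(\rho)}f_1(x)M_{r,(\rho)}f_2(x)\bigr)^\delta$. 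For the three terms carrying at least one $f_j^\infty$ one sets the corresponding part of $h_B$ to be $T(f_1^{a_1},f_2^{a_2})(x_B)$ and bounds $|T(f_1^{a_1},f_2^{a_2})(y)-T(f_1^{a_1},f_2^{a_2})(x_B)|$, $y\in B$, by decomposing $X\setminus B^\ast$ into the annuli $6^kB^\ast\setminus 6^{k-1}B^\ast$ and invoking \eqref{1.5} in the far variable(s) and the size bound \eqref{1.4} in the near variable of the mixed terms; the gain $d(y,x_B)^\delta\bigl[\sum_jd(x_B,y_j)\bigr]^{-\delta/\alpha}$ furnished by \eqref{1.5}, together with $\mu(6^kB^\ast)\leq\lambda(x_B,6^kr_{B^\ast})$ and \eqref{1.2}--\eqref{1.3}, makes the resulting geometric series sum to a constant multiple of $M_{r,(\rho)}f_1(x)M_{r,(\rho)}f_2(x)$. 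The estimate for $|m_B(T(f_1,f_2))-m_Q(T(f_1,f_2))|$ with $B\subset Q$ doubling is obtained by the same annular scheme, the summation over the $\sim N_{B,Q}$ scales separating $B$ from $Q$ producing the factor $K_{B,Q}$.

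The main obstacle is the enlarged region present when $r_B<1$: the smoothness estimates \eqref{1.5} and \eqref{1.6} hold only under the restrictions $Cd(x,x')^\alpha\leq\max_jd(x,y_j)$ and $Cd(y_j,y_j')^\alpha\leq\max_jd(x,y_j)$, not under the first-order conditions of the Calder\'on--Zygmund case, so for a small ball $B$ there is, besides the local part $f_j\chi_{\frac65 B}$ and the genuinely far part $f_j\chi_{(B^\ast)^c}$ (both controlled as above), an intermediate part $f_j\chi_{B^\ast\setminus\frac65 B}$ on which only the size bound \eqref{1.4} is at hand. Controlling this intermediate contribution — by decomposing $B^\ast\setminus\tfrac65 B$ into the $\sim\log(1/r_B)$ annuli $6^k\tfrac65 B\setminus 6^{k-1}\tfrac65 B$, estimating each annulus through a combination of \eqref{1.4} (or the weak-type hypothesis applied at the scale of $B^\ast$) with the upper-doubling inequality $\mu(\rho B(x_B,R))\leq C\lambda(x_B,R)$ and the monotonicity of $\lambda$, and then carefully summing over these scales — together with the accompanying case distinction $r_B\leq1$ versus $r_B>1$, is the delicate technical point at which the full strength of the kernel conditions \eqref{1.4}--\eqref{1.6} and of the geometric and upper-doubling structure of $(X,d,\mu)$ is used; this is precisely the argument carried out in detail in \cite{WX}. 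Once the intermediate and far contributions are dispatched in this way, assembling the pieces and taking the suprema over $B$ and over $(B,Q)\in\Delta_x$ gives the pointwise bound, and hence the lemma.
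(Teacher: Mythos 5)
Your overall strategy (reduce to a pointwise sharp-maximal estimate, then apply Lemma \ref{lem2.1} and the $L^{p}$-boundedness of $M_{r,(\rho)}$) is exactly the machinery this paper uses for its commutator results, and it is presumably the route of \cite{WX}. But as written your argument has a genuine gap precisely at the point you yourself flag as ``the delicate technical point'': the case of balls with $r_B<1$. First, the pointwise inequality you take as the heart of the proof, $M^{\sharp}_{\delta}(T(f_1,f_2))(x)\le C\,M_{r,(\rho)}f_1(x)M_{r,(\rho)}f_2(x)$ with the \emph{standard} sharp operator normalized by $\mu(6B)$, is not justified for small balls, and your two proposed ways of handling the region between $\frac{6}{5}B$ and $B^{\ast}=B(x_B,Cr_B^{\alpha})$ both fail quantitatively. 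If you put the intermediate region into the Kolmogorov (local) part, i.e.\ $f_j^0=f_j\chi_{B^{\ast}}$, then your own display gives a factor $\mu(B)^{1-2\delta}\mu(\rho B^{\ast})^{2\delta}/\mu(6B)$, and in a general upper-doubling (non-homogeneous) space $\mu(\rho B^{\ast})/\mu(6B)$ is unbounded, so this is not $\le C\bigl(M_{r,(\rho)}f_1(x)M_{r,(\rho)}f_2(x)\bigr)^{\delta}$; your parenthetical ``$\rho B^{\ast}\subset 6B$'' only works when $r_B\ge1$. If instead you cross the intermediate region with annuli $6^k\frac{6}{5}B$ using only the size bound \eqref{1.4} (smoothness \eqref{1.5}--\eqref{1.6} is unavailable there by the very restriction $Cd(z,y)^{\alpha}\le\max_j d(z,y_j)$), each annulus contributes a term comparable to $M_{(5)}f_1(x)M_{(5)}f_2(x)$ with no decay in $k$, and there are about $(1-\alpha)\log_6(1/r_B)$ annuli, so the sum diverges as $r_B\to0$. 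This is why the paper, in Case 2 of the proof of Lemma \ref{lem2.5}, does \emph{not} prove the standard sharp bound but replaces the normalization $\mu(6B)$ by $\mu(6B_0)$ with $l(B_0)=l(B)^{\alpha}$ (a modified sharp-type estimate adapted to the strong singularity), and correspondingly uses $\frac{6}{5}\tilde B_0$ rather than $\frac{6}{5}\tilde B$ in $h_B$; any correct proof of Lemma \ref{lem2.4} must make this modification and then explain why the modified oscillation estimate still feeds into the $N_{\delta}$ versus $M^{\sharp}_{\delta}$ comparison of Lemma \ref{lem2.1} (or an adapted version of Tolsa's Theorem 9.1).

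Second, even setting the above aside, your treatment of the decisive step is circular: the lemma you are asked to prove \emph{is} the result of \cite{WX}, so ``this is precisely the argument carried out in detail in \cite{WX}'' cannot be invoked as the justification for the small-ball estimate. To close the argument you would need to actually carry out the small-radius case: introduce the enlarged concentric ball $B_0$ with $l(B_0)=l(B)^{\alpha}$, prove the oscillation estimate with the $\mu(6B_0)$ normalization (local part by Kolmogorov and the $L^1\times L^1\to L^{1/2,\infty}(\mu)$ hypothesis on $\frac{6}{5}B_0$, far part by \eqref{1.5}--\eqref{1.6}, which are now applicable since $d(z,y_j)\gtrsim l(B)^{\alpha}\ge C d(z,y)^{\alpha}$ for $y_j\notin\frac{6}{5}B_0$), establish the corresponding $K_{B,Q}$-regularity for doubling pairs, and then justify the passage to $\|N_{\delta}(T(f_1,f_2))\|_{L^q(\mu)}\le C\|M^{\sharp}_{\delta}(T(f_1,f_2))\|_{L^q(\mu)}$ for this modified quantity. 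The large-ball case ($r_B\ge1$) of your sketch is fine and matches the paper's Case 1 scheme, but without the small-ball construction the proof is incomplete.
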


\begin{lem}\label{lem2.5}
Suppose that $[b_{1},b_{2},T]$ is defined by (\ref{1.12}), $0<\delta<1/2$,
$1<p_{1},p_{2},q<+\infty$, $1<r<q$ and $b_{1},b_{2}\in RBMO(\mu)$. If $T$ is bounded from $L^{1}(\mu)\times L^{1}(\mu)$ to $
L^{1/2,\infty}(\mu)$, then there exists a constant $C>0$ such that
for any $x\in X$, any ball $B$ with $B\ni x$, $f_{1}\in L^{p_{1}}(\mu)$ and $f_{2}\in
L^{p_{2}}(\mu)$, we have the following two cases:

(i)When $l=l(B):=\sup\limits_{x,y\in B}d(x,y)\geq 1$, assume that $\widetilde{B}$ is the smallest $(\alpha,\beta)$-doubling ball of
the form $6^{k}B$ with $k\in {\mathbf{N}}\bigcup\{0\}$, it follows
\begin{equation}\label{2.11}
\begin{split}
&M_{\delta}^{\sharp}[b_{1},b_{2},T](f_{1},f_{2})(x)\leq
C||b_{1}||_{\ast}||b_{2}||_{\ast}M_{r,(6)}(T(f_{1},f_{2}))(x)\\
&\ \ +C||b_{1}||_{\ast}M_{r,(6)}([b_{2},T](f_{1},f_{2}))(x)
+C||b_{2}||_{\ast}M_{r,(6)}([b_{1},T](f_{1},f_{2}))(x)\\
&\ \ +C||b_{1}||_{\ast}||b_{2}||_{\ast}M_{p_{1},(5)}f_{1}(x)M_{p_{2},(5)}f_{2}(x)
+C||b_{1}||_{\ast}||b_{2}||_{\ast}M_{r,(6)}(T(f_{1}\chi_{\frac{6}{5}\tilde{B}},f_{2}\chi_{\frac{6}{5}\tilde{B}}))(x)\\
&\ \ +C||b_{1}||_{\ast}M_{r,(6)}([b_{2},T](f_{1}\chi_{\frac{6}{5}\tilde{B}},f_{2}\chi_{\frac{6}{5}\tilde{B}}))(x)
+C||b_{2}||_{\ast}M_{r,(6)}([b_{1},T](f_{1}\chi_{\frac{6}{5}\tilde{B}},f_{2}\chi_{\frac{6}{5}\tilde{B}}))(x),\\
\end{split}
\end{equation}

\begin{equation}\label{2.12}
\begin{split}
M_{\delta}^{\sharp}[b_{1},T](f_{1},f_{2})(x)&\leq
C||b_{1}||_{\ast}M_{r,(6)}(T(f_{1},f_{2}))(x)
+C||b_{1}||_{\ast}M_{p_{1},(5)}f_{1}(x)M_{p_{2},(5)}f_{2}(x)\\
&\ \ +C||b_{1}||_{\ast}M_{r,(6)}(T(f_{1}\chi_{\frac{6}{5}\tilde{B}},f_{2}\chi_{\frac{6}{5}\tilde{B}}))(x)
\end{split}
\end{equation}
and
\begin{equation}\label{2.13}
\begin{split}
M_{\delta}^{\sharp}[b_{2},T](f_{1},f_{2})(x)&\leq
C||b_{2}||_{\ast}M_{r,(6)}(T(f_{1},f_{2}))(x)
+C||b_{2}||_{\ast}M_{p_{1},(5)}f_{1}(x)M_{p_{2},(5)}f_{2}(x)\\
&\ \ +C||b_{2}||_{\ast}M_{r,(6)}(T(f_{1}\chi_{\frac{6}{5}\tilde{B}},f_{2}\chi_{\frac{6}{5}\tilde{B}}))(x).
\end{split}
\end{equation}

(ii)When $0<l<1$, assume that $B_{0}$, $Q_{0}$ and $\tilde{B}_{0}$ are concentric with $B$, $Q$ and $\tilde{B}$ respectively and $l(B_{0})=l(B)^{\alpha}$, $l(Q_{0})=l(Q)^{\alpha}$, $l(\tilde{B}_{0})=l(\tilde{B})^{\alpha}$, it follows
\begin{equation}\label{2.14}
\begin{split}
&M_{\delta}^{\sharp}[b_{1},b_{2},T](f_{1},f_{2})(x)\leq
C||b_{1}||_{\ast}||b_{2}||_{\ast}M_{r,(6)}(T(f_{1},f_{2}))(x)\\
&\ \ +C||b_{1}||_{\ast}M_{r,(6)}([b_{2},T](f_{1},f_{2}))(x)
+C||b_{2}||_{\ast}M_{r,(6)}([b_{1},T](f_{1},f_{2}))(x)\\
&\ \ +C||b_{1}||_{\ast}||b_{2}||_{\ast}M_{p_{1},(5)}f_{1}(x)M_{p_{2},(5)}f_{2}(x)
+C||b_{1}||_{\ast}||b_{2}||_{\ast}M_{r,(6)}(T(f_{1}\chi_{\frac{6}{5}\tilde{B}_{0}},f_{2}\chi_{\frac{6}{5}\tilde{B}_{0}}))(x)\\
&\ \ +C||b_{1}||_{\ast}M_{r,(6)}([b_{2},T](f_{1}\chi_{\frac{6}{5}\tilde{B}_{0}},f_{2}\chi_{\frac{6}{5}\tilde{B}_{0}}))(x)
+C||b_{2}||_{\ast}M_{r,(6)}([b_{1},T](f_{1}\chi_{\frac{6}{5}\tilde{B}_{0}},f_{2}\chi_{\frac{6}{5}\tilde{B}_{0}}))(x),\\
\end{split}
\end{equation}

\begin{equation}\label{2.15}
\begin{split}
M_{\delta}^{\sharp}[b_{1},T](f_{1},f_{2})(x)&\leq
C||b_{1}||_{\ast}M_{r,(6)}(T(f_{1},f_{2}))(x)
+C||b_{1}||_{\ast}M_{p_{1},(5)}f_{1}(x)M_{p_{2},(5)}f_{2}(x)\\
&\ \ +C||b_{1}||_{\ast}M_{r,(6)}(T(f_{1}\chi_{\frac{6}{5}\tilde{B}_{0}},f_{2}\chi_{\frac{6}{5}\tilde{B}_{0}}))(x)
\end{split}
\end{equation}
and
\begin{equation}\label{2.16}
\begin{split}
M_{\delta}^{\sharp}[b_{2},T](f_{1},f_{2})(x)&\leq
C||b_{2}||_{\ast}M_{r,(6)}(T(f_{1},f_{2}))(x)
+C||b_{2}||_{\ast}M_{p_{1},(5)}f_{1}(x)M_{p_{2},(5)}f_{2}(x)\\
&\ \ +C||b_{2}||_{\ast}M_{r,(6)}(T(f_{1}\chi_{\frac{6}{5}\tilde{B}_{0}},f_{2}\chi_{\frac{6}{5}\tilde{B}_{0}}))(x).
\end{split}
\end{equation}

\end{lem}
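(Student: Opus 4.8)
The plan is to estimate $M_\delta^\sharp$ of the commutators by fixing $x \in X$ and a ball $B \ni x$, and then controlling, for a suitable constant $h_B$, the average
$$\frac{1}{\mu(6B)}\int_B \big|\, |[b_1,b_2,T](f_1,f_2)(y)|^\delta - |h_B|^\delta \,\big|\, d\mu(y)$$
together with the oscillation term $\frac{|m_B(\cdot) - m_Q(\cdot)|}{K_{B,Q}}$ for doubling balls $B \subset Q$. Since $0<\delta<1/2<1$, I would freely use the elementary inequality $\big||a|^\delta-|b|^\delta\big| \le |a-b|^\delta$ to linearize, so it suffices to bound the $\delta$-th power averages of $[b_1,b_2,T](f_1,f_2)(y) - h_B$. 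The decisive algebraic step is to rewrite the commutator by subtracting constants $\lambda_1 = m_{\widetilde B}(b_1)$, $\lambda_2 = m_{\widetilde B}(b_2)$ inside each slot: using the pointwise identity
$$[b_1,b_2,T](f_1,f_2) = (b_1-\lambda_1)(b_2-\lambda_2)T(f_1,f_2) - (b_1-\lambda_1)T(f_1,(b_2-\lambda_2)f_2) - (b_2-\lambda_2)T((b_1-\lambda_1)f_1,f_2) + T((b_1-\lambda_1)f_1,(b_2-\lambda_2)f_2),$$
which follows from the multilinearity of $T$ and bilinear expansion of $(\ref{1.12})$. This reduces everything to four families of terms, each of which can be split further by decomposing $f_i = f_i\chi_{\frac65\widetilde B} + f_i\chi_{X\setminus\frac65\widetilde B}$ (or $\frac65\widetilde B_0$ in the case $0<l<1$, to accommodate the strong singularity exponent $\alpha$).

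The key steps, in order, would be: (1) For the terms where $(b_i-\lambda_i)$ multiplies outside the operator, apply Hölder's inequality with exponents tuned so that the $(b_i-\lambda_i)$ factors are absorbed via Lemma \ref{lem2.2} (the $L^p$-John–Nirenberg-type estimate for RBMO), producing $\|b_i\|_\ast$ and the $M_{r,(6)}$ averages of $T(f_1,f_2)$, $[b_1,T](f_1,f_2)$, $[b_2,T](f_1,f_2)$ — this accounts for the first three terms on the right of $(\ref{2.11})$ and $(\ref{2.14})$. (2) For the "local" pieces with $f_i\chi_{\frac65\widetilde B}$ inside $T$, use the $L^1\times L^1 \to L^{1/2,\infty}$ boundedness of $T$ (hence of the associated commutators via Kolmogorov's inequality) to get the $M_{r,(6)}\big(T(f_1\chi_{\frac65\widetilde B},f_2\chi_{\frac65\widetilde B})\big)$ and $M_{r,(6)}\big([b_i,T](\cdots)\big)$ terms. (3) For the "global" pieces with $f_i\chi_{X\setminus\frac65\widetilde B}$, exploit the kernel regularity $(\ref{1.5})$–$(\ref{1.6})$: split $X\setminus\frac65\widetilde B$ into annuli $6^{k+1}\widetilde B\setminus 6^k\widetilde B$, use the smoothness estimate with the extra decay factor $\big[\sum_j d(x,y_j)\big]^{-\delta/\alpha}$, and sum — the $K_{B,Q}$-type factors and $\|b_i\|_\ast$ arise here through Lemma \ref{lem2.3}, and the size condition $(\ref{1.4})$ controls the remaining mass; this is what produces the $M_{p_1,(5)}f_1\cdot M_{p_2,(5)}f_2$ term. (4) Finally, estimate $|m_B(\cdot)-m_Q(\cdot)|$ for doubling $B\subset Q$ by telescoping through the chain $B, 6B, \dots, Q$, again invoking Lemma \ref{lem2.10} to bound the accumulated $K$-constants.

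The main obstacle I anticipate is the case $0<l(B)<1$: because the kernel smoothness in $(\ref{1.5})$ and $(\ref{1.6})$ is only valid under the constraint $Cd(x,x')^\alpha \le \max_j d(x,y_j)$ (rather than the usual $Cd(x,x')\le \cdots$), the naive annular decomposition around $\widetilde B$ fails for small balls. The remedy — and the reason the statement introduces the dilated balls $B_0, Q_0, \widetilde B_0$ with $l(B_0)=l(B)^\alpha$ — is to perform the inside/outside splitting relative to $\frac65\widetilde B_0$ instead of $\frac65\widetilde B$, so that on the complement the constraint is automatically met; one then has to carefully track how the passage from radius $l$ to radius $l^\alpha$ interacts with the doubling measure estimates and the $K_{B,Q}$ factors (using the upper doubling property $(\ref{1.2})$ and $(\ref{1.3})$ to compare $\lambda$ at the two scales), and verify that the extra $d(x,y_j)^{-\delta/\alpha}$ decay is exactly enough to make the annular sum converge with a bound of the right form. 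Controlling the commutator maps $[b_i,T]$ on the local pieces also requires a Kolmogorov-type argument since only weak-type bounds on $T$ are assumed, and one must check the exponent $\delta<1/2$ is small enough for this to go through simultaneously with the two-fold commutator structure. The single-commutator estimates $(\ref{2.12})$, $(\ref{2.13})$, $(\ref{2.15})$, $(\ref{2.16})$ follow from the same scheme with one fewer RBMO factor, so I would prove the iterated estimate in detail and indicate the obvious simplifications for the rest.
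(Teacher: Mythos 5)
Your plan follows the paper's proof essentially exactly: the pointwise identity you write for $[b_1,b_2,T]$ with $\lambda_i=m_{\widetilde B}(b_i)$ is a rearrangement of the one the authors use (after noting $T(f_1,(b_2-\lambda_2)f_2)=-[b_2,T](f_1,f_2)+(b_2(z)-\lambda_2)T(f_1,f_2)$ the two forms are identical), and the local/far split, Kolmogorov for the local-local piece, H\"older with Lemma~\ref{lem2.2} for the $(b_i-\lambda_i)$ factors, kernel regularity for the far pieces, and the dilated balls $B_0,Q_0,\widetilde B_0$ with $l(\cdot_0)=l(\cdot)^\alpha$ in the small-ball case all match. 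The one point to correct in step~(2): Kolmogorov applied to the local-local term produces the $M_{p_1,(5)}f_1(x)\,M_{p_2,(5)}f_2(x)$ product, \emph{not} the $M_{r,(6)}\bigl(T(f_1\chi_{\frac65\widetilde B},f_2\chi_{\frac65\widetilde B})\bigr)$ and $M_{r,(6)}\bigl([b_i,T](\cdots)\bigr)$ terms; those arise only in the oscillation estimate for $|h_B-h_Q|$, where they come from H\"older on the piece ($F_{12}$ in the paper) measuring the change of constant from $m_{\widetilde B}(b_i)$ to $m_Q(b_i)$ inside $\frac65\widetilde B$ --- in particular, no weak-type bound for the commutators $[b_i,T]$ themselves is needed or assumed.
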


\begin{proof}
Because $L^{\infty}(\mu)$ with bounded support is dense in
$L^{p}(\mu)$ for $1<p<+\infty$, we only consider the situation of
$f_{1},f_{2}\in L^{\infty}(\mu)$ with bounded support. Also, by
Corollary 3.11 in \cite{T2}, without loss of generality, we can assume
that $b_{1},b_{2}\in L^{\infty}(\mu)$. Next we divide two cases for proving the result.

{\bf Case 1: $l(B)=l\geq 1$}. As in the proof of Theorem 9.1 in \cite{T2}, to obtain (\ref{2.11}), it suffices to show that
\begin{equation}\label{2.17}
\begin{split}
&\biggl(\frac{1}{\mu(6B)}\int_{B}||[b_{1},b_{2},T](f_{1},f_{2})(z)|^{\delta}-|h_{B}|^{\delta}|d\mu(z)\biggr
)^{1/\delta}\\
\leq &C||b_{1}||_{\ast}||b_{2}||_{\ast}M_{r,(6)}(T(f_{1},f_{2}))(x)
+C||b_{1}||_{\ast}M_{r,(6)}([b_{2},T](f_{1},f_{2}))(x)\\
&+C||b_{2}||_{\ast}M_{r,(6)}([b_{1},T](f_{1},f_{2}))(x)+C||b_{1}||_{\ast}||b_{2}||_{\ast}M_{p_{1},(5)}f_{1}(x)M_{p_{2},(5)}f_{2}(x),
\end{split}
\end{equation}
 holds for
any $x$ and ball $B$ with $x\in B$, and
\begin{equation}\label{2.18}
\begin{split}
|h_{B}-h_{Q}|\leq&
CK_{B,Q}^{4}\biggr[||b_{1}||_{\ast}||b_{2}||_{\ast}M_{r,(6)}(T(f_{1},f_{2}))(x)
+||b_{1}||_{\ast}M_{r,(6)}([b_{2},T](f_{1},f_{2}))(x)\\
&+||b_{2}||_{\ast}M_{r,(6)}([b_{1},T](f_{1},f_{2}))(x)
+||b_{1}||_{\ast}||b_{2}||_{\ast}M_{p_{1},(5)}f_{1}(x)M_{p_{2},(5)}f_{2}(x)\\
&+||b_{1}||_{\ast}||b_{2}||_{\ast}M_{r,(6)}(T(f_{1}\chi_{\frac{6}{5}\tilde{B}},f_{2}\chi_{\frac{6}{5}\tilde{B}}))(x)
+||b_{1}||_{\ast}M_{r,(6)}([b_{2},T](f_{1}\chi_{\frac{6}{5}\tilde{B}},f_{2}\chi_{\frac{6}{5}\tilde{B}}))(x)\\
&+||b_{2}||_{\ast}M_{r,(6)}([b_{1},T](f_{1}\chi_{\frac{6}{5}\tilde{B}},f_{2}\chi_{\frac{6}{5}\tilde{B}}))(x)\biggr]
\end{split}
\end{equation}
for all balls $B\subset Q$ with $x\in B$, where $B$ is an arbitrary
ball, $Q$ is a doubling ball, $\widetilde{B}$ is the smallest $(\alpha,\beta)$-doubling ball of
the form $6^{k}B$ with $k\in {\mathbf{N}}\bigcup\{0\}$. We denote
\begin{equation*}
\begin{split}
h_{B}:=& m_{B}\biggl[T\biggl((b_{1}-
m_{\tilde{B}}(b_{1}))f_{1}\chi_{\frac{6}{5}B},(b_{2}-m_{\tilde{B}}(b_{2}))f_{2}\chi_{X\backslash\frac{6}{5}B}\biggr)\\
&\ \ +T\biggl((b_{1}-
m_{\tilde{B}}(b_{1}))f_{1}\chi_{X\backslash\frac{6}{5}B},(b_{2}-m_{\tilde{B}}(b_{2}))f_{2}\chi_{\frac{6}{5}B}\biggr)\\
&\ \ +T\biggl((b_{1}-
m_{\tilde{B}}(b_{1}))f_{1}\chi_{X\backslash\frac{6}{5}B},(b_{2}-m_{\tilde{B}}(b_{2}))f_{2}\chi_{X\backslash\frac{6}{5}B}\biggr)\biggr]
\end{split}
\end{equation*}
and
\begin{equation*}
\begin{split}
h_{Q}:=& m_{Q}\biggl[T\biggl((b_{1}-
m_{Q}(b_{1}))f_{1}\chi_{\frac{6}{5}Q},(b_{2}-m_{Q}(b_{2}))f_{2}\chi_{X\backslash\frac{6}{5}Q}\biggr)\\
&\ \ +T\biggl((b_{1}-
m_{Q}(b_{1}))f_{1}\chi_{X\backslash\frac{6}{5}Q},(b_{2}-m_{Q}(b_{2}))f_{2}\chi_{\frac{6}{5}Q}\biggr)\\
&\ \ +T\biggl((b_{1}-
m_{Q}(b_{1}))f_{1}\chi_{X\backslash\frac{6}{5}Q},(b_{2}-m_{Q}(b_{2}))f_{2}\chi_{X\backslash\frac{6}{5}Q}\biggr)\biggr].
\end{split}
\end{equation*}
It is easy to see that
$$[b_{1},b_{2},T]=T((b_{1}-b_{1}(z))f_{1},(b_{2}-b_{2}(z))f_{2})$$
and
\begin{equation*}
\begin{split}
&T((b_{1}-m_{\tilde{B}}(b_{1}))f_{1},(b_{2}-m_{\tilde{B}}(b_{2}))f_{2})\\
=&T((b_{1}-b_{1}(z)+b_{1}(z)-m_{\tilde{B}}(b_{1}))f_{1},(b_{2}-b_{2}(z)+b_{2}(z)-m_{\tilde{B}}(b_{2}))f_{2})\\
=&(b_{1}(z)-m_{\tilde{B}}(b_{1}))(b_{2}(z)-m_{\tilde{B}}(b_{2}))T(f_{1},f_{2})\\
&\   +(b_{1}(z)-m_{\tilde{B}}(b_{1}))T(f_{1},(b_{2}-b_{2}(z))f_{2})\\
&\
+(b_{2}(z)-m_{\tilde{B}}(b_{2}))T((b_{1}-b_{1}(z))f_{1},f_{2})+T((b_{1}-b_{1}(z))f_{1},(b_{2}-b_{2}(z))f_{2}).
\end{split}
\end{equation*}
Then
\begin{equation*}
\begin{split}
&\biggl(\frac{1}{\mu(6B)}\int_{B}||[b_{1},b_{2},T](f_{1},f_{2})(z)|^{\delta}-|h_{B}|^{\delta}|d\mu(z)\biggr
)^{1/\delta}\\
\leq&
C\biggl(\frac{1}{\mu(6B)}\int_{B}|[b_{1},b_{2},T](f_{1},f_{2})(z)-h_{B}|^{\delta}d\mu(z)\biggr)^{1/\delta}\\
\leq&
C\biggl(\frac{1}{\mu(6B)}\int_{B}|(b_{1}(z)-m_{\tilde{B}}(b_{1}))(b_{2}(z)-m_{\tilde{B}}(b_{2}))T(f_{1},f_{2})(z)|^{\delta}d\mu(z)\biggr)^{1/\delta}\\
&\ \ +C\biggl(\frac{1}{\mu(6B)}\int_{B}|(b_{1}(z)-m_{\tilde{B}}(b_{1}))T(f_{1},(b_{2}-b_{2}(z))f_{2})(z)|^{\delta}d\mu(z)\biggr)^{1/\delta}\\
&\ \ +C\biggl(\frac{1}{\mu(6B)}\int_{B}|(b_{2}(z)-m_{\tilde{B}}(b_{2}))T((b_{1}-b_{1}(z))f_{1},f_{2})(z)|^{\delta}d\mu(z)\biggr)^{1/\delta}\\
&\ \
+C\biggl(\frac{1}{\mu(6B)}\int_{B}|T((b_{1}-m_{\tilde{B}}(b_{1}))f_{1},(b_{2}-m_{\tilde{B}}(b_{2}))f_{2})(z)-h_{B}|^{\delta}d\mu(z)\biggr)^{1/\delta}\\
=&:E_{1}+E_{2}+E_{3}+E_{4}.
\end{split}
\end{equation*}

We first estimate $E_{1}$. Choosing $r_{1},r_{2}>1$ such that
$\dfrac{1}{r}+\dfrac{1}{r_{1}}+\dfrac{1}{r_{2}}=\dfrac{1}{\delta}$.
By H\"{o}lder's inequality and Lemma \ref{lem2.2}, it follows

\begin{equation}\label{2.81}
\begin{split}
E_{1}&\leq
C\biggl(\frac{1}{\mu(6B)}\int_{B}|b_{1}(z)-m_{\tilde{B}}(b_{1})|^{r_{1}}d\mu(z)\biggr)^{1/r_{1}}\\
&\ \ \times\biggl(\frac{1}{\mu(6B)}\int_{B}|b_{2}(z)-m_{\tilde{B}}(b_{2})|^{r_{2}}d\mu(z)\biggr)^{1/r_{2}}\\
&\ \ \times\biggl(\frac{1}{\mu(6B)}\int_{B}|T(f_{1},f_{2})|^{r}d\mu(z)\biggr)^{1/r}\\
&\leq C||b_{1}||_{\ast}||b_{2}||_{\ast}M_{r,(6)}(T(f_{1},f_{2}))(x).
\end{split}
\end{equation}

For $E_{2}$, choosing $s>1$ such that
$\dfrac{1}{s}+\dfrac{1}{r}=\dfrac{1}{\delta}$, by H\"{o}lder's
inequality and Lemma \ref{lem2.2}, we obtain

\begin{equation}\label{2.91}
\begin{split}
E_{2}&\leq
C\biggl(\frac{1}{\mu(6B)}\int_{B}|b_{1}(z)-m_{\tilde{B}}(b_{1})|^{s}d\mu(z)\biggr)^{1/s}\\
&\ \ \times\biggl(\frac{1}{\mu(6B)}\int_{B}|[b_{2},T](f_{1},f_{2})|^{r}d\mu(z)\biggr)^{1/r}\\
&\leq C||b_{1}||_{\ast}M_{r,(6)}([b_{2},T](f_{1},f_{2}))(x).
\end{split}
\end{equation}

Similar to estimate $E_{2}$, we immediately have
\begin{equation}\label{2.92}
E_{3}\leq C||b_{2}||_{\ast}M_{r,(6)}([b_{1},T](f_{1},f_{2}))(x).
\end{equation}

Let us turn to estimate $E_{4}$. Denote
$f_{j}^{1}=f_{j}\chi_{\frac{6}{5}B}$ and $f_{j}^{2}=f_{j}-f_{j}^{1}$
for $j=1,2$, we have
\begin{equation*}
\begin{split}
&|T((b_{1}-m_{\tilde{B}}(b_{1}))f_{1},(b_{2}-m_{\tilde{B}}(b_{2}))f_{2})(z)-h_{B}|\\
\leq&|T((b_{1}-m_{\tilde{B}}(b_{1}))f^{1}_{1},(b_{2}-m_{\tilde{B}}(b_{2}))f^{1}_{2})(z)|\\
&\ \ +|T((b_{1}-m_{\tilde{B}}(b_{1}))f^{1}_{1},(b_{2}-m_{\tilde{B}}(b_{2}))f^{2}_{2})(z)
-m_{B}[T((b_{1}-m_{\tilde{B}}(b_{1}))f^{1}_{1},(b_{2}-m_{\tilde{B}}(b_{2}))f^{2}_{2})]|\\
&\ \ +|T((b_{1}-m_{\tilde{B}}(b_{1}))f^{2}_{1},(b_{2}-m_{\tilde{B}}(b_{2}))f^{1}_{2})(z)
-m_{B}[T((b_{1}-m_{\tilde{B}}(b_{1}))f^{2}_{1},(b_{2}-m_{\tilde{B}}(b_{2}))f^{1}_{2})]|\\
&\ \ +|T((b_{1}-m_{\tilde{B}}(b_{1}))f^{2}_{1},(b_{2}-m_{\tilde{B}}(b_{2}))f^{2}_{2})(z)
-m_{B}[T((b_{1}-m_{\tilde{B}}(b_{1}))f^{2}_{1},(b_{2}-m_{\tilde{B}}(b_{2}))f^{2}_{2})]|\\
=&:E_{41}(z)+E_{42}(z)+E_{43}(z)+E_{44}(z).
\end{split}
\end{equation*}
Then
\begin{equation*}
\begin{split}
E_{4}&\leq
C\biggl(\frac{1}{\mu(6B)}\int_{B}E_{41}(z)^{\delta}d\mu(z)\biggr)^{1/\delta}
+C\biggl(\frac{1}{\mu(6B)}\int_{B}E_{42}(z)^{\delta}d\mu(z)\biggr)^{1/\delta}\\
&\ \ +
C\biggl(\frac{1}{\mu(6B)}\int_{B}E_{43}(z)^{\delta}d\mu(z)\biggr)^{1/\delta}
+
C\biggl(\frac{1}{\mu(6B)}\int_{B}E_{44}(z)^{\delta}d\mu(z)\biggr)^{1/\delta}\\
&=:E_{41}+E_{42}+E_{43}+E_{44}.
\end{split}
\end{equation*}

To estimate $E_{41}$, we need the classical Kolmogorov's theorem:
Let $(X,\mu)$ be a probability measure space and let $0<p<q<\infty$,
then there exists a constant $C>0$, such that $||f||_{L^{p}(\mu)}\le
C||f||_{L^{q,\infty}(\mu)}$ for any measurable function $f$. Let
$p=\delta$ and $q=1/2$ such that $0<\delta<1/2$. Using Kolmogorov's
theorem, Lemma \ref{lem2.2} and H\"{o}lder's inequality, we obtain
\begin{equation*}
\begin{split}
E_{41}\leq&
C||T((b_{1}-m_{\tilde{B}}(b_{1}))f_{1}^{1},(b_{2}-m_{\tilde{B}}(b_{2}))f_{2}^{1})||_{L^{1/2,\infty}(\frac{6}{5}B,\frac{d\mu(z)}{\mu(6B)})}\\
\leq&
C\frac{1}{\mu(6B)}\int_{\frac{6}{5}B}|(b_{1}(z)-m_{\tilde{B}}(b_{1}))f_{1}(z)|d\mu(z)\\
&\ \ \times\frac{1}{\mu(6B)}\int_{\frac{6}{5}B}|(b_{2}(z)-m_{\tilde{B}}(b_{2}))f_{2}(z)|d\mu(z)\\
\leq&
C(\frac{1}{\mu(6B)}\int_{\frac{6}{5}B}|b_{1}(z)-m_{\tilde{B}}(b_{1})|^{p'_{1}}d\mu(z))^{1/p'_{1}}\\
&\ \ \times(\frac{1}{\mu(6B)}\int_{\frac{6}{5}B}|f_{1}(z)|^{p_{1}}d\mu(z))^{1/p_{1}}\\
&\ \ \times(\frac{1}{\mu(6B)}\int_{\frac{6}{5}B}|b_{2}(z)-m_{\tilde{B}}(b_{2})|^{p'_{2}}d\mu(z))^{1/p'_{2}}\\
&\ \ \times(\frac{1}{\mu(6B)}\int_{\frac{6}{5}B}|f_{2}(z)|^{p_{2}}d\mu(z))^{1/p_{2}}\\
\leq&
C||b_{1}||_{\ast}||b_{2}||_{\ast}M_{p_{1},(5)}f_{1}(x)M_{p_{2},(5)}f_{2}(x).
\end{split}
\end{equation*}

To compute $E_{42}$, let $z,y\in B$, $y_{1}\in \frac{6}{5}B$ and $y_{2}\in X\backslash \frac{6}{5}B$, then $\max\limits_{1\leq i\leq
2}d(z,y_{i})\geq d(z,y_{2})\geq Cl(B)\geq Cl(B)^{\alpha}\geq Cd(z,y)^{\alpha}$.  By Definition \ref{def1.6}, Lemma \ref{lem2.2}, Lemma \ref{lem2.3},
H\"{o}lder's inequality and the properties of $\lambda$, it follows

\begin{equation*}
\begin{split}
&|T((b_{1}-m_{\tilde{B}}(b_{1}))f^{1}_{1},(b_{2}-m_{\tilde{B}}(b_{2}))f^{2}_{2})(z)
-T((b_{1}-m_{\tilde{B}}(b_{1}))f^{1}_{1},(b_{2}-m_{\tilde{B}}(b_{2}))f^{2}_{2})(y)|\\
\leq&C\int_{X\backslash\frac{6}{5}B}\int_{\frac{6}{5}B}|K(z,y_1,y_2)-K(y,y_1,y_2)| |\prod_{i=1}^{2}(b_{i}(y_{i})-m_{\tilde{B}}(b_i))f_{i}(y_i)|d\mu(y_1)d\mu(y_2)\\
\leq &C\int_{X\backslash\frac{6}{5}B}\int_{\frac{6}{5}B}\frac{d(z,y)^{\delta}}
{\biggl[\sum\limits_{i=1}^{2}d(z,y_{i})\biggr]^{\delta/\alpha}\biggl[\sum\limits_{i=1}^{2}\lambda(z,d(z,y_{i}))\biggr]^{2}} \\ &\ \ \ \times|\prod_{i=1}^{2}(b_{i}(y_{i})-m_{\tilde{B}}(b_i))f_{i}(y_i)|d\mu(y_1)d\mu(y_2)\\
\leq& C\int_{\frac{6}{5}B}
\frac{|b_{1}(y_{1})-m_{\tilde{B}}(b_{1})||f_{1}(y_{1})|}{\lambda(z,d(z,y_{1}))}d\mu(y_{1})\\
&\ \ \ \times\int_{X\backslash\frac{6}{5}B}
\frac{d(z,y)^{\delta}}
{d(z,y_{2})^{\delta/\alpha}}\frac{|b_{2}(y_{2})-m_{\tilde{B}}(b_{2})||f_{2}(y_{2})|}{\lambda(z,d(z,y_{2}))}d\mu(y_{2})\\
\leq& C\frac{\mu(6B)}{\lambda(x_{B},\frac{6}{5}r_{B})}\frac{1}{\mu(6B)}\int_{\frac{6}{5}B}
|b_{1}(y_{1})-m_{\tilde{B}}(b_{1})||f_{1}(y_{1})|d\mu(y_{1})\\
&\ \ \ \times\sum_{k=1}^{\infty}\int_{6^{k}\frac{6}{5}B\backslash6^{k-1}\frac{6}{5}B}
\frac{d(z,y)^{\delta}}
{d(z,y_{2})^{\delta/\alpha}}\frac{|b_{2}(y_{2})-m_{\tilde{B}}(b_{2})||f_{2}(y_{2})|}{\lambda(z,d(z,y_{2}))}d\mu(y_{2})\\
\leq&
C(\frac{1}{\mu(6B)}\int_{\frac{6}{5}B}|b_{1}(y_{1})-m_{\tilde{B}}(b_{1})|^{p'_{1}}d\mu(y_{1}))^{1/p'_{1}}\\
&\ \ \ \times(\frac{1}{\mu(6B)}\int_{\frac{6}{5}B}|f_{1}(y_{1})|^{p_{1}}d\mu(y_{1}))^{1/p_{1}}\\
&\ \ \ \times
\sum_{k=1}^{\infty}\frac{1}{\lambda(x_{B},6^{k-1}\frac{6}{5}r_{B})}6^{-k\delta/\alpha}l^{\delta(1-1/\alpha)}
\int_{6^{k}\frac{6}{5}B}|b_{2}(y_{2})-m_{\tilde{B}}
(b_{2})||f_{2}(y_{2})|d\mu(y_{2})\\
\leq&
C||b_{1}||_{\ast}M_{p_{1},(5)}f_{1}(x)\sum_{k=1}^{\infty}6^{-k\delta/\alpha}
\frac{\mu(5\times6^{k}\frac{6}{5}B)}{\lambda(x_{B},6^{k-1}\frac{6}{5}r_{B})}
\frac{1}{\mu(5\times6^{k}\frac{6}{5}B)}\\
&\ \ \ \times\int_{6^{k}\frac{6}{5}B}
|b_{2}(y_{2})-m_{\widetilde{6^{k}\frac{6}{5}B}}(b_{2})+m_{\widetilde{6^{k}\frac{6}{5}B}}(b_{2})-m_{\tilde{B}}(b_{2})||f_{2}(y_{2})|d\mu(y_{2})\\
\leq&
C||b_{1}||_{\ast}M_{p_{1},(5)}f_{1}(x)\sum_{k=1}^{\infty}6^{-k\delta/\alpha}\\
&\ \ \ \times\biggl[\biggl(\frac{1}{\mu(5\times6^{k}\frac{6}{5}B)}
\int_{6^{k}\frac{6}{5}B}|b_{2}(y_{2})-m_{\widetilde{6^{k}\frac{6}{5}B}}(b_{2})|^{p'_{2}}d\mu(y_{2})\biggr)^{1/p'_{2}}\\
&\ \ \ \times
\biggl(\frac{1}{\mu(5\times6^{k}\frac{6}{5}B)}\int_{6^{k}\frac{6}{5}B}|f_{2}(y_{2})|^{p_{2}}d\mu(y_{2})\biggr)^{1/p_{2}}\\
&\ \ \ +C
k||b_{2}||_{\ast}\frac{1}{\mu(5\times6^{k}\frac{6}{5}B)}\int_{6^{k}\frac{6}{5}B}|f_{2}(y_{2})|d\mu(y_{2})\biggr]\\
\leq&
C||b_{1}||_{\ast}||b_{2}||_{\ast}M_{p_{1},(5)}f_{1}(x)M_{p_{2},(5)}f_{2}(x).\\
\end{split}
\end{equation*}
Taking the mean over $y\in B$, we have
\begin{equation*}
E_{42}(z)\leq
C||b_{1}||_{\ast}||b_{2}||_{\ast}M_{p_{1},(5)}f_{1}(x)M_{p_{2},(5)}f_{2}(x).
\end{equation*}
Therefore
\begin{equation*}
E_{42}\leq
C||b_{1}||_{\ast}||b_{2}||_{\ast}M_{p_{1},(5)}f_{1}(x)M_{p_{2},(5)}f_{2}(x).
\end{equation*}

Similarly, we get
\begin{equation*}
E_{43}\leq
C||b_{1}||_{\ast}||b_{2}||_{\ast}M_{p_{1},(5)}f_{1}(x)M_{p_{2},(5)}f_{2}(x).
\end{equation*}

For $E_{44}$, by Definition \ref{def1.6}, Lemma \ref{lem2.2},
H\"{o}lder's inequality and the properties of $\lambda$, we obtain
\begin{equation*}
\begin{split}
&|T((b_{1}-m_{\tilde{B}}(b_{1}))f^{2}_{1},(b_{2}-m_{\tilde{B}}(b_{2}))f^{2}_{2})(z)
-T((b_{1}-m_{\tilde{B}}(b_{1}))f^{2}_{1},(b_{2}-m_{\tilde{B}}(b_{2}))f^{2}_{2})(y)|\\
\leq &C\int_{X\backslash
\frac{6}{5}B}\int_{X\backslash\frac{6}{5}B}|K(z,y_{1},y_{2})-K(y,y_{1},y_{2})|\\
&\ \ \ \times|\prod_{i=1}^{2}(b_{i}(y_{i})-m_{\tilde{B}}(b_{i}))f_{i}(y_{i})|d\mu(y_{1})d\mu(y_{2})\\
\leq &C\int_{X\backslash
\frac{6}{5}B}\int_{X\backslash\frac{6}{5}B}\frac{d(z,y)^{\delta}
|\prod_{i=1}^{2}(b_{i}(y_{i})-m_{\tilde{B}}(b_{i})f_{i}(y_{i})|d\mu(y_{1})d\mu(y_{2})}
{(d(z,y_{1})+d(z,y_{2}))^{\delta/\alpha}[\sum\limits_{j=1}^{2}\lambda(z,d(z,y_{j}))]^{2}}\\
\leq &C\prod_{i=1}^{2}\int_{X\backslash
\frac{6}{5}B}\frac{d(z,y)^{\delta_{i}}
|b_{i}(y_{i})-m_{\tilde{B}}(b_{i})||f_{i}(y_{i})|d\mu(y_{i})}{d(z,y_{i})^{\delta_{i}/\alpha}\lambda(z,d(z,y_{i}))}\\
\leq
&C\prod_{i=1}^{2}\sum\limits_{k=1}^{\infty}6^{-k\delta_{i}/\alpha}l^{\delta(1-1/\alpha)}\frac{\mu(5\times6^{k}\frac{6}{5}B)}
{\lambda(x_{B},6^{k-1}\frac{6}{5}r_{B})}\frac{1}{\mu(5\times6^{k}\frac{6}{5}B)}\\
&\ \ \times\int_{6^{k}\frac{6}{5}B}|b_{i}(y_{i})-m_{\tilde{B}}(b_{i})||f_{i}(y_i)|d\mu(y_{i})\\
\leq
&C\prod_{i=1}^{2}\sum_{k=1}^{\infty}6^{-k\delta_{i}/\alpha}(\frac{1}{\mu(5\times6^{k}\frac{6}{5}B)}
\int_{6^{k}\frac{6}{5}B}|b_{i}(y_{i})-m_{\tilde{B}}(b_{i})|^{p'_{i}}d\mu(y_{i}))^{1/p'_{i}}\\
&\ \ \ \times(\frac{1}{\mu(5\times6^{k}\frac{6}{5}B)}
\int_{6^{k}\frac{6}{5}B}|f_{i}(y_i)|^{p_{i}})^{1/p_{i}}\\
\leq
&C\prod_{i=1}^{2}\sum_{k=1}^{\infty}6^{-k\delta_{i}/\alpha}M_{p_{i},(5)}f_{i}(x)(\frac{1}{\mu(5\times6^{k}\frac{6}{5}B)}
\int_{6^{k}\frac{6}{5}B}|b_{i}(y_{i})-m_{\widetilde{6^{k}\frac{6}{5}B}}(b_i)\\
&\ \ \ +m_{\widetilde{6^{k}\frac{6}{5}B}}(b_{i})-m_{\tilde{B}}(b_{i})|^{p'_{i}}d\mu(y_{i}))^{1/p'_{i}}\\
\leq
&C\prod_{i=1}^{2}\sum_{k=1}^{\infty}k6^{-k\delta_{i}/\alpha}||b_{i}||_{\ast}M_{p_{i},(5)}f_{i}(x)\\
\leq&
C||b_{1}||_{\ast}||b_{2}||_{\ast}M_{p_{1},(5)}f_{1}(x)M_{p_{2},(5)}f_{2}(x).
\end{split}
\end{equation*}
where $\delta_{1},\delta_{2}>0$ and $\delta_{1}+\delta_{2}=\delta$.

Taking the mean over $y\in B$, then
\begin{equation*}
E_{44}(z)\leq
C||b_{1}||_{\ast}||b_{2}||_{\ast}M_{p_{1},(5)}f_{1}(x)M_{p_{2},(5)}f_{2}(x).
\end{equation*}
Thus
\begin{equation*}
E_{44}\leq
C||b_{1}||_{\ast}||b_{2}||_{\ast}M_{p_{1},(5)}f_{1}(x)M_{p_{2},(5)}f_{2}(x).
\end{equation*}
So (\ref{2.17}) can be obtained.

Next we prove (\ref{2.18}). Consider two balls $B\subset Q$ with $x\in B$,
where $B$ is an arbitrary ball and $Q$ is a doubling ball. Recall that $\widetilde{B}$ is the smallest $(\alpha,\beta)$-doubling ball of
the form $6^{k}B$ with $k\in {\mathbf{N}}\bigcup\{0\}$. Now we have the two cases: $B\subset Q\subset\tilde{B}$ or $B\subset\tilde{B}\subset Q$.
By Lemma \ref{lem2.10}, we can obtain the following facts. If $B\subset Q\subset\tilde{B}$, we have $K_{Q,\tilde{B}}\leq CK_{B,\tilde{B}}\leq C$. If $B\subset\tilde{B}\subset Q$, we have $K_{\tilde{B},Q}\leq CK_{B,Q}$. Without loss of generality, we only consider $B\subset\tilde{B}\subset Q$ in this paper. The another case can be considered by the same method.

Let
$N=N_{\tilde{B},Q}+1$. It is easy to see that
\begin{equation*}
|h_{B}-h_{Q}|\leq |h_{B}-h_{\tilde{B}}|+|h_{\tilde{B}}-h_{Q}|.
\end{equation*}
Now we first consider $|h_{\tilde{B}}-h_{Q}|$. We write
\begin{equation*}
\begin{split}
|h_{\tilde{B}}-h_{Q}|\leq & \biggl|m_{\tilde{B}}\biggl[T\biggl((b_{1}-
m_{\tilde{B}}(b_{1}))f_{1}\chi_{\frac{6}{5}\tilde{B}},(b_{2}-m_{\tilde{B}}(b_{2}))f_{2}\chi_{X\backslash\frac{6}{5}\tilde{B}}\biggr)\\
&\ \ +T\biggl((b_{1}-
m_{\tilde{B}}(b_{1}))f_{1}\chi_{X\backslash\frac{6}{5}\tilde{B}},(b_{2}-m_{\tilde{B}}(b_{2}))f_{2}\chi_{\frac{6}{5}\tilde{B}}\biggr)\\
&\ \ +T\biggl((b_{1}-
m_{\tilde{B}}(b_{1}))f_{1}\chi_{X\backslash\frac{6}{5}\tilde{B}},(b_{2}-m_{\tilde{B}}(b_{2}))f_{2}\chi_{X\backslash\frac{6}{5}\tilde{B}}\biggr)\biggr]\\
&\ \ -m_{\tilde{B}}\biggl[T\biggl((b_{1}-
m_{Q}(b_{1}))f_{1}\chi_{\frac{6}{5}\tilde{B}},(b_{2}-m_{Q}(b_{2}))f_{2}\chi_{X\backslash\frac{6}{5}\tilde{B}}\biggr)\\
&\ \ +T\biggl((b_{1}-
m_{Q}(b_{1}))f_{1}\chi_{X\backslash\frac{6}{5}\tilde{B}},(b_{2}-m_{Q}(b_{2}))f_{2}\chi_{\frac{6}{5}\tilde{B}}\biggr)\\
&\ \ +T\biggl((b_{1}-
m_{Q}(b_{1}))f_{1}\chi_{X\backslash\frac{6}{5}\tilde{B}},(b_{2}-m_{Q}(b_{2}))f_{2}\chi_{X\backslash\frac{6}{5}\tilde{B}}\biggr)\biggr]\biggr|\\
&\ \ +\biggl|m_{\tilde{B}}\biggl[T\biggl((b_{1}-
m_{Q}(b_{1}))f_{1}\chi_{\frac{6}{5}\tilde{B}},(b_{2}-m_{Q}(b_{2}))f_{2}\chi_{X\backslash\frac{6}{5}\tilde{B}}\biggr)\\
&\ \ +T\biggl((b_{1}-
m_{Q}(b_{1}))f_{1}\chi_{X\backslash\frac{6}{5}\tilde{B}},(b_{2}-m_{Q}(b_{2}))f_{2}\chi_{\frac{6}{5}\tilde{B}}\biggr)\\
&\ \ +T\biggl((b_{1}-
m_{Q}(b_{1}))f_{1}\chi_{X\backslash\frac{6}{5}\tilde{B}},(b_{2}-m_{Q}(b_{2}))f_{2}\chi_{X\backslash\frac{6}{5}\tilde{B}}\biggr)\biggr]\\
&\ \ -m_{Q}\biggl[T\biggl((b_{1}-
m_{Q}(b_{1}))f_{1}\chi_{\frac{6}{5}Q},(b_{2}-m_{Q}(b_{2}))f_{2}\chi_{X\backslash\frac{6}{5}Q}\biggr)\\
&\ \ +T\biggl((b_{1}-
m_{Q}(b_{1}))f_{1}\chi_{X\backslash\frac{6}{5}Q},(b_{2}-m_{Q}(b_{2}))f_{2}\chi_{\frac{6}{5}Q}\biggr)\\
&\ \ +T\biggl((b_{1}-
m_{Q}(b_{1}))f_{1}\chi_{X\backslash\frac{6}{5}Q},(b_{2}-m_{Q}(b_{2}))f_{2}\chi_{X\backslash\frac{6}{5}Q}\biggr)\biggr]\biggr|\\
=:&F_{1}+F_{2}.
\end{split}
\end{equation*}

To estimate $F_{1}$, recall the fact that
\begin{equation*}
T(f_1,f_2)=T(f_1\chi_{\frac{6}{5}\tilde{B}},f_2\chi_{\frac{6}{5}\tilde{B}})+T(f_1\chi_{\frac{6}{5}\tilde{B}},f_2\chi_{X\backslash\frac{6}{5}\tilde{B}})
+T(f_1\chi_{X\backslash\frac{6}{5}\tilde{B}},f_2\chi_{\frac{6}{5}\tilde{B}})+T(f_1\chi_{X\backslash\frac{6}{5}\tilde{B}},f_2\chi_{X\backslash\frac{6}{5}\tilde{B}}),
\end{equation*}
then
\begin{equation*}
\begin{split}
F_{1}&\leq \biggl|m_{\tilde{B}}\biggl[T((b_{1}-m_{\tilde{B}}(b_{1}))f_{1},(b_{2}-m_{\tilde{B}}(b_{2}))f_{2})-
T((b_{1}-m_{\tilde{B}}(b_{1}))f_{1}\chi_{\frac{6}{5}\tilde{B}},(b_{2}-m_{\tilde{B}}(b_{2}))f_{2}\chi_{\frac{6}{5}\tilde{B}})\biggr]\\
&\ \ -m_{\tilde{B}}\biggl[T((b_{1}-m_{Q}(b_{1}))f_{1},(b_{2}-m_{Q}(b_{2}))f_{2})-
T((b_{1}-m_{Q}(b_{1}))f_{1}\chi_{\frac{6}{5}\tilde{B}},(b_{2}-m_{Q}(b_{2}))f_{2}\chi_{\frac{6}{5}\tilde{B}})\biggr]\biggr|\\
&\leq  \biggl|m_{\tilde{B}}\biggl[T((b_{1}-m_{\tilde{B}}(b_{1}))f_{1},(b_{2}-m_{\tilde{\tilde{B}}}(b_{2}))f_{2})
-T((b_{1}-m_{Q}(b_{1}))f_{1},(b_{2}-m_{Q}(b_{2}))f_{2})\biggr]\biggr|\\
&\ \ +\biggl|m_{\tilde{B}}\biggl[T((b_{1}-m_{\tilde{B}}(b_{1}))f_{1}\chi_{\frac{6}{5}\tilde{B}},(b_{2}-m_{\tilde{B}}(b_{2}))f_{2}\chi_{\frac{6}{5}\tilde{B}})\\
&\ \ -T((b_{1}-m_{Q}(b_{1}))f_{1}\chi_{\frac{6}{5}\tilde{B}},(b_{2}-m_{Q}(b_{2}))f_{2}\chi_{\frac{6}{5}\tilde{B}})\biggr]\biggr|\\
&=:F_{11}+F_{12}.
\end{split}
\end{equation*}

For $F_{11}$, recall the fact that
\begin{equation*}
\begin{split}
&T((b_{1}-m_{\tilde{B}}(b_{1}))f_{1},(b_{2}-m_{\tilde{B}}(b_{2}))f_{2})\\
=&T((b_{1}-b_{1}(z)+b_{1}(z)-m_{\tilde{B}}(b_{1}))f_{1},(b_{2}-b_{2}(z)+b_{2}(z)-m_{\tilde{B}}(b_{2}))f_{2})\\
=&(b_{1}(z)-m_{\tilde{B}}(b_{1}))(b_{2}(z)-m_{\tilde{B}}(b_{2}))T(f_{1},f_{2})\\
&\   +(b_{1}(z)-m_{\tilde{B}}(b_{1}))T(f_{1},(b_{2}-b_{2}(z))f_{2})\\
&\
+(b_{2}(z)-m_{\tilde{B}}(b_{2}))T((b_{1}-b_{1}(z))f_{1},f_{2})+T((b_{1}-b_{1}(z))f_{1},(b_{2}-b_{2}(z))f_{2})\\
=&(b_{1}(z)-m_{\tilde{B}}(b_{1}))(b_{2}(z)-m_{\tilde{B}}(b_{2}))T(f_{1},f_{2}) -(b_{1}(z)-m_{\tilde{B}}(b_{1}))[b_2,T](f_1,f_2)\\
&\ -(b_{2}(z)-m_{\tilde{B}}(b_{2}))[b_1,T](f_1,f_2)+[b_1,b_2,T](f_1,f_2).
\end{split}
\end{equation*}
Then
\begin{equation*}
\begin{split}
F_{11}\leq &\biggl|m_{\tilde{B}}[(b_{1}(z)-m_{\tilde{B}}(b_{1}))(b_{2}(z)-m_{\tilde{B}}(b_{2}))T(f_{1},f_{2})]\biggr|\\
&\ +\biggl|m_{\tilde{B}}[(b_{1}(z)-m_{\tilde{B}}(b_{1}))[b_2,T](f_1,f_2)]\biggr|\\
&\ +\biggl|m_{\tilde{B}}[(b_{2}(z)-m_{\tilde{B}}(b_{2}))[b_1,T](f_1,f_2)]\biggr|\\
&\ +\biggl|m_{\tilde{B}}[(b_{1}(z)-m_{Q}(b_{1}))(b_{2}(z)-m_{Q}(b_{2}))T(f_{1},f_{2})]\biggr|\\
&\ +\biggl|m_{\tilde{B}}[(b_{1}(z)-m_{Q}(b_{1}))[b_2,T](f_1,f_2)]\biggr|\\
&\ +\biggl|m_{\tilde{B}}[(b_{2}(z)-m_{Q}(b_{2}))[b_1,T](f_1,f_2)]\biggr|\\
=&:J_{1}+J_{2}+J_{3}+J_{4}+J_{5}+J_{6}.
\end{split}
\end{equation*}

Recall that $K_{\tilde{B},Q}\leq CK_{B,Q}$ with $B\subset \tilde{B} \subset Q$. Note that $\tilde{B}$ is a doubling ball, similar to estimate for (\ref{2.81}), (\ref{2.91}) and (\ref{2.92}) respcetively, we immediately obtain
\begin{equation*}
\begin{split}
J_{1}+J_{4}&\leq CK^{2}_{B,Q}||b_{1}||_{\ast}||b_{2}||_{\ast}M_{r,(6)}(T(f_{1},f_{2}))(x),\\
J_{2}+J_{5}&\leq CK_{B,Q}||b_{1}||_{\ast}M_{r,(6)}([b_{2},T](f_{1},f_{2}))(x),\\
J_{3}+J_{6}&\leq CK_{B,Q}||b_{2}||_{\ast}M_{r,(6)}([b_{1},T](f_{1},f_{2}))(x).\\
\end{split}
\end{equation*}

For $F_{12}$, with the same method to estimate $F_{11}$, we have
\begin{equation*}
\begin{split}
F_{12}&\leq CK^{2}_{B,Q}||b_{1}||_{\ast}||b_{2}||_{\ast}M_{r,(6)}(T(f_{1}\chi_{\frac{6}{5}\tilde{B}},f_{2}\chi_{\frac{6}{5}\tilde{B}}))(x)\\
&\ \ +CK_{B,Q}||b_{1}||_{\ast}M_{r,(6)}([b_{2},T](f_{1}\chi_{\frac{6}{5}\tilde{B}},f_{2}\chi_{\frac{6}{5}\tilde{B}}))(x)\\
&\ \ +CK_{B,Q}||b_{2}||_{\ast}M_{r,(6)}([b_{1},T](f_{1}\chi_{\frac{6}{5}\tilde{B}},f_{2}\chi_{\frac{6}{5}\tilde{B}}))(x).\\
\end{split}
\end{equation*}
Combining the estimates of $F_{11}$ and $F_{12}$, we complete the estimate for $F_{1}$.

 Now we turn to estimate $F_{2}$. By decomposing the region of the integral, we have

\begin{equation*}
\begin{split}
F_{2}\leq &|m_{\tilde{B}}\{T((b_{1}-m_{Q}(b_{1}))f_{1}\chi_{\frac{6}{5}\tilde{B}},(b_{2}-m_{Q}(b_{2}))f_{2}\chi_{6^{N}\tilde{B}\backslash\frac{6}{5}\tilde{B}})\}|\\
&+|m_{\tilde{B}}\{T((b_{1}-m_{Q}(b_{1}))f_{1}\chi_{6^{N}\tilde{B}\backslash\frac{6}{5}\tilde{B}},(b_{2}-m_{Q}(b_{2}))f_{2}\chi_{\frac{6}{5}\tilde{B}})\}|\\
&+|m_{\tilde{B}}\{T((b_{1}-m_{Q}(b_{1}))f_{1}\chi_{6^{N}\tilde{B}\backslash\frac{6}{5}\tilde{B}},(b_{2}-m_{Q}(b_{2}))f_{2}\chi_{6^{N}\tilde{B}\backslash\frac{6}{5}\tilde{B}})\}|\\
&+|m_{\tilde{B}}\{T((b_{1}-m_{Q}(b_{1}))f_{1}\chi_{6^{N}\tilde{B}},(b_{2}-m_{Q}(b_{2}))f_{2}\chi_{X\backslash 6^{N}\tilde{B}})\}\\
& -m_{Q}\{T((b_{1}-m_{Q}(b_{1}))f_{1}\chi_{6^{N}\tilde{B}},(b_{2}-m_{Q}(b_{2}))f_{2}\chi_{X\backslash6^{N}\tilde{B}})\}|\\
&+|m_{\tilde{B}}\{T((b_{1}-m_{Q}(b_{1}))f_{1}\chi_{X\backslash 6^{N}\tilde{B}},(b_{2}-m_{Q}(b_{2}))f_{2}\chi_{6^{N}\tilde{B}})\}\\
&-m_{Q}\{T((b_{1}-m_{Q}(b_{1}))f_{1}\chi_{X\backslash6^{N}\tilde{B}},(b_{2}-m_{Q}(b_{2}))f_{2}\chi_{6^{N}\tilde{B}})\}|\\
&+|m_{\tilde{B}}\{T((b_{1}-m_{Q}(b_{1}))f_{1}\chi_{X\backslash6^{N}\tilde{B}},(b_{2}-m_{Q}(b_{2}))f_{2}\chi_{X\backslash 6^{N}\tilde{B}})\}\\
&-m_{Q}\{T((b_{1}-m_{Q}(b_{1}))f_{1}\chi_{X\backslash6^{N}\tilde{B}},(b_{2}-m_{Q}(b_{2}))f_{2}\chi_{X\backslash 6^{N}\tilde{B}})\}|\\
&+|m_{Q}\{T((b_{1}-m_{Q}(b_{1}))f_{1}\chi_{\frac{6}{5}Q},(b_{2}-m_{Q}(b_{2}))f_{2}\chi_{6^{N}\tilde{B}\backslash\frac{6}{5}Q})\}|\\
&+|m_{Q}\{T((b_{1}-m_{Q}(b_{1}))f_{1}\chi_{6^{N}\tilde{B}\backslash\frac{6}{5}Q},(b_{2}-m_{Q}(b_{2}))f_{2}\chi_{\frac{6}{5}Q})\}|\\
&+|m_{Q}\{T((b_{1}-m_{Q}(b_{1}))f_{1}\chi_{6^{N}\tilde{B}\backslash\frac{6}{5}Q},(b_{2}-m_{Q}(b_{2}))f_{2}\chi_{6^{N}\tilde{B}\backslash\frac{6}{5}Q})\}|\\
=:&\sum_{i=1}^{9}F_{2i}.
\end{split}
\end{equation*}

For $F_{21}$, we first compute $T((b_{1}-m_{Q}(b_{1}))f_{1}\chi_{\frac{6}{5}\tilde{B}},(b_{2}-m_{Q}(b_{2}))f_{2}\chi_{6^{N}\tilde{B}\backslash\frac{6}{5}\tilde{B}})(z)$.
\begin{equation*}
\begin{split}
&|T((b_{1}-m_{Q}(b_{1}))f_{1}\chi_{\frac{6}{5}\tilde{B}},(b_{2}-m_{Q}(b_{2}))f_{2}\chi_{6^{N}\tilde{B}\backslash\frac{6}{5}\tilde{B}})(z)|\\
\leq& C\int_{6^{N}\tilde{B}\backslash\frac{6}{5}\tilde{B}}\int_{\frac{6}{5}\tilde{B}}
\frac{|(b_{1}(y_1)-m_{Q}(b_{1}))f_{1}(y_1)(b_{2}(y_2)-m_{Q}(b_{2}))f_{2}(y_2)|}{[\sum\limits_{i=1}^{2}\lambda(z,d(z,y_i))]^{2}}d\mu(y_1)d\mu(y_2)\\
\leq& C\int_{\frac{6}{5}\tilde{B}}\frac{|(b_{1}(y_1)-m_{Q}(b_{1}))f_{1}(y_1)|}{\lambda(z,d(z,y_1))}d\mu(y_1)
\biggl\{\sum_{k=1}^{N-1}\int_{6^{k+1}\tilde{B}\backslash6^{k}\tilde{B}}\frac{|(b_{2}(y_2)-m_{Q}(b_{2}))f_{2}(y_2)|}{\lambda(z,d(z,y_2))}d\mu(y_2)\\
&\ \ +\int_{6\tilde{B}\backslash\frac{6}{5}\tilde{B}}\frac{|(b_{2}(y_2)-m_{Q}(b_{2}))f_{2}(y_2)|}{\lambda(z,d(z,y_2))}d\mu(y_2)\biggr\}\\
\leq& C\frac{\mu(6\tilde{B})}{\lambda(x_{\tilde{B}},\frac{6}{5}r_{\tilde{B}})}\frac{1}{\mu(6\tilde{B})}
\int_{\frac{6}{5}\tilde{B}}|(b_{1}(y_1)-m_{Q}(b_{1}))f_{1}(y_1)|d\mu(y_1)\\
&\ \ \times\biggl\{ \sum_{k=1}^{N-1}\frac{\mu(5\times6^{k+1}\tilde{B})}{\lambda(x_{\tilde{B}},6^{k}r_{\tilde{B}})}
\frac{1}{\mu(5\times6^{k+1}\tilde{B})}\int_{6^{k+1}\tilde{B}}|(b_{2}(y_2)-m_{Q}(b_{2}))f_{2}(y_2)|d\mu(y_2)\\
&\ \ + \frac{\mu(5\times6\tilde{B})}{\lambda(x_{\tilde{B}},6r_{\tilde{B}})}
\frac{1}{\mu(5\times6\tilde{B})}\int_{6\tilde{B}}|(b_{2}(y_2)-m_{Q}(b_{2}))f_{2}(y_2)|d\mu(y_2)\biggr\}\\
\leq&CK^{3}_{\tilde{B},Q}||b_{1}||_{\ast}||b_{2}||_{\ast}M_{p_{1},(5)}f_{1}(x)M_{p_{2},(5)}f_{2}(x)\\
\leq&CK^{3}_{B,Q}||b_{1}||_{\ast}||b_{2}||_{\ast}M_{p_{1},(5)}f_{1}(x)M_{p_{2},(5)}f_{2}(x),
\end{split}
\end{equation*}
here we have used the property $K_{\tilde{B},Q}\leq CK_{B,Q}$ with $B\subset \tilde{B} \subset Q$.

Taking the mean over $z\in \tilde{B}$, we obtain
\begin{equation*}
F_{21}\leq CK^{3}_{B,Q}||b_{1}||_{\ast}||b_{2}||_{\ast}M_{p_{1},(5)}f_{1}(x)M_{p_{2},(5)}f_{2}(x).
\end{equation*}

Similar with the method to estimate $F_{21}$, we immediately have
\begin{equation*}
F_{22}+F_{27}+F_{28}\leq CK^{3}_{B,Q}||b_{1}||_{\ast}||b_{2}||_{\ast}M_{p_{1},(5)}f_{1}(x)M_{p_{2},(5)}f_{2}(x).
\end{equation*}

Now we estimate $F_{23}$.
\begin{equation*}
\begin{split}
&|T((b_{1}-m_{Q}(b_{1}))f_{1}\chi_{6^{N}\tilde{B}\backslash\frac{6}{5}\tilde{B}},(b_{2}-m_{Q}(b_{2}))f_{2}\chi_{6^{N}\tilde{B}\backslash\frac{6}{5}\tilde{B}})(z)\\
\leq& C\int_{6^{N}\tilde{B}\backslash\frac{6}{5}\tilde{B}} \int_{6^{N}\tilde{B}\backslash\frac{6}{5}\tilde{B}}
\frac{|(b_{1}(y_1)-m_{Q}(b_{1}))f_{1}(y_1)(b_{2}(y_2)-m_{Q}(b_{2}))f_{2}(y_2)|}{[\sum\limits_{i=1}^{2}\lambda(z,d(z,y_i))]^{2}}d\mu(y_1)d\mu(y_2)\\
\leq&C\sum_{j=1}^{N-1}\sum_{k=1}^{N-1}\int_{6^{j+1}\tilde{B}\backslash6^{j}\tilde{B}}\int_{6^{k+1}\tilde{B}\backslash6^{k}\tilde{B}}
\frac{|(b_{1}(y_1)-m_{Q}(b_{1}))f_{1}(y_1)(b_{2}(y_2)-m_{Q}(b_{2}))f_{2}(y_2)|}{[\sum\limits_{i=1}^{2}\lambda(z,d(z,y_i))]^{2}}d\mu(y_1)d\mu(y_2)\\
&\ \ +\int_{6\tilde{B}\backslash\frac{6}{5}\tilde{B}}\int_{6\tilde{B}\backslash\frac{6}{5}\tilde{B}}
\frac{|(b_{1}(y_1)-m_{Q}(b_{1}))f_{1}(y_1)(b_{2}(y_2)-m_{Q}(b_{2}))f_{2}(y_2)|}{[\sum\limits_{i=1}^{2}\lambda(z,d(z,y_i))]^{2}}d\mu(y_1)d\mu(y_2)\\
\leq&C\sum_{k=1}^{N-1}\frac{\mu(5\times 6^{k+1}\tilde{B})}{\lambda(x_{\tilde{B}},6^{k}r_{\tilde{B}})}\frac{1}{\mu(5\times 6^{k+1}\tilde{B})}\int_{6^{k+1}\tilde{B}}|(b_{1}(y_1)-m_{Q}(b_{1}))f_{1}(y_1)|d\mu(y_1)\\
&\ \ \times\sum_{j=1}^{N-1}\frac{\mu(5\times 6^{j+1}\tilde{B})}{\lambda(x_{\tilde{B}},6^{j}r_{\tilde{B}})}\frac{1}{\mu(5\times 6^{j+1}\tilde{B})}\int_{6^{j+1}\tilde{B}}|(b_{2}(y_2)-m_{Q}(b_{2}))f_{2}(y_2)|d\mu(y_2)\\
&\ \ +\biggl[\frac{\mu(30\tilde{B})}{\lambda(x_{\tilde{B}},6r_{\tilde{B}})}\biggr]^{2}\prod_{i=1}^{2}\frac{1}{\mu(30\tilde{B})}\int_{6\tilde{B}}|(b_{i}(y_i)-m_{Q}(b_{i}))f_{i}(y_i)|d\mu(y_i)\\
\leq &CK_{\tilde{B},Q}^{4}||b_{1}||_{\ast}||b_{2}||_{\ast}M_{p_{1},(5)}f_{1}(x)M_{p_{2},(5)}f_{2}(x).\\
\leq &CK_{B,Q}^{4}||b_{1}||_{\ast}||b_{2}||_{\ast}M_{p_{1},(5)}f_{1}(x)M_{p_{2},(5)}f_{2}(x).\\
\end{split}
\end{equation*}
Taking the mean over $z\in \tilde{B}$, we obtain
\begin{equation*}
F_{23}\leq CK_{B,Q}^{4}||b_{1}||_{\ast}||b_{2}||_{\ast}M_{p_{1},(5)}f_{1}(x)M_{p_{2},(5)}f_{2}(x).
\end{equation*}

For $F_{29}$, with the similar method to estimate $F_{23}$, we obtian
\begin{equation*}
F_{29}\leq CK_{B,Q}^{4}||b_{1}||_{\ast}||b_{2}||_{\ast}M_{p_{1},(5)}f_{1}(x)M_{p_{2},(5)}f_{2}(x).
\end{equation*}

Using the similar method to estimate $E_{42}(z)$, it follows
\begin{equation*}
F_{24}+F_{25}\leq
C||b_{1}||_{\ast}||b_{2}||_{\ast}M_{p_{1},(5)}f_{1}(x)M_{p_{2},(5)}f_{2}(x).
\end{equation*}

Using the similar method to estimate $E_{44}(z)$, we have
\begin{equation*}
F_{26}\leq
C||b_{1}||_{\ast}||b_{2}||_{\ast}M_{p_{1},(5)}f_{1}(x)M_{p_{2},(5)}f_{2}(x).
\end{equation*}

Next we consider $|h_{B}-h_{\tilde{B}}|$. With the similar method to estimate $F_{2}$, we easily obtain that
\begin{equation*}
\begin{split}
|h_{B}-h_{\tilde{B}}|
&=\biggl|m_{B}\biggl[T\biggl((b_{1}-
m_{\tilde{B}}(b_{1}))f_{1}\chi_{\frac{6}{5}B},(b_{2}-m_{\tilde{B}}(b_{2}))f_{2}\chi_{X\backslash\frac{6}{5}B}\biggr)\\
&\ \ +T\biggl((b_{1}-
m_{\tilde{B}}(b_{1}))f_{1}\chi_{X\backslash\frac{6}{5}B},(b_{2}-m_{\tilde{B}}(b_{2}))f_{2}\chi_{\frac{6}{5}B}\biggr)\\
&\ \ +T\biggl((b_{1}-
m_{\tilde{B}}(b_{1}))f_{1}\chi_{X\backslash\frac{6}{5}B},(b_{2}-m_{\tilde{B}}(b_{2}))f_{2}\chi_{X\backslash\frac{6}{5}B}\biggr)\biggr]\\
&\ \ -m_{\tilde{B}}\biggl[T\biggl((b_{1}-
m_{\tilde{B}}(b_{1}))f_{1}\chi_{\frac{6}{5}\tilde{B}},(b_{2}-m_{\tilde{B}}(b_{2}))f_{2}\chi_{X\backslash\frac{6}{5}\tilde{B}}\biggr)\\
&\ \ +T\biggl((b_{1}-
m_{\tilde{B}}(b_{1}))f_{1}\chi_{X\backslash\frac{6}{5}\tilde{B}},(b_{2}-m_{\tilde{B}}(b_{2}))f_{2}\chi_{\frac{6}{5}\tilde{B}}\biggr)\\
&\ \ +T\biggl((b_{1}-
m_{\tilde{B}}(b_{1}))f_{1}\chi_{X\backslash\frac{6}{5}\tilde{B}},(b_{2}-m_{\tilde{B}}(b_{2}))f_{2}\chi_{X\backslash\frac{6}{5}\tilde{B}}\biggr)\biggr]\biggr|\\
&\leq
C||b_{1}||_{\ast}||b_{2}||_{\ast}M_{p_{1},(5)}f_{1}(x)M_{p_{2},(5)}f_{2}(x).
\end{split}
\end{equation*}

Thus (\ref{2.18}) holds and hence (\ref{2.11}) is proved.
With the same method to prove (\ref{2.11}), we can obtain that (\ref{2.12}) and (\ref{2.13}) are
also hold. Here we omit the details. Thus Lemma \ref{lem2.5} in this case is proved.

{\bf Case 2: $0<l(B)=l<1$}. Recall that $\widetilde{B}$ is the smallest $(\alpha,\beta)$-doubling ball of
the form $6^{k}B$ with $k\in {\mathbf{N}}\bigcup\{0\}$.  Assume that $B_{0}$, $Q_{0}$ and $\tilde{B}_{0}$ are concentric with $B$, $Q$ and $\tilde{B}$ respectively and $l(B_{0})=l(B)^{\alpha}$, $l(Q_{0})=l(Q)^{\alpha}$, $l(\tilde{B}_{0})=l(\tilde{B})^{\alpha}$, then $B\subset B_{0}$, $Q\subset Q_{0}$ and $\tilde{B}\subset \tilde{B}_{0}$. Let $\rho$ be a number such that $6B_{0}=\rho B$. As in the proof of Theorem 9.1 in \cite{T2}, to obtain (\ref{2.14}), it suffices to show that
\begin{equation}\label{2.19}
\begin{split}
&\biggl(\frac{1}{\mu(6B_{0})}\int_{B}||[b_{1},b_{2},T](f_{1},f_{2})(z)|^{\delta}-|h_{B}|^{\delta}|d\mu(z)\biggr
)^{1/\delta}\\
\leq &C||b_{1}||_{\ast}||b_{2}||_{\ast}M_{r,(6)}(T(f_{1},f_{2}))(x)
+C||b_{1}||_{\ast}M_{r,(6)}([b_{2},T](f_{1},f_{2}))(x)\\
&+C||b_{2}||_{\ast}M_{r,(6)}([b_{1},T](f_{1},f_{2}))(x)+C||b_{1}||_{\ast}||b_{2}||_{\ast}M_{p_{1},(5)}f_{1}(x)M_{p_{2},(5)}f_{2}(x),
\end{split}
\end{equation}
 holds for
any $x$ and ball $B$ with $x\in B$, and
\begin{equation}\label{2.20}
\begin{split}
|h_{B}-h_{Q}|&\leq
CK_{B,Q}^{4}\biggr[||b_{1}||_{\ast}||b_{2}||_{\ast}M_{r,(6)}(T(f_{1},f_{2}))(x)
+||b_{1}||_{\ast}M_{r,(6)}([b_{2},T](f_{1},f_{2}))(x)\\
+&||b_{2}||_{\ast}M_{r,(6)}([b_{1},T](f_{1},f_{2}))(x)
+||b_{1}||_{\ast}||b_{2}||_{\ast}M_{p_{1},(5)}f_{1}(x)M_{p_{2},(5)}f_{2}(x)\\
+&||b_{1}||_{\ast}||b_{2}||_{\ast}M_{r,(6)}(T(f_{1}\chi_{\frac{6}{5}\tilde{B}_{0}},f_{2}\chi_{\frac{6}{5}\tilde{B}_{0}}))(x)
+||b_{1}||_{\ast}M_{r,(6)}([b_{2},T](f_{1}\chi_{\frac{6}{5}\tilde{B}_{0}},f_{2}\chi_{\frac{6}{5}\tilde{B}_{0}}))(x)\\
+&||b_{2}||_{\ast}M_{r,(6)}([b_{1},T](f_{1}\chi_{\frac{6}{5}\tilde{B}_{0}},f_{2}\chi_{\frac{6}{5}\tilde{B}_{0}}))(x)\biggr].
\end{split}
\end{equation}
for all balls $B\subset Q$ with $x\in B$, where $B$ is an arbitrary
ball, $Q$ is a doubling ball. For any ball $B$, we denote
\begin{equation*}
\begin{split}
h_{B}:=& m_{B}\biggl[T\biggl((b_{1}-
m_{\tilde{B}}(b_{1}))f_{1}\chi_{\frac{6}{5}B_{0}},(b_{2}-m_{\tilde{B}}(b_{2}))f_{2}\chi_{X\backslash\frac{6}{5}B_{0}}\biggr)\\
&\ \ +T\biggl((b_{1}-
m_{\tilde{B}}(b_{1}))f_{1}\chi_{X\backslash\frac{6}{5}B_{0}},(b_{2}-m_{\tilde{B}}(b_{2}))f_{2}\chi_{\frac{6}{5}B_{0}}\biggr)\\
&\ \ +T\biggl((b_{1}-
m_{\tilde{B}}(b_{1}))f_{1}\chi_{X\backslash\frac{6}{5}B_{0}},(b_{2}-m_{\tilde{B}}(b_{2}))f_{2}\chi_{X\backslash\frac{6}{5}B_{0}}\biggr)\biggr]
\end{split}
\end{equation*}
and
\begin{equation*}
\begin{split}
h_{Q}:=& m_{Q}\biggl[T\biggl((b_{1}-
m_{Q}(b_{1}))f_{1}\chi_{\frac{6}{5}Q_{0}},(b_{2}-m_{Q}(b_{2}))f_{2}\chi_{X\backslash\frac{6}{5}Q_{0}}\biggr)\\
&\ \ +T\biggl((b_{1}-
m_{Q}(b_{1}))f_{1}\chi_{X\backslash\frac{6}{5}Q_{0}},(b_{2}-m_{Q}(b_{2}))f_{2}\chi_{\frac{6}{5}Q_{0}}\biggr)\\
&\ \ +T\biggl((b_{1}-
m_{Q}(b_{1}))f_{1}\chi_{X\backslash\frac{6}{5}Q_{0}},(b_{2}-m_{Q}(b_{2}))f_{2}\chi_{X\backslash\frac{6}{5}Q_{0}}\biggr)\biggr].
\end{split}
\end{equation*}
Then
\begin{equation*}
\begin{split}
&\biggl(\frac{1}{\mu(6B_{0})}\int_{B}||[b_{1},b_{2},T](f_{1},f_{2})(z)|^{\delta}-|h_{B}|^{\delta}|d\mu(z)\biggr
)^{1/\delta}\\
\leq&
C\biggl(\frac{1}{\mu(6B_{0})}\int_{B}|[b_{1},b_{2},T](f_{1},f_{2})(z)-h_{B}|^{\delta}d\mu(z)\biggr)^{1/\delta}\\
\leq&
C\biggl(\frac{1}{\mu(6B_{0})}\int_{B}|(b_{1}(z)-m_{\tilde{B}}(b_{1}))(b_{2}(z)-m_{\tilde{B}}(b_{2}))T(f_{1},f_{2})(z)|^{\delta}d\mu(z)\biggr)^{1/\delta}\\
&\ \ +C\biggl(\frac{1}{\mu(6B_{0})}\int_{B}|(b_{1}(z)-m_{\tilde{B}}(b_{1}))T(f_{1},(b_{2}-b_{2}(z))f_{2})(z)|^{\delta}d\mu(z)\biggr)^{1/\delta}\\
&\ \ +C\biggl(\frac{1}{\mu(6B_{0})}\int_{B}|(b_{2}(z)-m_{\tilde{B}}(b_{2}))T((b_{1}-b_{1}(z))f_{1},f_{2})(z)|^{\delta}d\mu(z)\biggr)^{1/\delta}\\
&\ \
+C\biggl(\frac{1}{\mu(6B_{0})}\int_{B}|T((b_{1}-m_{\tilde{B}}(b_{1}))f_{1},(b_{2}-m_{\tilde{B}}(b_{2}))f_{2})(z)-h_{B}|^{\delta}d\mu(z)\biggr)^{1/\delta}\\
=&:G_{1}+G_{2}+G_{3}+G_{4}.
\end{split}
\end{equation*}

We first estimate $G_{1}$. Choosing $r_{1},r_{2}>1$ such that
$\dfrac{1}{r}+\dfrac{1}{r_{1}}+\dfrac{1}{r_{2}}=\dfrac{1}{\delta}$.
By H\"{o}lder's inequality, Lemma \ref{lem2.2} and the fact $B\subset B_{0}$, we have

\begin{equation*}
\begin{split}
G_{1}&\leq
C\biggl(\frac{1}{\mu(6B_{0})}\int_{B}|b_{1}(z)-m_{\tilde{B}}b_{1}|^{r_{1}}d\mu(z)\biggr)^{1/r_{1}}\\
&\ \ \times\biggl(\frac{1}{\mu(6B_{0})}\int_{B}|b_{2}(z)-m_{\tilde{B}}b_{2}|^{r_{2}}d\mu(z)\biggr)^{1/r_{2}}\\
&\ \ \times\biggl(\frac{1}{\mu(6B_{0})}\int_{B_{0}}|T(f_{1},f_{2})|^{r}d\mu(z)\biggr)^{1/r}\\
&\leq C||b_{1}||_{\ast}||b_{2}||_{\ast}M_{r,(6)}(T(f_{1},f_{2}))(x).
\end{split}
\end{equation*}

For $G_{2}$, let $s>1$ such that
$\dfrac{1}{s}+\dfrac{1}{r}=\dfrac{1}{\delta}$, by H\"{o}lder's
inequality, Lemma \ref{lem2.2} and the fact $B\subset B_{0}$, we deduce

\begin{equation*}
\begin{split}
G_{2}&\leq
C\biggl(\frac{1}{\mu(6B_{0})}\int_{B}|b_{1}(z)-m_{\tilde{B}}b_{1}|^{s}d\mu(z)\biggr)^{1/s}\\
&\ \ \times\biggl(\frac{1}{\mu(6B_{0})}\int_{B_{0}}|[b_{2},T](f_{1},f_{2})|^{r}d\mu(z)\biggr)^{1/r}\\
&\leq C||b_{1}||_{\ast}M_{r,(6)}([b_{2},T](f_{1},f_{2}))(x).
\end{split}
\end{equation*}

Similar to estimate $G_{2}$, we immediately obtain
\begin{equation*}
G_{3}\leq C||b_{2}||_{\ast}M_{r,(6)}([b_{1},T](f_{1},f_{2}))(x).
\end{equation*}

Let us turn to estimate $G_{4}$. Denote
$f_{j}^{1}=f_{j}\chi_{\frac{6}{5}B_{0}}$ and $f_{j}^{2}=f_{j}-f_{j}^{1}$
for $j=1,2$, we have
\begin{equation*}
\begin{split}
&|T((b_{1}-m_{\tilde{B}}(b_{1}))f_{1},(b_{2}-m_{\tilde{B}}(b_{2}))f_{2})(z)-h_{B}|\\
\leq&|T((b_{1}-m_{\tilde{B}}(b_{1}))f^{1}_{1},(b_{2}-m_{\tilde{B}}(b_{2}))f^{1}_{2})(z)|\\
&\ \ +|T((b_{1}-m_{\tilde{B}}(b_{1}))f^{1}_{1},(b_{2}-m_{\tilde{B}}(b_{2}))f^{2}_{2})(z)
-m_{B}[T((b_{1}-m_{\tilde{B}}(b_{1}))f^{1}_{1},(b_{2}-m_{\tilde{B}}(b_{2}))f^{2}_{2})]|\\
&\ \ +|T((b_{1}-m_{\tilde{B}}(b_{1}))f^{2}_{1},(b_{2}-m_{\tilde{B}}(b_{2}))f^{1}_{2})(z)
-m_{B}[T((b_{1}-m_{\tilde{B}}(b_{1}))f^{2}_{1},(b_{2}-m_{\tilde{B}}(b_{2}))f^{1}_{2})]|\\
&\ \ +|T((b_{1}-m_{\tilde{B}}(b_{1}))f^{2}_{1},(b_{2}-m_{\tilde{B}}(b_{2}))f^{2}_{2})(z)
-m_{B}[T((b_{1}-m_{\tilde{B}}(b_{1}))f^{2}_{1},(b_{2}-m_{\tilde{B}}(b_{2}))f^{2}_{2})]|\\
=&:G_{41}(z)+G_{42}(z)+G_{43}(z)+G_{44}(z).
\end{split}
\end{equation*}
Then
\begin{equation*}
\begin{split}
G_{4}&\leq
C\biggl(\frac{1}{\mu(6B_{0})}\int_{B}E_{41}(z)^{\delta}d\mu(z)\biggr)^{1/\delta}
+C\biggl(\frac{1}{\mu(6B_{0})}\int_{B}E_{42}(z)^{\delta}d\mu(z)\biggr)^{1/\delta}\\
&\ \ +
C\biggl(\frac{1}{\mu(6B_{0})}\int_{B}E_{43}(z)^{\delta}d\mu(z)\biggr)^{1/\delta}
+
C\biggl(\frac{1}{\mu(6B_{0})}\int_{B}E_{44}(z)^{\delta}d\mu(z)\biggr)^{1/\delta}\\
&=:G_{41}+G_{42}+G_{43}+G_{44}.
\end{split}
\end{equation*}

To estimate $G_{41}$, using Kolmogorov's
theorem, Lemma \ref{lem2.2} and H\"{o}lder's inequality, we obtain
\begin{equation*}
\begin{split}
G_{41}\leq&
C||T((b_{1}-m_{\tilde{B}}(b_{1}))f_{1}^{1},(b_{2}-m_{\tilde{B}}(b_{2}))f_{2}^{1})||_{L^{1/2,\infty}(\frac{6}{5}B_{0},\frac{d\mu(z)}{\mu(6B_{0})})}\\
\leq&
C\frac{1}{\mu(6B_{0})}\int_{\frac{6}{5}B_{0}}|(b_{1}(z)-m_{\tilde{B}}(b_{1}))f_{1}(z)|d\mu(z)\\
&\ \ \times\frac{1}{\mu(6B_{0})}\int_{\frac{6}{5}B_{0}}|(b_{2}(z)-m_{\tilde{B}}(b_{2}))f_{2}(z)|d\mu(z)\\
\leq&
C(\frac{1}{\mu(6B_{0})}\int_{\frac{6}{5}B_{0}}|b_{1}(z)-m_{\tilde{B}}(b_{1})|^{p'_{1}}d\mu(z))^{1/p'_{1}}\\
&\ \ \times(\frac{1}{\mu(6B_{0})}\int_{\frac{6}{5}B_{0}}|f_{1}(z)|^{p_{1}}d\mu(z))^{1/p_{1}}\\
&\ \ \times(\frac{1}{\mu(6B_{0})}\int_{\frac{6}{5}B_{0}}|b_{2}(z)-m_{\tilde{B}}(b_{2})|^{p'_{2}}d\mu(z))^{1/p'_{2}}\\
&\ \ \times(\frac{1}{\mu(6B_{0})}\int_{\frac{6}{5}B_{0}}|f_{2}(z)|^{p_{2}}d\mu(z))^{1/p_{2}}\\
\leq&
C||b_{1}||_{\ast}||b_{2}||_{\ast}M_{p_{1},(5)}f_{1}(x)M_{p_{2},(5)}f_{2}(x).
\end{split}
\end{equation*}

To compute $G_{42}$, let $z,y\in B$, $y_{1}\in \frac{6}{5}B_{0}$ and $y_{2}\in X\backslash \frac{6}{5}B_{0}$, then $\max\limits_{1\leq i\leq
2}d(z,y_{i})\geq d(z,y_{2})\geq Cl(B_{0})= Cl(B)^{\alpha}\geq Cd(z,y)^{\alpha}$. Using Definition \ref{def1.6}, Lemma \ref{lem2.2}, Lemma \ref{lem2.3},
H\"{o}lder's inequality and the properties of $\lambda$, we know

\begin{equation*}
\begin{split}
&|T((b_{1}-m_{\tilde{B}}(b_{1}))f^{1}_{1},(b_{2}-m_{\tilde{B}}(b_{2}))f^{2}_{2})(z)
-T((b_{1}-m_{\tilde{B}}(b_{1}))f^{1}_{1},(b_{2}-m_{\tilde{B}}(b_{2}))f^{2}_{2})(y)|\\
\leq& C\int_{X\backslash\frac{6}{5}B_{0}}\int_{\frac{6}{5}B_{0}}|K(z,y_1,y_2)-K(y,y_1,y_2)| |\prod_{i=1}^{2}(b_{i}(y_{i})-m_{\tilde{B}}(b_i))f_{i}(y_i)|d\mu(y_1)d\mu(y_2)\\
\leq &C\int_{X\backslash\frac{6}{5}B_{0}}\int_{\frac{6}{5}B_{0}}\frac{d(z,y)^{\delta}}
{\biggl[\sum\limits_{i=1}^{2}d(z,y_{i})\biggr]^{\delta/\alpha}\biggl[\sum\limits_{i=1}^{2}\lambda(z,d(z,y_{i}))\biggr]^{2}} \\ &\ \ \ \times|\prod_{i=1}^{2}(b_{i}(y_{i})-m_{\tilde{B}}(b_i))f_{i}(y_i)|d\mu(y_1)d\mu(y_2)\\
\leq& C\int_{\frac{6}{5}B_{0}}
\frac{|b_{1}(y_{1})-m_{\tilde{B}}(b_{1})||f_{1}(y_{1})|}{\lambda(z,d(z,y_{1}))}d\mu(y_{1})\\
&\ \ \ \times\int_{X\backslash\frac{6}{5}B_{0}}
\frac{d(z,y)^{\delta}}
{d(z,y_{2})^{\delta/\alpha}}\frac{|b_{2}(y_{2})-m_{\tilde{B}}(b_{2})||f_{2}(y_{2})|}{\lambda(z,d(z,y_{2}))}d\mu(y_{2})\\
\leq& C\frac{\mu(6B_{0})}{\lambda(x_{B},\frac{6}{5}r_{B_{0}})}\frac{1}{\mu(6B_{0})}\int_{\frac{6}{5}B_{0}}
|b_{1}(y_{1})-m_{\tilde{B}}(b_{1})||f_{1}(y_{1})|d\mu(y_{1})\\
&\ \ \ \times\sum_{k=1}^{\infty}\int_{6^{k}\frac{6}{5}B_{0}\backslash6^{k-1}\frac{6}{5}B_{0}}
\frac{d(z,y)^{\delta}}
{d(z,y_{2})^{\delta/\alpha}}\frac{|b_{2}(y_{2})-m_{\tilde{B}}(b_{2})||f_{2}(y_{2})|}{\lambda(z,d(z,y_{2}))}d\mu(y_{2})\\
\end{split}
\end{equation*}

\begin{equation*}
\begin{split}
&\leq
C(\frac{1}{\mu(6B_{0})}\int_{\frac{6}{5}B_{0}}|b_{1}(y_{1})-m_{\tilde{B}}(b_{1})|^{p'_{1}}d\mu(y_{1}))^{1/p'_{1}}\\
&\ \ \ \times(\frac{1}{\mu(6B_{0})}\int_{\frac{6}{5}B_{0}}|f_{1}(y_{1})|^{p_{1}}d\mu(y_{1}))^{1/p_{1}}\\
&\ \ \ \times
\sum_{k=1}^{\infty}\frac{1}{\lambda(x_{B},6^{k-1}\frac{6}{5}r_{B_{0}})}6^{-k\delta/\alpha}
\int_{6^{k}\frac{6}{5}B_{0}}|b_{2}(y_{2})-m_{\tilde{B}}
(b_{2})||f_{2}(y_{2})|d\mu(y_{2})\\
&\leq
C||b_{1}||_{\ast}M_{p_{1},(5)}f_{1}(x)\sum_{k=1}^{\infty}6^{-k\delta/\alpha}\biggl[\biggl(\frac{1}{\mu(5\times6^{k}\frac{6}{5}B_{0})}\\
&\ \ \ \times\int_{6^{k}\frac{6}{5}B_{0}}|b_{2}(y_{2})-m_{\widetilde{6^{k}\frac{6}{5}B_{0}}}(b_{2})|^{p'_{2}}d\mu(y_{2})\biggr)^{1/p'_{2}}\\
&\ \ \ \times
\biggl(\frac{1}{\mu(5\times6^{k}\frac{6}{5}B_{0})}\int_{6^{k}\frac{6}{5}B_{0}}|f_{2}(y_{2})|^{p_{2}}d\mu(y_{2})\biggr)^{1/p_{2}}\\
&\ \ \ +C
k||b_{2}||_{\ast}\frac{1}{\mu(5\times6^{k}\frac{6}{5}B_{0})}\int_{6^{k}\frac{6}{5}B_{0}}|f_{2}(y_{2})|d\mu(y_{2})\biggr]\\
&\leq
C||b_{1}||_{\ast}||b_{2}||_{\ast}M_{p_{1},(5)}f_{1}(x)M_{p_{2},(5)}f_{2}(x).\\
\end{split}
\end{equation*}
Taking the mean over $y\in B$, we have
\begin{equation*}
G_{42}(z)\leq
C||b_{1}||_{\ast}||b_{2}||_{\ast}M_{p_{1},(5)}f_{1}(x)M_{p_{2},(5)}f_{2}(x).
\end{equation*}
Thus
\begin{equation*}
G_{42}\leq
C||b_{1}||_{\ast}||b_{2}||_{\ast}M_{p_{1},(5)}f_{1}(x)M_{p_{2},(5)}f_{2}(x).
\end{equation*}

Similarly, we get
\begin{equation*}
G_{43}\leq
C||b_{1}||_{\ast}||b_{2}||_{\ast}M_{p_{1},(5)}f_{1}(x)M_{p_{2},(5)}f_{2}(x).
\end{equation*}

For $G_{44}$, by Definition \ref{def1.6}, Lemma \ref{lem2.2}, Lemma \ref{lem2.3},
H\"{o}lder's inequality and the properties of $\lambda$, we obtain
\begin{equation*}
\begin{split}
&|T((b_{1}-m_{\tilde{B}}(b_{1}))f^{2}_{1},(b_{2}-m_{\tilde{B}}(b_{2}))f^{2}_{2})(z)
-T((b_{1}-m_{\tilde{B}}(b_{1}))f^{2}_{1},(b_{2}-m_{\tilde{B}}(b_{2}))f^{2}_{2})(y)|\\
\leq &C\int_{X\backslash
\frac{6}{5}B_{0}}\int_{X\backslash\frac{6}{5}B_{0}}|K(z,y_{1},y_{2})-K(y,y_{1},y_{2})|\\
&\ \ \ \times|\prod_{i=1}^{2}(b_{i}(y_{i})-m_{\tilde{B}}b_{i})f_{i}(y_{i})|d\mu(y_{1})d\mu(y_{2})\\
\leq &C\int_{X\backslash
\frac{6}{5}B_{0}}\int_{X\backslash\frac{6}{5}B_{0}}\frac{d(z,y)^{\delta}
|\prod_{i=1}^{2}(b_{i}(y_{i})-m_{\tilde{B}}b_{i})f_{i}(y_{i})|d\mu(y_{1})d\mu(y_{2})}
{(d(z,y_{1})+d(z,y_{2}))^{\delta/\alpha}[\sum_{j=1}^{2}\lambda(z,d(z,y_{j}))]^{2}}\\
\leq &C\prod_{i=1}^{2}\int_{X\backslash
\frac{6}{5}B_{0}}\frac{d(z,y)^{\delta_{i}}
|b_{i}(y_{i})-m_{\tilde{B}}b_{i}||f_{i}(y_{i})|d\mu(y_{i})}{d(z,y_{i})^{\delta_{i}/\alpha}\lambda(x_B,d(z,y_{i}))}\\
\leq
&C\prod_{i=1}^{2}\sum_{k=1}^{\infty}\int_{6^{k}\frac{6}{5}B_{0}}6^{-k\delta_{i}/\alpha}\frac{\mu(5\times6^{k}\frac{6}{5}B_{0})}
{\lambda(x_{B},5\times6^{k}\frac{6}{5}r_{B_{0}})}\\
&\ \ \times\frac{1}{\mu(5\times6^{k}\frac{6}{5}B_{0})}|b_{i}(y_{i})-m_{\tilde{B}}(b_{i})||f_{i}(y_i)|d\mu(y_{i})\\
\leq
&C\prod_{i=1}^{2}\sum_{k=1}^{\infty}6^{-k\delta_{i}/\alpha}(\frac{1}{\mu(5\times6^{k}\frac{6}{5}B_{0})}
\int_{6^{k}\frac{6}{5}B_{0}}|b_{i}(y_{i})-m_{\tilde{B}}(b_{i})|^{p'_{i}}d\mu(y_{i}))^{1/p'_{i}}\\
&\ \ \ \times(\frac{1}{\mu(5\times6^{k}\frac{6}{5}B_{0})}
\int_{6^{k}\frac{6}{5}B_{0}}|f_{i}(y_i)|^{p_{i}})^{1/p_{i}}\\
\leq
&C\prod_{i=1}^{2}\sum_{k=1}^{\infty}6^{-k\delta_{i}/\alpha}M_{p_{i},(5)}f_{i}(x)(\frac{1}{\mu(5\times6^{k}\frac{6}{5}B_{0})}
\int_{6^{k}\frac{6}{5}B_{0}}|b_{i}(y_{i})-m_{\widetilde{6^{k}\frac{6}{5}B_{0}}}(b_i)\\
&\ \ \ +m_{\widetilde{6^{k}\frac{6}{5}B_{0}}}(b_{i})-m_{\tilde{B}}(b_{i})|^{p'_{i}}d\mu(y_{i}))^{1/p'_{i}}\\
\leq
&C\prod_{i=1}^{2}\sum_{k=1}^{\infty}6^{-k\delta_{i}/\alpha}k||b_{i}||_{\ast}M_{p_{i},(5)}f_{i}(x)\\
\leq&
C||b_{1}||_{\ast}||b_{2}||_{\ast}M_{p_{1},(5)}f_{1}(x)M_{p_{2},(5)}f_{2}(x).
\end{split}
\end{equation*}
where $\delta_{1},\delta_{2}>0$ and $\delta_{1}+\delta_{2}=\delta$.

Taking the mean over $y\in B$, then
\begin{equation*}
G_{44}(z)\leq
C||b_{1}||_{\ast}||b_{2}||_{\ast}M_{p_{1},(5)}f_{1}(x)M_{p_{2},(5)}f_{2}(x).
\end{equation*}
Therefore
\begin{equation*}
G_{44}\leq
C||b_{1}||_{\ast}||b_{2}||_{\ast}M_{p_{1},(5)}f_{1}(x)M_{p_{2},(5)}f_{2}(x).
\end{equation*}
So (\ref{2.19}) can be obtained.

Next we prove (\ref{2.20}). Consider two balls $B\subset Q$ with $x\in B$,
where $B$ is an arbitrary ball and $Q$ is a doubling ball. Assume that $B_{0}$, $Q_{0}$ and $\tilde{B}_{0}$ are concentric with $B$, $Q$ and $\tilde{B}$ respectively and $l(B_{0})=l(B)^{\alpha}$, $l(Q_{0})=l(Q)^{\alpha}$, $l(\tilde{B}_{0})=l(\tilde{B})^{\alpha}$, then $B\subset B_{0}$, $Q\subset Q_{0}$ and $\tilde{B}\subset \tilde{B}_{0}$. Thus we have the four cases: $B\subset Q\subset Q_{0}\subset \tilde{B}\subset \tilde{B}_{0}$, $B\subset Q\subset \tilde{B}\subset Q_{0}\subset \tilde{B}_{0}$, $B\subset\tilde{B} \subset\tilde{B}_{0}\subset Q\subset Q_{0}$,
$B\subset\tilde{B} \subset Q\subset\tilde{B}_{0}\subset Q_{0}$. Without loss of generality, we only consider $B\subset\tilde{B} \subset\tilde{B}_{0}\subset Q\subset Q_{0}$ in this paper.  The other cases can be considered by the same method.

Let
$N=N_{\tilde{B}_{0}, Q_{0}}+1$. It is easy to see that
\begin{equation*}
|h_{B}-h_{Q}|\leq |h_{B}-h_{\tilde{B}}|+|h_{\tilde{B}}-h_{Q}|.
\end{equation*}
Now we first consider $|h_{\tilde{B}}-h_{Q}|$. Write
\begin{equation*}
\begin{split}
|h_{\tilde{B}}-h_{Q}|\leq & \biggl|m_{\tilde{B}}\biggl[T\biggl((b_{1}-
m_{\tilde{B}}(b_{1}))f_{1}\chi_{\frac{6}{5}\tilde{B}_{0}},(b_{2}-m_{\tilde{B}}(b_{2}))f_{2}\chi_{X\backslash\frac{6}{5}\tilde{B}_{0}}\biggr)\\
&\ \ +T\biggl((b_{1}-
m_{\tilde{B}}(b_{1}))f_{1}\chi_{X\backslash\frac{6}{5}\tilde{B}_{0}},(b_{2}-m_{\tilde{B}}(b_{2}))f_{2}\chi_{\frac{6}{5}\tilde{B}_{0}}\biggr)\\
&\ \ +T\biggl((b_{1}-
m_{\tilde{B}}(b_{1}))f_{1}\chi_{X\backslash\frac{6}{5}\tilde{B}_{0}},(b_{2}-m_{\tilde{B}}(b_{2}))f_{2}\chi_{X\backslash\frac{6}{5}\tilde{B}_{0}}\biggr)\biggr]\\
&\ \ -m_{\tilde{B}}\biggl[T\biggl((b_{1}-
m_{Q}(b_{1}))f_{1}\chi_{\frac{6}{5}\tilde{B}_{0}},(b_{2}-m_{Q}(b_{2}))f_{2}\chi_{X\backslash\frac{6}{5}\tilde{B}_{0}}\biggr)\\
&\ \ +T\biggl((b_{1}-
m_{Q}(b_{1}))f_{1}\chi_{X\backslash\frac{6}{5}\tilde{B}_{0}},(b_{2}-m_{Q}(b_{2}))f_{2}\chi_{\frac{6}{5}\tilde{B}_{0}}\biggr)\\
&\ \ +T\biggl((b_{1}-
m_{Q}(b_{1}))f_{1}\chi_{X\backslash\frac{6}{5}\tilde{B}_{0}},(b_{2}-m_{Q}(b_{2}))f_{2}\chi_{X\backslash\frac{6}{5}\tilde{B}_{0}}\biggr)\biggr]\biggr|\\
&\ \ +\biggl|m_{\tilde{B}}\biggl[T\biggl((b_{1}-
m_{Q}(b_{1}))f_{1}\chi_{\frac{6}{5}\tilde{B}_{0}},(b_{2}-m_{Q}(b_{2}))f_{2}\chi_{X\backslash\frac{6}{5}\tilde{B}_{0}}\biggr)\\
&\ \ +T\biggl((b_{1}-
m_{Q}(b_{1}))f_{1}\chi_{X\backslash\frac{6}{5}\tilde{B}_{0}},(b_{2}-m_{Q}(b_{2}))f_{2}\chi_{\frac{6}{5}\tilde{B}_{0}}\biggr)\\
&\ \ +T\biggl((b_{1}-
m_{Q}(b_{1}))f_{1}\chi_{X\backslash\frac{6}{5}\tilde{B}_{0}},(b_{2}-m_{Q}(b_{2}))f_{2}\chi_{X\backslash\frac{6}{5}\tilde{B}_{0}}\biggr)\biggr]\\
&\ \ -m_{Q}\biggl[T\biggl((b_{1}-
m_{Q}(b_{1}))f_{1}\chi_{\frac{6}{5}Q_{0}},(b_{2}-m_{Q}(b_{2}))f_{2}\chi_{X\backslash\frac{6}{5}Q_{0}}\biggr)\\
&\ \ +T\biggl((b_{1}-
m_{Q}(b_{1}))f_{1}\chi_{X\backslash\frac{6}{5}Q_{0}},(b_{2}-m_{Q}(b_{2}))f_{2}\chi_{\frac{6}{5}Q_{0}}\biggr)\\
&\ \ +T\biggl((b_{1}-
m_{Q}(b_{1}))f_{1}\chi_{X\backslash\frac{6}{5}Q_{0}},(b_{2}-m_{Q}(b_{2}))f_{2}\chi_{X\backslash\frac{6}{5}Q_{0}}\biggr)\biggr]\biggr|\\
=:&H_{1}+H_{2}.
\end{split}
\end{equation*}

To estimate $H_{1}$, write
\begin{equation*}
\begin{split}
H_{1}&\leq \biggl|m_{\tilde{B}}\biggl[T((b_{1}-m_{\tilde{B}}(b_{1}))f_{1},(b_{2}-m_{\tilde{B}}(b_{2}))f_{2})-
T((b_{1}-m_{\tilde{B}}(b_{1}))f_{1}\chi_{\frac{6}{5}\tilde{B}_{0}},(b_{2}-m_{\tilde{B}}(b_{2}))f_{2}\chi_{\frac{6}{5}\tilde{B}_{0}})\biggr]\\
&\ \ -m_{\tilde{B}}\biggl[T((b_{1}-m_{Q}(b_{1}))f_{1},(b_{2}-m_{Q}(b_{2}))f_{2})-
T((b_{1}-m_{Q}(b_{1}))f_{1}\chi_{\frac{6}{5}\tilde{B}_{0}},(b_{2}-m_{Q}(b_{2}))f_{2}\chi_{\frac{6}{5}\tilde{B}_{0}})\biggr]\biggr|\\
&\leq  \biggl|m_{\tilde{B}}\biggl[T((b_{1}-m_{\tilde{B}}(b_{1}))f_{1},(b_{2}-m_{\tilde{B}}(b_{2}))f_{2})
-T((b_{1}-m_{Q}(b_{1}))f_{1},(b_{2}-m_{Q}(b_{2}))f_{2})\biggr]\biggr|\\
&\ \ +\biggl|m_{\tilde{B}}\biggl[T((b_{1}-m_{\tilde{B}}(b_{1}))f_{1}\chi_{\frac{6}{5}\tilde{B}_{0}},(b_{2}-m_{\tilde{B}}(b_{2}))f_{2}\chi_{\frac{6}{5}\tilde{B}_{0}})\\
&\ \ -T((b_{1}-m_{Q}(b_{1}))f_{1}\chi_{\frac{6}{5}\tilde{B}_{0}},(b_{2}-m_{Q}(b_{2}))f_{2}\chi_{\frac{6}{5}\tilde{B}_{0}})\biggr]\biggr|\\
&=:H_{11}+H_{12}.
\end{split}
\end{equation*}

For $H_{11}$, since it equals to $F_{11}$, we immediately have

\begin{equation*}
\begin{split}
H_{11}&\leq C||b_{1}||_{\ast}||b_{2}||_{\ast}M_{r,(6)}(T(f_{1},f_{2}))(x)+C||b_{1}||_{\ast}M_{r,(6)}([b_{2},T](f_{1},f_{2}))(x)\\
&\ \ +C||b_{2}||_{\ast}M_{r,(6)}([b_{1},T](f_{1},f_{2}))(x).\\
\end{split}
\end{equation*}

For $H_{12}$, with the same method to estimate $F_{11}$, we have
\begin{equation*}
\begin{split}
H_{12}&\leq CK^{2}_{B,Q}||b_{1}||_{\ast}||b_{2}||_{\ast}M_{r,(6)}(T(f_{1}\chi_{\frac{6}{5}\tilde{B}_{0}},f_{2}\chi_{\frac{6}{5}\tilde{B}_{0}}))(x)\\
&\ \ +CK_{B,Q}||b_{1}||_{\ast}M_{r,(6)}([b_{2},T](f_{1}\chi_{\frac{6}{5}\tilde{B}_{0}},f_{2}\chi_{\frac{6}{5}\tilde{B}_{0}}))(x)\\
&\ \ +CK_{B,Q}||b_{2}||_{\ast}M_{r,(6)}([b_{1},T](f_{1}\chi_{\frac{6}{5}\tilde{B}_{0}},f_{2}\chi_{\frac{6}{5}\tilde{B}_{0}}))(x).\\
\end{split}
\end{equation*}
Combining the estimates of $H_{11}$ and $H_{12}$, we complete the estimate for $H_{1}$.

 Now we turn to estimate $H_{2}$. By decomposing the region of the integral, we have
\begin{equation*}
\begin{split}
H_{2}\leq &|m_{\tilde{B}}\{T((b_{1}-m_{Q}(b_{1}))f_{1}\chi_{\frac{6}{5}\tilde{B}_{0}},(b_{2}-m_{Q}(b_{2}))f_{2}\chi_{6^{N}\tilde{B}_{0}\backslash\frac{6}{5}\tilde{B}_{0}})\}|\\
&+|m_{\tilde{B}}\{T((b_{1}-m_{Q}(b_{1}))f_{1}\chi_{6^{N}\tilde{B}_{0}\backslash\frac{6}{5}\tilde{B}_{0}},(b_{2}-m_{Q}(b_{2}))f_{2}\chi_{\frac{6}{5}\tilde{B}_{0}})\}|\\
&+|m_{\tilde{B}}\{T((b_{1}-m_{Q}(b_{1}))f_{1}\chi_{6^{N}\tilde{B}_{0}\backslash\frac{6}{5}\tilde{B}_{0}},(b_{2}-m_{Q}(b_{2}))f_{2}\chi_{6^{N}\tilde{B}_{0}\backslash\frac{6}{5}\tilde{B}_{0}})\}|\\
&+|m_{\tilde{B}}\{T((b_{1}-m_{Q}(b_{1}))f_{1}\chi_{6^{N}\tilde{B}_{0}},(b_{2}-m_{Q}(b_{2}))f_{2}\chi_{X\backslash 6^{N}\tilde{B}_{0}})\}\\
& -m_{Q}\{T((b_{1}-m_{Q}(b_{1}))f_{1}\chi_{6^{N}\tilde{B}_{0}},(b_{2}-m_{Q}(b_{2}))f_{2}\chi_{X\backslash6^{N}\tilde{B}_{0}})\}|\\
&+|m_{\tilde{B}}\{T((b_{1}-m_{Q}(b_{1}))f_{1}\chi_{X\backslash 6^{N}\tilde{B}_{0}},(b_{2}-m_{Q}(b_{2}))f_{2}\chi_{6^{N}\tilde{B}_{0}})\}\\
&-m_{Q}\{T((b_{1}-m_{Q}(b_{1}))f_{1}\chi_{X\backslash6^{N}\tilde{B}_{0}},(b_{2}-m_{Q}(b_{2}))f_{2}\chi_{6^{N}\tilde{B}_{0}})\}|\\
&+|m_{\tilde{B}}\{T((b_{1}-m_{Q}(b_{1}))f_{1}\chi_{X\backslash6^{N}\tilde{B}_{0}},(b_{2}-m_{Q}(b_{2}))f_{2}\chi_{X\backslash 6^{N}\tilde{B}_{0}})\}\\
&-m_{Q}\{T((b_{1}-m_{Q}(b_{1}))f_{1}\chi_{X\backslash6^{N}\tilde{B}_{0}},(b_{2}-m_{Q}(b_{2}))f_{2}\chi_{X\backslash 6^{N}\tilde{B}_{0}})\}|\\
&+|m_{Q}\{T((b_{1}-m_{Q}(b_{1}))f_{1}\chi_{\frac{6}{5}Q_{0}},(b_{2}-m_{Q}(b_{2}))f_{2}\chi_{6^{N}B_{0}\backslash\frac{6}{5}Q_{0}})\}|\\
&+|m_{Q}\{T((b_{1}-m_{Q}(b_{1}))f_{1}\chi_{6^{N}\tilde{B}_{0}\backslash\frac{6}{5}Q_{0}},(b_{2}-m_{Q}(b_{2}))f_{2}\chi_{\frac{6}{5}Q_{0}})\}|\\
&+|m_{Q}\{T((b_{1}-m_{Q}(b_{1}))f_{1}\chi_{6^{N}\tilde{B}_{0}\backslash\frac{6}{5}Q_{0}},(b_{2}-m_{Q}(b_{2}))f_{2}\chi_{6^{N}\tilde{B}_{0}\backslash\frac{6}{5}Q_{0}})\}|\\
=:&\sum_{i=1}^{9}H_{2i}.
\end{split}
\end{equation*}

Similar to estimate $F_{2i},1\leq i\leq 9$, we obtain that

\begin{equation*}
\begin{split}
H_{21}+H_{22}+H_{27}+H_{28}&\leq CK^{3}_{B,Q}||b_{1}||_{\ast}||b_{2}||_{\ast}M_{p_{1},(5)}f_{1}(x)M_{p_{2},(5)}f_{2}(x),\\
H_{23}+H_{29}&\leq CK_{B,Q}^{4}||b_{1}||_{\ast}||b_{2}||_{\ast}M_{p_{1},(5)}f_{1}(x)M_{p_{2},(5)}f_{2}(x),\\
H_{24}+H_{25}+H_{26}&\leq C||b_{1}||_{\ast}||b_{2}||_{\ast}M_{p_{1},(5)}f_{1}(x)M_{p_{2},(5)}f_{2}(x).\\
\end{split}
\end{equation*}

For simplicity, we only computer $H_{21}$. We write
\begin{equation*}
\begin{split}
&|T((b_{1}-m_{Q}(b_{1}))f_{1}\chi_{\frac{6}{5}\tilde{B}_{0}},(b_{2}-m_{Q}(b_{2}))f_{2}\chi_{6^{N}\tilde{B}_{0}\backslash\frac{6}{5}\tilde{B}_{0}})(z)|\\
\leq& C\int_{6^{N}\tilde{B}_{0}\backslash\frac{6}{5}\tilde{B}_{0}}\int_{\frac{6}{5}\tilde{B}_{0}}
\frac{|(b_{1}(y_1)-m_{Q}(b_{1}))f_{1}(y_1)(b_{2}(y_2)-m_{Q}(b_{2}))f_{2}(y_2)|}{[\sum\limits_{i=1}^{2}\lambda(z,d(z,y_i))]^{2}}d\mu(y_1)d\mu(y_2)\\
\leq& C\int_{\frac{6}{5}\tilde{B}_{0}}\frac{|(b_{1}(y_1)-m_{Q}(b_{1}))f_{1}(y_1)|}{\lambda(z,d(z,y_1))}d\mu(y_1)
\biggl\{\sum_{k=1}^{N-1}\int_{6^{k+1}\tilde{B}_{0}\backslash6^{k}\tilde{B}_{0}}\frac{|(b_{2}(y_2)-m_{Q}(b_{2}))f_{2}(y_2)|}{\lambda(z,d(z,y_2))}d\mu(y_2)\\
&\ \ +\int_{6\tilde{B}_{0}\backslash\frac{6}{5}\tilde{B}_{0}}\frac{|(b_{2}(y_2)-m_{Q}(b_{2}))f_{2}(y_2)|}{\lambda(z,d(z,y_2))}d\mu(y_2)\biggr\}\\
\leq& C\frac{\mu(6\tilde{B}_{0})}{\lambda(x_{\tilde{B}_{0}},\frac{6}{5}r_{\tilde{B}_{0}})}\frac{1}{\mu(6\tilde{B}_{0})}
\int_{\frac{6}{5}\tilde{B}_{0}}|(b_{1}(y_1)-m_{Q}(b_{1}))f_{1}(y_1)|d\mu(y_1)\\
&\ \ \times\biggl\{ \sum_{k=1}^{N-1}\frac{\mu(5\times6^{k+1}\tilde{B}_{0})}{\lambda(x_{\tilde{B}_{0}},6^{k}r_{\tilde{B}_{0}})}
\frac{1}{\mu(5\times6^{k+1}\tilde{B}_{0})}\int_{6^{k+1}\tilde{B}_{0}}|(b_{2}(y_2)-m_{Q}(b_{2}))f_{2}(y_2)|d\mu(y_2)\\
&\ \ + \frac{\mu(5\times6\tilde{B}_{0})}{\lambda(x_{\tilde{B}_{0}},6r_{\tilde{B}_{0}})}
\frac{1}{\mu(5\times6\tilde{B}_{0})}\int_{6\tilde{B}_{0}}|(b_{2}(y_2)-m_{Q}(b_{2}))f_{2}(y_2)|d\mu(y_2)\biggr\}\\
\leq&CK^{2}_{\tilde{B}_{0},Q}K_{\tilde{B}_{0},Q_{0}}||b_{1}||_{\ast}||b_{2}||_{\ast}M_{p_{1},(5)}f_{1}(x)M_{p_{2},(5)}f_{2}(x)\\
\leq&CK^{3}_{B,Q}||b_{1}||_{\ast}||b_{2}||_{\ast}M_{p_{1},(5)}f_{1}(x)M_{p_{2},(5)}f_{2}(x),
\end{split}
\end{equation*}
here we have used the conclusions $K_{\tilde{B}_{0},Q}\leq CK_{B,Q}$ and $K_{\tilde{B}_{0},Q_{0}}\leq CK_{B,Q}$, which can be obtained by Lemma 2.4.

Taking the mean over $z\in \tilde{B}$, we obtain
\begin{equation*}
H_{21}\leq CK^{3}_{B,Q}||b_{1}||_{\ast}||b_{2}||_{\ast}M_{p_{1},(5)}f_{1}(x)M_{p_{2},(5)}f_{2}(x).
\end{equation*}

Combining the estimate for $H_{2i},1\leq i\leq 9$, then
\begin{equation*}
H_{2}\leq
CK_{B,Q}^{4}||b_{1}||_{\ast}||b_{2}||_{\ast}M_{p_{1},(5)}f_{1}(x)M_{p_{2},(5)}f_{2}(x).
\end{equation*}

Next we consider $|h_{B}-h_{\tilde{B}}|$. With the similar method to estimate $H_{2}$, we easily obtain that
\begin{equation*}
|h_{B}-h_{\tilde{B}}|\leq
C||b_{1}||_{\ast}||b_{2}||_{\ast}M_{p_{1},(5)}f_{1}(x)M_{p_{2},(5)}f_{2}(x).
\end{equation*}

Thus (\ref{2.20}) holds and hence (\ref{2.14}) is proved.
With the same method to prove (\ref{2.14}), we can obtain that (\ref{2.15}) and (\ref{2.16}) also hold.
Here we omit the details. Thus Lemma \ref{lem2.5} has been proved.
\end{proof}

\begin{proof}[Proof of Theorem \ref{main-thm}]
 Let $0<\delta<1/2$, $1<p_{1},p_{2},q<\infty$,
$\dfrac{1}{q}=\dfrac{1}{p_{1}}+\dfrac{1}{p_{2}}$, $1<r<q$,
 $f_{1}\in
L^{p_{1}}(\mu)$, $f_{2}\in L^{p_{2}}(\mu)$, $b_{1}\in \text{RBMO}(\mu)$ and
$b_{2}\in \text{RBMO}(\mu)$.

When $l(B)\geq 1$, by $|f(x)|\leq
N_{\delta}f(x)$, Lemma \ref{lem2.1}, Lemma \ref{lem2.4}, Lemma \ref{lem2.5}, H\"{o}lder's inequality
and the boundedness of $M_{(\rho)}$ and $M_{r,(\rho)}$ for $\rho\geq
5$ and $q>r$, it follows
\begin{equation*}
\begin{split}
&||[b_{1},b_{2},T](f_{1},f_{2})||_{L^{q}(\mu)} \leq
||N_{\delta}([b_{1},b_{2},T](f_{1},f_{2}))||_{L^{q}(\mu)}
\leq C||M^{\sharp}_{\delta}([b_{1},b_{2},T](f_{1},f_{2}))||_{L^{q}(\mu)}\\
\leq &C||b_{1}||_{\ast}||b_{2}||_{\ast}||M_{r,(6)}(T(f_{1},f_{2}))||_{L^{q}(\mu)}
+C||b_{1}||_{\ast}||M_{r,(6)}([b_{2},T](f_{1},f_{2}))||_{L^{q}(\mu)}\\
&+C||b_{2}||_{\ast}||M_{r,(6)}([b_{1},T](f_{1},f_{2}))||_{L^{q}(\mu)}
+C||b_{1}||_{\ast}||b_{2}||_{\ast}||M_{p_{1},(5)}f_{1}M_{p_{2},(5)}f_{2}||_{L^{q}(\mu)}\\
&+C||b_{1}||_{\ast}||b_{2}||_{\ast}||M_{r,(6)}(T(f_{1}\chi_{\frac{6}{5}\tilde{B}},f_{2}\chi_{\frac{6}{5}\tilde{B}}))||_{L^{q}(\mu)}\\
&+C||b_{1}||_{\ast}||M_{r,(6)}([b_{2},T](f_{1}\chi_{\frac{6}{5}\tilde{B}},f_{2}\chi_{\frac{6}{5}\tilde{B}}))||_{L^{q}(\mu)}\\
&+C||b_{2}||_{\ast}||M_{r,(6)}([b_{1},T](f_{1}\chi_{\frac{6}{5}\tilde{B}},f_{2}\chi_{\frac{6}{5}\tilde{B}}))||_{L^{q}(\mu)}\\
\leq
&C||b_{1}||_{\ast}||b_{2}||_{\ast}||f_{1}||_{L^{p_{1}}(\mu)}||f_{2}||_{L^{p_{2}}(\mu)}
 +C||b_{1}||_{\ast}||([b_{2},T](f_{1},f_{2}))||_{L^{q}(\mu)}\\
&+C||b_{2}||_{\ast}||([b_{1},T](f_{1},f_{2}))||_{L^{q}(\mu)}\\
\leq
&C||b_{1}||_{\ast}||b_{2}||_{\ast}||f_{1}||_{L^{p_{1}}(\mu)}||f_{2}||_{L^{p_{2}}(\mu)}
 +C||b_{1}||_{\ast}||M^{\sharp}_{\delta}([b_{2},T](f_{1},f_{2}))||_{L^{q}(\mu)}\\
&+C||b_{2}||_{\ast}||M^{\sharp}_{\delta}([b_{1},T](f_{1},f_{2}))||_{L^{q}(\mu)}\\
\leq &C||b_{1}||_{\ast}||b_{2}||_{\ast}||f_{1}||_{L^{p_{1}}(\mu)}||f_{2}||_{L^{p_{2}}(\mu)}
+C||b_{1}||_{\ast}||M_{r,(6)}(T(f_{1},f_{2}))||_{L^{q}(\mu)}\\
&+C||b_{1}||_{\ast}||M_{p_{1},(5)}f_{1}M_{p_{2},(5)}f_{2}||_{L^{q}(\mu)}
+C||b_{2}||_{\ast}||M_{r,(6)}(T(f_{1},f_{2}))||_{L^{q}(\mu)}\\
&+C||b_{2}||_{\ast}||M_{p_{1},(5)}f_{1}M_{p_{2},(5)}f_{2}||_{L^{q}(\mu)}\\
\leq&C||b_{1}||_{\ast}||b_{2}||_{\ast}||f_{1}||_{L^{p_{1}}(\mu)}||f_{2}||_{L^{p_{2}}(\mu)}
+C||M_{p_{1},(5)}f_{1}||_{L^{p_{1}}(\mu)}||M_{p_{2},(5)}f_{2}||_{L^{p_{2}}(\mu)}\\
\leq&C||b_{1}||_{\ast}||b_{2}||_{\ast}||f_{1}||_{L^{p_{1}}(\mu)}||f_{2}||_{L^{p_{2}}(\mu)}.
\end{split}
\end{equation*}

When $0<l(B)<1$, using the above method, we also obtain the results of Theorem \ref{main-thm}. Thus the proof of Theorem \ref{main-thm} has been completed.
\end{proof}


%

{\bf Acknowledgements.} The authors would like to thank the referees for their very carefully reading and many valuable suggestions. The work was supported by Natural Science Foundation of China (No.11971026),  Natural Science Foundation of Education
Committee of Anhui Province (Nos.KJ2016A506, KJ2017A454) and Excellent Young Talents
Foundation of Anhui Province (Nos.GXYQ2017070, GXYQ2020049).

\end{document}